 \tikzset{>=stealth',
        cvertex/.style={circle,draw=black,inner sep=1pt,outer sep=3pt},
        vertex/.style={circle,fill=black,inner sep=1pt,outer sep=3pt},
        star/.style={circle,fill=yellow,inner sep=0.75pt,outer sep=0.75pt},
        tvertex/.style={inner sep=1pt,font=\scriptsize},
        gap/.style={inner sep=0.5pt,fill=white}}
 \numberwithin{equation}{section}
\newtheorem*{pproposition}{Proposition}
\newtheorem{theorem}{Theorem}[section]
\newtheorem{proposition}[theorem]{Proposition}
\newtheorem{corollary}[theorem]{Corollary}
\newtheorem{conjecture}[theorem]{Conjecture}
\newtheorem{assumption}[theorem]{Assumption}
\theoremstyle{definition}
\newtheorem{definition}[theorem]{Definition}
\newtheorem{example}[theorem]{Example}
\theoremstyle{remark}
\newtheorem{remark}[theorem]{Remark}
\newcommand{\op}{\textnormal{op}}
\newcommand{\modcat}{\textnormal{mod}}
\newcommand{\E}{\mathcal{E}}
\DeclareMathOperator{\coh}{coh}
\DeclareMathOperator{\End}{End} 
\DeclareMathOperator{\Ext}{Ext}
\DeclareMathOperator{\GL}{GL} 
\DeclareMathOperator{\Mat}{Mat} 
\DeclareMathOperator{\Hom}{Hom}
\DeclareMathOperator{\Spec}{Spec}
\DeclareMathOperator{\Per}{Per}
\DeclareMathOperator{\SL}{SL}
\DeclareMathOperator{\rk}{rk}
\begin{document}

\title[Threefold flops via noncommutative algebras]{The length classification of threefold flops via noncommutative algebras}
\author{Joseph Karmazyn}

\address{School of Mathematics and Statistics,
University of Sheffield,
Hicks Building,
Hounsfield Road,
Sheffield,
S3 7RH.}
\email{j.h.karmazyn@sheffield.ac.uk}
\urladdr{http://www.jhkarmazyn.staff.shef.ac.uk/}
\date{\today}

\subjclass[2010]{ 	14B07 , 14E30 , 16S38 ,	18E30.}

\begin{abstract}
Smooth threefold flops with irreducible centres are classified by the length invariant, which takes values 1, 2, 3, 4, 5 or 6. This classification by Katz and Morrison identifies 6 possible partial resolutions of Kleinian singularities that can occur as generic hyperplane sections, and the simultaneous resolutions associated to such a partial resolution produce the \emph{universal flop of length $l$}.

In this paper we translate these ideas into noncommutative algebra. We introduce the \emph{universal flopping algebra of length $l$} from which the universal flop of length $l$ can be recovered by a moduli construction, and we present each of these algebras as the path algebra of a quiver with relations. This explicit realisation can then be used to construct examples of NCCRs associated threefold flops of any length as quiver with relations defined by superpotentials, to recover the matrix factorisation description of the universal flop conjectured by Curto and Morrison, and to realise examples of contraction algebras.
\end{abstract}

\maketitle

\tableofcontents

\section{Introduction}
A classification of simple threefold flops was produced by Katz and Morrison \cite{KatzMorrison} using the length invariant introduced by Kollar \cite[Page 95-96]{Length}. Under this classification the length invariant takes one of 6 possible values and is uniquely determined by the generic hyperplane section, which is a partial resolution of a Kleinian singularity. This length classification result is accompanied by a construction producing a \emph{universal flop of length $l$} such that, locally, any length $l$ flop can theoretically be constructed via a classifying map to the universal flop of length $l$.

Whilst this classification result provides 6 classes of simple threefold flops and the corresponding construction produces associated universal objects, it seems to be difficult to directly use this geometric description of universal flops to explicitly construct examples of simple threefold flops of all lengths.

As a solution, in this paper we investigate how the geometric classification and construction can be realised instead in the language of noncommutative algebra. The advantage of this translation is that it encodes the universal flop of length $l$ in an explicit and easy to present noncommutative algebra that we introduce: the \emph{universal flopping algebra of length $l$}, from which the geometric universal flop of length $l$ can be recovered by a moduli space construction. Having realised such an explicit presentation, an immediate application is the construction of specific examples of flops of all lengths, see Section \ref{Sec: Examples}.

This program of understanding universal flops via noncommutative algebra is also inspired by a conjectural program of Curto and Morrison that aimed to realise universal flops via matrix factorisations \cite{CurtoMorrison}. Using the universal flopping algebras, we explicitly recover Curto and Morrison's construction of the universal flop of length 2 as a matrix factorisation in Section \ref{Example: Length 2} and later in the introduction we outline how the matrix factorisations conjectured by Curto and Morrison can be found for all lengths.

In addition, there has recently been much work directed towards understanding threefold flops via homological techniques and noncommutative algebra, often building on earlier work of Bridgeland \cite{BridgelandFlops} and Van den Bergh \cite{3Dflops}.  See for example the work of Wemyss \cite{WemyssHMMP}, Brown and Wemyss \cite{GKVWemyssBrown}, Donovan and Wemyss \cite{WemyssDonovan}, Nolla de Celis and Sekiya \cite{CelisSekiya}, and Bodzenta and Bondal \cite{FlopsBodzentaBondal}.

One outcome of the work of Donovan and Wemyss \cite{WemyssDonovan} is the introduction of a new invariant of threefold flops: the contraction algebra. This is is an finite dimensional algebra, and it was shown by Hua that the $A_{\infty}$ version is a complete invariant of simple threefold flops up to analytic equivalence \cite{ZhengHua}.  An ongoing project of Brown and Wemyss aims to realise a full analytic classification of threefold flops via contraction algebras.

It is straightforward to recover the contraction algebras from examples produced from the universal flopping algebras, and so the results in this paper the program of realising contraction algebras by providing a tool to produce examples.

We now briefly outline the structure and main results of the paper, and we recap several related results.

\subsection*{Simple threefold flops} This paper considers the classification of simple threefold flops. A simple threefold flop is a flop of smooth threefolds with centre an irreducible curve. This curve is necessarily smooth and rational, so is isomorphic to $\mathbb{P}^1$. 

\[
\begin{tikzpicture}   
\draw[red, thick] (-2.6,-0.1) to [bend left=25] node[pos=0.48, right] {} (-1.4,0.1);
\draw[black] (-2,0) ellipse (1.1 and 0.7);
\draw[->] (-1.5,-0.9) -- (-0.5,-1.5);
\draw[red, thick] (1.4,0.1) to [bend left=25] node[pos=0.48, right] {} (2.6,-0.1);
\draw[black] (2,0) ellipse (1.1 and 0.7);
\draw[->] (1.5,-0.9) -- (0.5,-1.5);
\draw[black] (0,-2) ellipse (0.9 and 0.4);
\filldraw [black] (0,-2) circle (1pt);
\end{tikzpicture}
\]

Such a flop is determined by a small resolution of a threefold Gorenstein singularity that has an irreducible exceptional set; i.e. one side of the flop.  We refer to such a small resolution as a simple threefold flopping contraction. Locally, classifying simple flops is equivalent to classifying simple threefold flopping contractions as either side of a flop can be recovered from the other, see \cite{KollarFlops}.

In order to classify simple threefold flopping contractions, it is natural to associate invariants to the geometry. Various different invariants have been introduced to help classify simple threefold flopping contractions, and an obvious invariant is the normal bundle of the exceptional curve. The only normal bundles that can occur are type $(-1,-1)$, $(-2,0)$, or $(-3,1)$, so this invariant splits flops into three separate classes.

There are well known examples of threefold flops in all these classes: the Atiyah flop has normal bundle $(-1,-1)$, Reid's pagoda flops have normal bundle $(-2,0)$ \cite[Section 5]{ReidGeneral}, and a variety of examples have been constructed with normal bundle $(-3,1)$, see \cite{LauferFlop, PinkhamConstruction, ReidGeneral, MorrisonFlops, Curto, Ando}. Indeed, the Atiyah flop and Reid's pagoda flops locally classify all $(-1,-1)$ and $(-2,0)$ flops, and all other flops are in class $(-3,1)$. As such, while the normal bundle is a natural invariant it is rather coarse.

Another invariant is the length invariant introduced by Kollar \cite[Page 95-96]{Length}. For a small resolution $\pi: Y \rightarrow X= \Spec \, R$ contracting the irreducible curve $C$ to the point $p$, the length invariant is defined to be the generic rank of the cokernel of $\pi^* \mathcal{O}_p \rightarrow \mathcal{O}_Y$ on $C$. It was shown by Katz and Morrison \cite{KatzMorrison} (with an alternative, shorter proof later given by Kawamata \cite{KawamataHyperplane}) that any simple flopping contraction has length 1, 2, 3, 4, 5 or 6. 

\begin{theorem}[{\cite{KatzMorrison}}] \label{Theorem: Length Classification}(Length Classification)
Any simple threefold flopping contraction has length $1,2,3,4,5$ or $6$. The length is uniquely determined by the generic hyperplane section, and this hyperplane section is a partial resolution of a Kleinian surface singularity $\pi_{\Gamma_C}:Y_{\Gamma_C} \rightarrow X_{\Gamma}$ of type $\Gamma_{C}$ occurring in the following list:
\[
\resizebox{350pt}{!}{ 
\begin{tikzpicture}[node distance=1cm, main node/.style={circle,fill=black!100,draw,font=\sffamily\Large\bfseries}]
\node (1Lab) at (0,0.9) {$1$};
  \node[circle,fill=black!100,draw] (11) at (0,0) {};
  
  \node (2Lab) at (0.9,0.9) {$2$};
    \node[main node] (21) at (2,0) {};
  \node[circle,draw] (22) at (1,0) {};
  \node[circle,draw] (23) at (3,0) {};
  \node[circle,draw] (24)  at (2,1) {};
 \draw [ultra thick] (21) to node {} (22);
 \draw [ultra thick] (21) to node {} (23);
 \draw [ultra thick] (21) to node {} (24);
 
\node (3Lab) at (4,0.9) {$3$};
  \node[main node] (31) at (6,0) {};
    \node[circle,draw] (32') at (4,0) {};
  \node[circle,draw] (32) at (5,0) {};
  \node[circle,draw] (33) at (7,0) {};
    \node[circle,draw] (33') at (8,0) {};
  \node[circle,draw] (34)  at (6,1) {};
 \draw [ultra thick] (31) to node {} (32);
 \draw [ultra thick] (31) to node {} (33);
 \draw [ultra thick] (31) to node {} (34);
  \draw [ultra thick] (32) to node {} (32');
   \draw [ultra thick] (33) to node {} (33');
 
 \node (4Lab) at (9,0.9) {$4$};
     \node[main node] (41) at (12,0) {};
      \node[circle,draw] (42'') at (9,0) {};
    \node[circle,draw] (42') at (10,0) {};
  \node[circle,draw] (42) at (11,0) {};
  \node[circle,draw] (43) at (13,0) {};
    \node[circle,draw] (43') at (14,0) {};
  \node[circle,draw] (44)  at (12,1) {};
 \draw [ultra thick] (41) to node {} (42);
 \draw [ultra thick] (41) to node {} (43);
 \draw [ultra thick] (41) to node {} (44);
  \draw [ultra thick] (42) to node {} (42');
    \draw [ultra thick] (42') to node {} (42'');
   \draw [ultra thick] (43) to node {} (43');
   
    \node (5Lab) at (0,-1.1) {$5$};
   \node[circle,draw] (51) at (4,-2) {};
        \node[circle,draw] (52''') at (0,-2) {};
      \node[circle,draw] (52'') at (1,-2) {};
    \node[circle,draw] (52') at (2,-2) {};
  \node[main node] (52) at (3,-2) {};
  \node[circle,draw] (53) at (5,-2) {};
    \node[circle,draw] (53') at (6,-2) {};
  \node[circle,draw] (54)  at (4,-1) {};
 \draw [ultra thick] (51) to node {} (52);
 \draw [ultra thick] (51) to node {} (53);
 \draw [ultra thick] (51) to node {} (54);
  \draw [ultra thick] (52) to node {} (52');
    \draw [ultra thick] (52') to node {} (52'');
        \draw [ultra thick] (52'') to node {} (52''');
   \draw [ultra thick] (53) to node {} (53');
   
          \node (6Lab) at (8,-1.1) {$6$};
               \node[main node] (61) at (12,-2) {};
        \node[circle,draw] (62''') at (8,-2) {};
      \node[circle,draw] (62'') at (9,-2) {};
    \node[circle,draw] (62') at (10,-2) {};
  \node[circle,draw] (62) at (11,-2) {};
  \node[circle,draw] (63) at (13,-2) {};
    \node[circle,draw] (63') at (14,-2) {};
  \node[circle,draw] (64)  at (12,-1) {};
   \draw [ultra thick] (61) to node {} (62);
 \draw [ultra thick] (61) to node {} (63);
 \draw [ultra thick] (61) to node {} (64);
  \draw [ultra thick] (62) to node {} (62');
    \draw [ultra thick] (62') to node {} (62'');
        \draw [ultra thick] (62'') to node {} (62''');
   \draw [ultra thick] (63) to node {} (63');

\end{tikzpicture}
}
\]
(notation for hyperplane sections in terms of coloured Dynkin diagrams $\Gamma_C$ is recalled in Section \ref{Subsection: Kleinian Singularities and deformations}).
\end{theorem}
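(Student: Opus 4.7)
The plan is to follow the strategy of Katz and Morrison via generic hyperplane sections. Let $\pi\colon Y\to X=\Spec R$ be a simple threefold flopping contraction with exceptional curve $C\cong\PP^1$ contracted to $p$. The entire argument reduces the classification of $\pi$ to a combinatorial classification of partial resolutions of Kleinian surface singularities.

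\textbf{Step 1 (hyperplane section is Kleinian).} First I would choose a generic hyperplane $H$ through $p$ and set $X_0:=X\cap H$ and $Y_0:=\pi^{-1}(H)$. Since $X$ is Gorenstein terminal in dimension three, adjunction shows that $X_0$ has only canonical surface singularities at $p$, so $(X_0,p)$ is analytically isomorphic to a Kleinian singularity $\CC^2/\Gamma$ for some finite $\Gamma\subset\SL_2(\CC)$ of ADE type. A Bertini-type argument ensures $Y_0$ is smooth for generic $H$.

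\textbf{Step 2 (partial resolution with irreducible exceptional curve).} The induced $\pi_0\colon Y_0\to X_0$ is a projective birational morphism from a smooth surface whose exceptional set contains $C$. Since $\pi$ is small with unique exceptional curve $C$, the exceptional locus of $\pi_0$ coincides with $C$. Hence $\pi_0$ is a partial resolution of a Kleinian singularity with irreducible exceptional curve. By Artin's theory for rational surface singularities, such partial resolutions are classified by picking a single vertex $v_C$ of the Dynkin diagram of $\Gamma$: the partial resolution $Y_0$ is obtained from the minimal resolution $\widetilde{Y_0}$ by contracting every exceptional $(-2)$-curve except the one labelled $v_C$. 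The coloured Dynkin datum $\Gamma_C$ is precisely this choice.

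\textbf{Step 3 (length equals a fundamental cycle coefficient).} The length $\ell=\rank(\cok(\pi^{*}\cO_p\to\cO_Y)|_C)$ is computed after restriction to $Y_0$, since both source and target are supported on the fibre $\pi^{-1}(p)$. Pulling back further along $\widetilde{Y_0}\to Y_0\to X_0$, the scheme-theoretic fibre of the minimal resolution over $p$ is the fundamental cycle $Z=\sum_v m_v E_v$ of $X_0$. Tracking multiplicities through the intermediate contraction $\widetilde{Y_0}\to Y_0$ shows that the generic rank of the cokernel on $C=E_{v_C}$ is exactly the coefficient $m_{v_C}$; this structural identity is the heart of the argument.

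\textbf{Step 4 (enumeration).} Finally I would enumerate the pairs $(\Gamma,v_C)$ up to diagram automorphism and read off the coefficients $m_{v_C}$ from the well-known fundamental cycles of ADE singularities. Every vertex of $A_n$ gives coefficient $1$; $D_n$ gives coefficients $1$ or $2$; $E_6$ gives at most $3$; $E_7$ gives at most $4$; and $E_8$ realises each of $2,3,4,5,6$. Selecting, for each value of $\ell\in\{1,\dots,6\}$, the minimal marked diagram $(\Gamma,v_C)$ realising that coefficient reproduces exactly the six coloured diagrams drawn in the statement, and simultaneously shows $\ell$ is determined by the hyperplane section datum $\Gamma_C$.

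The principal obstacle is Step 3: reconciling the global definition of $\ell$ on the threefold $Y$ with the fundamental cycle coefficient on the minimal resolution of the surface $Y_0$. This requires a careful flat base change for $H$ generic together with an explicit local computation of the pullback of $\mathfrak m_p$ to $\widetilde{Y_0}$, where the fundamental cycle appears naturally. Step 1 is standard once the terminal singularity theory is invoked, and Step 4 reduces to a short combinatorial check against tabulated ADE data.
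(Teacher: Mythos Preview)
The paper does not prove this theorem; it is quoted from \cite{KatzMorrison} (with an alternative proof attributed to \cite{KawamataHyperplane}) and used as background. So there is no in-paper proof to compare against, and your sketch is essentially a reconstruction of the Katz--Morrison argument. The overall architecture (generic hyperplane section, identification of length with a fundamental-cycle coefficient, then enumeration) is the right one, but two steps need attention.

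In Step~2 your Bertini claim is not justified. Hyperplanes through $p$ pull back to a linear system on $Y$ whose base locus contains the exceptional curve $C$, so Bertini only gives smoothness of $Y_0$ away from $C$. That $Y_0$ is normal with at worst rational double points along $C$, and hence a partial resolution dominated by the minimal resolution, is precisely the nontrivial input supplied by Reid's general elephant and Kawamata's argument; it is not a consequence of generic smoothness.

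Step~4 contains a genuine gap. You correctly observe that many pairs $(\Gamma,v_C)$ realise each coefficient $\ell$: for instance every vertex of every $A_n$ has coefficient $1$, and the interior vertices of every $D_n$ have coefficient $2$. Your phrase ``selecting, for each value of $\ell$, the minimal marked diagram'' is not a proof that only the minimal one occurs --- and indeed it does not: the generic hyperplane section of a length-$1$ pagoda flop of width $k$ is an $A_{2k-1}$ partial resolution, not $A_1$. What your Steps~1--3 actually establish is that the hyperplane section is \emph{some} single-vertex partial resolution $\Gamma_C$ and that $\ell$ equals the coefficient of the marked vertex, hence $\ell\in\{1,\dots,6\}$. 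The six diagrams listed in the statement are the specific ones used to build the universal flops (Theorem~\ref{Theorem:Universal Flop}); the assertion that every length-$\ell$ flop is governed by that particular diagram is the content of the classifying-map construction, not of the enumeration you describe. You should separate these two claims rather than conflate them in Step~4.
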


The length invariant is a finer invariant than the normal bundle, providing 6 classes of flopping contractions: the length invariant is 1 if and only if the normal bundle is $(-1,-1)$ or $(-2,0)$ and the $(-3,1)$ flops are split into lengths 2, 3, 4, 5, and 6.

Moreover, the examples with normal bundle $(-3,1)$ appearing in \cite{LauferFlop, PinkhamConstruction, ReidGeneral, MorrisonFlops} have length 2, the examples appearing in \cite{Curto, Ando} have length 3, and it seems to be difficult to find explicit examples in the literature of flops of length $>3$. 

As well as being a finer invariant, a further advantage of the length classification is that it has an accompanying construction result. Associated to a partial resolution of a Kleinian singularity that occurs in the length classification are two simultaneous resolutions as recalled  Section \ref{Subsection: Kleinian Singularities and deformations}:

\begin{align*}
\begin{tikzpicture} [bend angle=0]
\node (Y) at (-1.25,1.5)  {$\mathcal{Y}_l$};
\node (Y+) at (1.25,1.5)  {$\mathcal{Y}_l^{+}$};
\node (X) at (0,0)  {$\mathcal{X}_l$};
\draw [->] (Y) to node[below left, pos=0.4]  {$\pi_l$} (X);
\draw [->] (Y+) to node[ below right, pos=0.3]  {$\pi_l^{+}$} (X);
\end{tikzpicture}
\end{align*}

We call this pair of simultaneous resolutions the \emph{universal flop of length $l$}. These are flat families of partial resolutions of Kleinian singularities over an affine base $\mathfrak{h}_l$, and the name is justified as it is a consequence of the classification results of \cite{KatzMorrison}, following the philosophy of Pinkham \cite{PinkhamConstruction}, that every simple threefold flop of length $l$
\begin{align*}
\begin{tikzpicture} [bend angle=0]
\node (Y) at (-1,1.25)  {$Y$};
\node (Y+) at (1,1.25)  {$Y^{+}$};
\node (X) at (0,0)  {$X$};
\draw [->] (Y) to node[below left, pos=0.4]  {$\pi$} (X);
\draw [->] (Y+) to node[ below right, pos=0.3]  {$\pi^{+}$} (X);
\end{tikzpicture}
\end{align*} has a flat morphism $\theta: Y \rightarrow \Delta$ to the unit disk $\Delta$ after shrinking $Y$ and a \emph{classifying map} $\nu: \Delta \rightarrow \mathfrak{h}_l$ 
such that the pullback via $\nu$ of $\mathcal{X}_l$ coincides with X, the pullback via $\nu$ of $\pi_l:\mathcal{Y}_l\rightarrow \mathcal{X}_l$ coincides with $\pi:Y\rightarrow X$, and the pullback via $\nu$ of $\pi^+_l:\mathcal{Y}_l^+\rightarrow \mathcal{X}_l$ coincides with $\pi^+:Y^+ \rightarrow X$.

This construction shows that the pullback of the morphism $\pi_l$ by a generic map from the formal disc to the base of the universal flop of length $l$ constructs a flopping contraction of length $l$ such that the other side of the flop can be recovered from the morphism $\pi_l^+$. By considering such classifying morphisms Katz and Morrison were able to show flopping contractions of all possible lengths exist.
\begin{theorem}[\cite{KatzMorrison}] \label{Theorem:Universal Flop} (Length Construction) Simple flopping contractions of lengths 1, 2, 3, 4, 5 and 6 all exist. Locally, any simple threefold flop of length $l$ can be be constructed as the pullback by a classifying morphism to the universal flop of length $l$. 
\end{theorem}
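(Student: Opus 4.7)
The plan is to prove the two halves of the statement separately, following Pinkham's philosophy of treating a threefold flopping contraction as a one-parameter deformation of a surface partial resolution.

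\emph{Universal property.} Given a simple flopping contraction $\pi\colon Y \rightarrow X$ of length $l$, first shrink $Y$ analytically around the exceptional curve and choose a generic function $f$ on $X$ vanishing at the singular point. This produces a flat morphism $\theta\colon Y \rightarrow \Delta$ to the unit disc whose zero fibre is, by Theorem \ref{Theorem: Length Classification}, the partial resolution $\pi_{\Gamma_C}\colon Y_{\Gamma_C} \rightarrow X_{\Gamma}$ of the Kleinian singularity of the type $\Gamma_C$ associated to $l$. Thus $\theta$ realises $\pi$ as a one-parameter family of deformations of this partial resolution. The Brieskorn–Slodowy simultaneous resolution theorem, together with its refinement to partial resolutions, identifies $\pi_l\colon \mathcal{Y}_l \rightarrow \mathcal{X}_l$ over $\mathfrak{h}_l$ as the versal such family, producing a classifying morphism $\nu\colon \Delta \rightarrow \mathfrak{h}_l$ with $\pi = \nu^*\pi_l$ and $X = \nu^*\mathcal{X}_l$. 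The opposite side $\pi^+$ of the flop is uniquely determined by $\pi$, so likewise equals $\nu^*\pi_l^+$.

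\emph{Existence.} For each $l \in \{1,\dots,6\}$ it suffices to exhibit a single classifying map $\nu\colon \Delta \rightarrow \mathfrak{h}_l$ whose pullback is a simple flopping contraction of length $l$. A generic one-parameter arc through the origin of $\mathfrak{h}_l$, chosen to avoid the discriminant loci over which $\mathcal{Y}_l$ fails to be smooth, does the job: the total space $Y = \nu^*\mathcal{Y}_l$ is then smooth, the morphism $\pi = \nu^*\pi_l$ is small with exceptional set an irreducible $\mathbb{P}^1$ (the deformation of the unique curve contracted by $\pi_{\Gamma_C}$ in the coloured Dynkin data), and $\pi^+ = \nu^*\pi_l^+$ supplies the flop. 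The length invariant can be read off from the central fibre, confirming the length equals exactly $l$.

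The main obstacle is establishing the versality input used above: namely that $\pi_l\colon \mathcal{Y}_l \rightarrow \mathcal{X}_l$ is genuinely versal as a deformation of partial resolutions of Kleinian singularities of type $\Gamma_C$. This requires upgrading the classical Brieskorn–Slodowy simultaneous resolution theory to the partial-resolution setting, where the coloured Dynkin diagram $\Gamma_C$ records precisely which subset of $(-2)$-curves are to remain contracted. Once this deformation-theoretic input is in hand, both assertions reduce to checking genericity and smoothness along a chosen classifying arc; the classification in Theorem \ref{Theorem: Length Classification} then guarantees that this covers every simple threefold flop locally.
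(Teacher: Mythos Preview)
Your proposal is correct and follows essentially the same approach that the paper describes. Note, however, that the paper does not actually give its own proof of this statement: Theorem~\ref{Theorem:Universal Flop} is cited as a result of Katz--Morrison, and the surrounding text (together with Section~\ref{Subsection:Deformations of Kleinian singularities and resolutions}) only recalls the argument informally --- precisely along the lines you outline, namely Pinkham's philosophy of viewing the flop as a one-parameter deformation of the partial resolution $Y_{\Gamma_C}$, invoking the Brieskorn--Tjurina--Slodowy versal deformation theory to obtain the classifying map $\nu\colon\Delta\to\mathfrak{h}_l$, and establishing existence by pulling back along a generic arc. Your identification of the key input (versality of $\mathcal{Y}_l\to\mathfrak{h}_l$ for partial resolutions) matches the paper's reliance on the deformation-theoretic facts recapped in Section~\ref{Subsection:Deformations of Kleinian singularities and resolutions}.
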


\subsection*{Universal flops and noncommutative algebras}  Whilst this classification and construction provides six classes of flops with associated universal objects, it appears to be difficult to construct explicit examples in each of these classes using just the given algebraic-geometric construction. In this paper we introduce the \emph{universal flopping algebra of length $l$}: $\mathcal{A}_l$. This is a derived equivalent noncommutative algebra with a chosen idempotent $e_0 \in \mathcal{A}_l$ from which the universal flop can be recovered by a moduli construction.

The fact that such an algebra exists largely follows from Van den Bergh's more general construction in \cite{3Dflops}, and we recall the construction of such an algebra, some of its properties, and the associated moduli constructions $\mathcal{M}_{A^{(\op)}_l}$ in Section \ref{Section: Noncommutative algebra construction}.

\begin{pproposition}[Proposition \ref{Prop:Summary}]
Let $e_0 \in \mathcal{A}_l$ denote the universal flopping algebra of length $l$. There is a derived equivalence 
\[
D^b( \coh \, \mathcal{Y}_{l}) \cong D^b(\mathcal{A}_{l}\text{-mod})
\]
Moreover, the universal flop of length $l$ can be recovered via a moduli space construction:
\begin{align*}
\begin{aligned}
\begin{tikzpicture} [bend angle=0]
\node (Y) at (-1.25,1.5)  {$\mathcal{Y}_l$};
\node (Y+) at (1.25,1.5)  {$\mathcal{Y}_l^{+}$};
\node (X) at (0,0)  {$\mathcal{X}_l$};
\draw [->] (Y) to node[below left, pos=0.4]  {$\pi_l$} (X);
\draw [->] (Y+) to node[ below right, pos=0.3]  {$\pi_l^{+}$} (X);
\end{tikzpicture}
\end{aligned}
\cong
\begin{aligned}
\begin{tikzpicture} [bend angle=0]
\node (Y) at (-1.25,1.5)  {$\mathcal{M}_{\mathcal{A}_l}$};
\node (Y+) at (1.25,1.5)  {$\mathcal{M}_{\mathcal{A}^{\op}_l}$};
\node (X) at (0,0)  {$\Spec \, e_0 \mathcal{A}_l e_0$};
\draw [->] (Y) to node[below left, pos=0.4]  {$\pi_l$} (X);
\draw [->] (Y+) to node[ below right, pos=0.3]  {$\pi_l^{+}$} (X);
\end{tikzpicture}
\end{aligned}
\end{align*}
where $\mathcal{R}_l \cong e_0 \mathcal{A}_l e_0 \cong e_0 \mathcal{A}_l^{\op}e_0$, $\Spec \, \mathcal{R}_l \cong \mathcal{X}_l$, and $\mathcal{M}_{\mathcal{A}_l} \cong \mathcal{Y}_l$ and $\mathcal{M}_{\mathcal{A}_l^{\op}} \cong \mathcal{Y}_l^{+}$ as $\mathcal{R}_l$-schemes.
\end{pproposition}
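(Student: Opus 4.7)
The plan is to realise $\mathcal{A}_l$ as the endomorphism algebra of a tilting bundle on $\mathcal{Y}_l$ constructed via Van den Bergh's machinery in \cite{3Dflops}, and then to identify the required moduli spaces using a stability condition adapted to the summand decomposition of that tilting bundle. The statement explicitly concedes that the content ``largely follows'' from \cite{3Dflops}, so the task is essentially to assemble those results in the relative universal setting over the base $\mathfrak{h}_l$.

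First I would produce a tilting bundle $\mathcal{T} = \cO_{\mathcal{Y}_l} \oplus \mathcal{N}$ on $\mathcal{Y}_l$, where $\mathcal{N}$ is built from the line bundles on a fibre corresponding to the unstarred vertices of $\Gamma_C$, glued to the base via the universal-extension construction that kills $R^1\pi_{l,*}$. Setting $\mathcal{A}_l := \End_{\mathcal{Y}_l}(\mathcal{T})$ and letting $e_0 \in \mathcal{A}_l$ be the idempotent projecting onto the $\cO_{\mathcal{Y}_l}$ summand, the tilting property of $\mathcal{T}$ supplies the equivalence
\[
R\Hom_{\mathcal{Y}_l}(\mathcal{T}, -) : D^b(\coh\,\mathcal{Y}_l) \xrightarrow{\sim} D^b(\mathcal{A}_l\text{-mod}).
\]
The identification $e_0 \mathcal{A}_l e_0 = H^0(\mathcal{Y}_l, \cO_{\mathcal{Y}_l}) = \mathcal{R}_l$ is then immediate since $\pi_l$ is proper with connected fibres over the affine base $\mathcal{X}_l$.

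To recover $\mathcal{Y}_l$ as $\mathcal{M}_{\mathcal{A}_l}$ I would take moduli of $\mathcal{A}_l$-modules with dimension vector equal to the ranks of the indecomposable summands of $\mathcal{T}$, with King stability framed at $e_0$. The tilting equivalence sends skyscraper sheaves of closed points of $\mathcal{Y}_l$ to stable $\mathcal{A}_l$-modules of this dimension vector, producing a natural classifying map $\mathcal{Y}_l \to \mathcal{M}_{\mathcal{A}_l}$ which \cite{3Dflops} shows is an isomorphism of $\mathcal{R}_l$-schemes. To access the flopped side one applies the same procedure with the tilting bundle $\mathcal{T}^+$ on $\mathcal{Y}_l^+$ obtained from $\mathcal{T}$ by mutation of the non-trivial summands; its endomorphism algebra is Morita equivalent (as an algebra over $\mathcal{R}_l$, with the same distinguished idempotent) to $\mathcal{A}_l^{\op}$, yielding $\mathcal{Y}_l^+ \cong \mathcal{M}_{\mathcal{A}_l^{\op}}$ together with the identification $e_0 \mathcal{A}_l^{\op} e_0 = \mathcal{R}_l$.

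The main obstacle I expect is essentially bookkeeping rather than genuinely new mathematics: checking that a single algebra $\mathcal{A}_l$ with a single idempotent $e_0$ produces both sides of the flop after passing to the opposite algebra. Concretely, this requires matching the mutated tilting bundle on $\mathcal{Y}_l^+$ with the $\mathcal{R}_l$-reflexive dual $\Hom_{\mathcal{R}_l}(\mathcal{T}, \mathcal{R}_l)$ of $\mathcal{T}$ on $\mathcal{Y}_l$, and verifying that the two resulting $\mathcal{R}_l$-algebra structures on $e_0 \mathcal{A}_l e_0$ and $e_0 \mathcal{A}_l^{\op} e_0$ coincide. The remaining inputs -- $\pi_{l,*}$-acyclicity of $\mathcal{T}$, vanishing of $\Ext^{>0}_{\mathcal{Y}_l}(\mathcal{T}, \mathcal{T})$, and the GIT description of the stable locus -- are standard for crepant resolutions of Gorenstein threefolds and transport without essential change to the universal family over $\mathfrak{h}_l$.
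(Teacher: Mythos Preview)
Your overall strategy matches the paper's: apply Van den Bergh's tilting theorem to $\pi_l$ for the derived equivalence, then recover the geometry as a moduli space of stable modules. Two points need correction.

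First, the identification $\mathcal{M}_{\mathcal{A}_l} \cong \mathcal{Y}_l$ is \emph{not} in \cite{3Dflops}. Van den Bergh's moduli result (which is in \cite{NCCRs}, not \cite{3Dflops}) shows only that for an NCCR in dimension~$3$ the moduli of stable representations is \emph{some} crepant resolution of $\Spec R$, not necessarily the original $Y$. The paper instead invokes \cite[Corollary~5.2.4]{Karmazyn2017} (Proposition~\ref{proposition: Moduli space construction} here), which proves $Y \cong \mathcal{M}_A$ directly under Assumption~\ref{ass:fibres} with no smoothness, crepancy, or dimension hypothesis. This matters because $\mathcal{Y}_l$ is neither smooth nor a threefold: the fibres of $\mathcal{Y}_l \to \mathfrak{h}_l$ are \emph{partial} resolutions of Kleinian surface singularities, and $\dim \mathcal{Y}_l = 2 + \dim \mathfrak{h}_l$ ranges up to $10$ for length~$6$. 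Your closing remark that the remaining inputs are ``standard for crepant resolutions of Gorenstein threefolds'' is therefore off target, and the NCCR moduli argument you have in mind does not apply.

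Second, for the flopped side the paper does not go via mutation on $\mathcal{Y}_l^+$. It uses instead that $T^\vee = \mathcal{O}_{\mathcal{Y}_l} \oplus T'^\vee$ is a projective generator of ${}^{-1}\Per(\mathcal{Y}_l/\mathcal{X}_l)$ with $\End_{\mathcal{Y}_l}(T^\vee)^{\op} \cong \mathcal{A}_l^{\op}$ (Theorem~\ref{Theorem:Van den Bergh Tilting}(2)), combines this with the equivalence ${}^{-1}\Per(Y/X) \cong {}^{0}\Per(Y^+/X)$ for flopping contractions (Corollary~\ref{Corollary: derived flops}), and then applies \cite[Corollary~5.3.2]{Karmazyn2017} to get $\mathcal{M}_{\mathcal{A}_l^{\op}} \cong \mathcal{Y}_l^+$. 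Your mutation route is not wrong, but the dual-bundle argument makes $e_0 \mathcal{A}_l^{\op} e_0 \cong (e_0 \mathcal{A}_l e_0)^{\op} \cong \mathcal{R}_l$ immediate and sidesteps precisely the bookkeeping you flag as the main obstacle.
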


In order to use the universal flopping algebras to calculate examples of threefold flops it is necessary to obtain an explicit description of the algebras. Having introduced the universal flopping algebras, the next main result of this paper is to explicitly present these algebras as the path algebras of quivers with relations over polynomial rings $\mathbb{H}_l$.

\begin{theorem}[Proof in Section \ref{Section:Proof of Explicit}] \label{Theorem: Explicit presentations}
The universal flopping algebra of length $l$ can be presented as a path algebra over a polynomial ring $\mathbb{H}_l$ for a quiver and relations: 
\[
\mathcal{A}_l:= \frac{\mathbb{H}_lQ_l}{I_l}.
\]
For each each $l$ we list the polynomial ring $\mathbb{H}_l$, the quiver $Q_l$, and relations generating the two sided ideal $I_l$.
\begin{enumerate}
\item Length 1, $\mathbb{H}_1 \cong \mathbb{C}[t]$.
\begin{align*}
\begin{aligned}
\begin{tikzpicture} [bend angle=15, looseness=1]
\node (C0) at (0,0)  {$0$};
\node (C1) at (2.5,0)  {$1$};
\draw [->,bend left=35] (C0) to node[gap]  {\scriptsize{$a_0$}} (C1);
\draw [->,bend left] (C0) to node[gap]  {\scriptsize{$a_1^*$}} (C1);
\draw [->,bend left] (C1) to node[gap]  {\scriptsize{$a_0^*$}} (C0);
\draw [->,bend left=35] (C1) to node[gap]  {\scriptsize{$a_1$}} (C0);
\end{tikzpicture}
\end{aligned}
& \qquad
\begin{aligned}
&a_0a_0^*-a_1^*a_1=t e_0, \\
&a_1a_1^*-a_0^*a_0=-te_1.
\end{aligned}
\end{align*}
\item Length 2, $\mathbb{H}_2:= \mathbb{C}[t,T_0^\beta,T_0^\gamma,T_0^\delta]$.
\begin{align*}
\begin{aligned}
\begin{tikzpicture} [bend angle=15, looseness=1]
\node (C0) at (0,0)  {$0$};
\node (C1) at (2,0)  {$4$};
\draw [->,bend left] (C0) to node[above]  {\scriptsize{$a$}} (C1);
\draw [->,bend left] (C1) to node[below]  {\scriptsize{$a^*$}} (C0);
\draw [->, looseness=24, in=52, out=128,loop] (C1) to node[above] {$\scriptstyle{\beta}$} (C1);
\draw [->, looseness=24, in=-38, out=38,loop] (C1) to node[right] {$\scriptstyle{\delta}$} (C1);
\draw [->, looseness=24, in=-128, out=-52,loop] (C1) to node[below] {$\scriptstyle{\gamma}$} (C1);
\end{tikzpicture}
\end{aligned}
& \quad \quad
\begin{aligned}
&aa^*=te_0, \quad \beta^2=T_0^\beta e_4, \\ 
&\gamma^2=T_0^\gamma e_4, \quad \delta^2=T_0^\delta e_4, \\ 
&a^*a+\beta+\gamma + \delta = \frac{t}{2} \, e_4. \\ 
\end{aligned}
\end{align*}

\item Length 3, $\mathbb{H}_3 \cong \mathbb{C}[t,T^{\beta}_0,T^{\beta}_1,T^{\gamma}_0,T^{\gamma}_1,T_0^\delta ]$.
\begin{align*}
\begin{aligned}
\begin{tikzpicture} [bend angle=15, looseness=1]
\node (C0) at (0,0)  {$0$};
\node (C1) at (2,0)  {$6$};
\draw [->,bend left] (C0) to node[above]  {\scriptsize{$a$}} (C1);
\draw [->,bend left] (C1) to node[below]  {\scriptsize{$a^*$}} (C0);
\draw [->, looseness=24, in=52, out=128,loop] (C1) to node[above] {$\scriptstyle{\beta}$} (C1);
\draw [->, looseness=24, in=-38, out=38,loop] (C1) to node[right] {$\scriptstyle{\delta}$} (C1);
\draw [->, looseness=24, in=-128, out=-52,loop] (C1) to node[below] {$\scriptstyle{\gamma}$} (C1);
\end{tikzpicture}
\end{aligned}
\qquad
\begin{aligned}
& \delta a^*=a^* t, & & a \delta = t a, \\ 
&aa^*=(t^2 -T_0^{\delta})e_0, & &  a^*a=\delta^2 - T_0^{\delta}e_4,  \\ 
&\beta^3= T^{\beta}_1 \beta + T^{\beta}_0e_4, & &\gamma^3=T^{\gamma}_1 \gamma + T^{\gamma}_0e_4, \\ 
&\beta+\gamma + \delta = \frac{t}{3} \, e_4. \\ 
\end{aligned}
\end{align*}

\item Length 4, $\mathbb{H}_4:=\mathbb{C}[t,T^{\beta}_0,T^{\gamma}_0,T^{\gamma}_1,T^{\gamma}_2,T_0^\delta,T_1^\delta]$.
\begin{align*}
\begin{aligned}
\begin{tikzpicture} [bend angle=15, looseness=1]
\node (C0) at (0,0)  {$0$};
\node (C1) at (2,0)  {$7$};
\draw [->,bend left] (C0) to node[above]  {\scriptsize{$a$}} (C1);
\draw [->,bend left] (C1) to node[below]  {\scriptsize{$a^*$}} (C0);
\draw [->, looseness=24, in=52, out=128,loop] (C1) to node[above] {$\scriptstyle{\beta}$} (C1);
\draw [->, looseness=24, in=-38, out=38,loop] (C1) to node[right] {$\scriptstyle{\delta}$} (C1);
\draw [->, looseness=24, in=-128, out=-52,loop] (C1) to node[below] {$\scriptstyle{\gamma}$} (C1);
\end{tikzpicture}
\end{aligned}
\qquad
\begin{aligned}
& \delta a^*=a^* t, & & a \delta = ta, \\
&aa^*=(t^3-T^{\delta}_1 t - T^{\delta}_0) e_0, & &  a^*a=\delta^3- T^{\delta}_1 \delta  - T^{\delta}_0 e_7,  \\ 
&\beta^2=T^\beta_0 e_7, & &\gamma^4=T^{\gamma}_2 \gamma^2 + T^{\gamma}_1 \gamma + T^{\gamma}_0 e_7, \\ 
&\beta+\gamma +\delta = \frac{t}{4} \, e_7. \\ 
\end{aligned}
\end{align*}

\item Length 5, $\mathbb{H}_5:=\mathbb{C}[t,T_0^\delta,T_1^\delta,T_2^\delta,T_0,T_1,T_2,T_3]$.
\begin{align*}
\begin{aligned}
\begin{tikzpicture} [bend angle=15, looseness=1]
\node (C0) at (0,0)  {$0$};
\node (C1) at (2,0)  {$4$};
\draw [->,bend left] (C0) to node[above]  {\scriptsize{$a$}} (C1);
\draw [->,bend left] (C1) to node[below]  {\scriptsize{$a^*$}} (C0);
\draw [->, looseness=24, in=52, out=128,loop] (C1) to node[above] {$\scriptstyle{\beta}$} (C1);
\draw [->, looseness=24, in=-38, out=38,loop] (C1) to node[right] {$\scriptstyle{\delta}$} (C1);
\draw [->, looseness=24, in=-128, out=-52,loop] (C1) to node[below] {$\scriptstyle{\gamma}$} (C1);
\end{tikzpicture}
\end{aligned}
& \quad \quad
\begin{aligned}
&a\delta= ta, \qquad \delta a^* = a^*t,\\
&aa^*=(t^4 - T^{\delta}_2t^2 - T_1^{\delta} t -T_0^{\delta})e_0, \\
&a^*a=\delta^4 - T^{\delta}_2\delta^2 - T_1^{\delta} \delta -T_0^{\delta}e_4, \\
&\gamma \beta \gamma +\gamma^2 \beta +\gamma \beta^3=-T_3\gamma \beta -T_2 \gamma -T_0e_4 \\
&
(\gamma+ \beta^2)^2+\beta \gamma \beta = -T_3(\gamma+\beta^2)  -T_2\beta -T_1e_4,\\
&\delta - \beta =\frac{t}{5}e_4.
\end{aligned}
\end{align*}

\item Length 6, $\mathbb{H}_6:=\mathbb{C}[t,T^{\beta}_0,T^{\gamma}_0,T^{\gamma}_1,T_0^\delta,T_1^\delta,T_2^\delta,T_3^\delta]$.

\begin{align*}
\begin{aligned}
\begin{tikzpicture} [bend angle=15, looseness=1]
\node (C0) at (0,0)  {$0$};
\node (C1) at (2,0)  {$8$};
\draw [->,bend left] (C0) to node[above]  {\scriptsize{$a$}} (C1);
\draw [->,bend left] (C1) to node[below]  {\scriptsize{$a^*$}} (C0);
\draw [->, looseness=24, in=52, out=128,loop] (C1) to node[above] {$\scriptstyle{\beta}$} (C1);
\draw [->, looseness=24, in=-38, out=38,loop] (C1) to node[right] {$\scriptstyle{\delta}$} (C1);
\draw [->, looseness=24, in=-128, out=-52,loop] (C1) to node[below] {$\scriptstyle{\gamma}$} (C1);
\end{tikzpicture}
\end{aligned}
& \quad \quad
\begin{aligned}
& \delta a^*=a^* t, \qquad a \delta = t a, \\ 
&aa^*=(t^5-T^\delta_3t^3-T^\delta_2t^2-t T^\delta_1 -T^\delta_0)e_0,  \\ 
& a^*a=\delta^5 -T^\delta_3\delta^3-T^\delta_2\delta^2- T^\delta_1 \delta-T^\delta_0 e_8,\\ 
&\beta^2=T^{\beta}_0, \qquad\gamma^3=T^{\gamma}_1 \gamma + T^{\gamma}_0 e_8, \\ 
&\beta+\gamma + \delta = \frac{t}{6} \, e_8. \\ 
\end{aligned}
\end{align*}
\end{enumerate}
\end{theorem}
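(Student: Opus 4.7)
The plan is to realise each universal flopping algebra $\mathcal{A}_l$ as an idempotent contraction of a Crawley--Boevey--Holland deformed preprojective algebra, and then to read off the quiver and relations by explicit path-algebra computation.

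By the preceding proposition, $\mathcal{A}_l \cong \End_{\mathcal{Y}_l}(\mathcal{O}_{\mathcal{Y}_l} \oplus \mathcal{N}_l)$ for Van den Bergh's tilting bundle, and the universal flop $\pi_l : \mathcal{Y}_l \to \mathcal{X}_l$ is a flat family of partial resolutions of Kleinian singularities of type $\Gamma_l$ over $\mathfrak{h}_l$, obtained from the full simultaneous resolution by contracting every exceptional curve except the one corresponding to the black vertex $b$ of $\Gamma_l$. Via the Kronheimer--Reid--Nakajima moduli description, the full simultaneous resolution is realised from the deformed preprojective algebra $\Pi^{\lambda}(\widetilde{\Gamma}_l)$ of the affine Dynkin diagram $\widetilde{\Gamma}_l$, with parameter $\lambda$ ranging over $\mathfrak{h}_l$. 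The tilting summands $\mathcal{O}_{\mathcal{Y}_l}$ and $\mathcal{N}_l$ correspond to the extended vertex $0$ and the black vertex $b$ respectively; the rank of $\mathcal{N}_l$ equals the coordinate of $b$ in the minimal imaginary root of $\widetilde{\Gamma}_l$, which is precisely $l$. This gives the key identification
\begin{equation*}
\mathcal{A}_l \;\cong\; e\,\Pi^{\lambda}(\widetilde{\Gamma}_l)\,e, \qquad e := e_0 + e_b,
\end{equation*}
as $\mathbb{H}_l$-algebras, where $\mathbb{H}_l = \mathbb{C}[\mathfrak{h}_l]$.

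Having made this identification, the quiver presentation is obtained by enumerating paths in $\Pi^{\lambda}(\widetilde{\Gamma}_l)$ that begin and end in $\{0,b\}$. The single pair of arrows $a : 0 \to b$ and $a^* : b \to 0$ is realised as the shortest composed path along the arm of $\widetilde{\Gamma}_l$ joining $0$ to $b$, together with its reverse; the loops $\beta,\gamma,\delta$ at $b$ arise as the shortest round-trips into each of the other arms emanating from $b$. The relations then come out systematically: iterated use of the deformed preprojective relation at each interior eliminated vertex reduces higher powers of a given loop to a lower-order polynomial expression in the same loop with coefficients in $\mathbb{H}_l$, producing the polynomial identities $\beta^{n_\beta} = \sum_i T^\beta_i \beta^i$ and their analogues; the same procedure applied to the arm containing the extended vertex produces the relations coupling $a$, $a^*$, $t$ and $\delta$ (including the intertwiners $a\delta=ta$, $\delta a^* = a^* t$); finally, the preprojective relation at $b$ itself collapses, after rescaling, to a linear identity among $\beta,\gamma,\delta$ and $t$.

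The theorem is then proved case by case, for the affine Dynkin diagrams $\widetilde{A}_1$, $\widetilde{D}_4$, $\widetilde{E}_6$, $\widetilde{E}_7$, and $\widetilde{E}_8$ (the last with two different choices of black vertex giving lengths $5$ and $6$). The cases $l=1,2,3,4$ are essentially mechanical once the arms are identified: each arm meeting $b$ has length at most one interior vertex, so the iterative reduction terminates immediately. The main obstacle is length $5$, where the black vertex of $\widetilde{E}_8$ has two incident arms of length $\geq 2$; here the loops $\beta$ and $\gamma$ cannot be decoupled from each other, forcing the appearance of mixed monomials such as $\gamma\beta\gamma + \gamma^2\beta + \gamma\beta^3$ obtained by careful simultaneous use of the preprojective relations at two interior white vertices, and producing the atypical linear constraint $\delta - \beta = \tfrac{t}{5}e_4$. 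Completeness of each given list of relations is then checked either by a Gr\"obner-style reduction argument, or by matching the $\mathbb{H}_l$-Hilbert series of $\mathbb{H}_l Q_l/I_l$ with that of $e\Pi^{\lambda}(\widetilde{\Gamma}_l)e$ computed via the Crawley--Boevey--Holland formula, or finally by verifying directly through the previous proposition that the moduli space $\mathcal{M}_{\mathcal{A}_l}$ of the presented algebra reproduces $\mathcal{Y}_l$.
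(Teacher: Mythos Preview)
Your overall strategy is the same as the paper's: realise $\mathcal{A}_l$ as an idempotent subalgebra of the Crawley--Boevey--Holland deformed preprojective algebra and read off the presentation by explicit path computations, case by case. However, two steps in your outline are genuinely incomplete compared with what the paper actually does.

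First, the base ring. The deformed preprojective algebra $\Pi^{\lambda}(\widetilde{\Gamma})$ is naturally an $\mathbb{H}_\Gamma$-algebra, not an $\mathbb{H}_l$-algebra, and the relations you extract for the loops (e.g.\ $\beta(\beta-t_1)=0$, $b(b-t_3)(b-(t_2+t_3))=0$, \ldots) are a priori written in the non-invariant parameters $t_i$. The assertion ``$\mathcal{A}_l\cong e\Pi^{\lambda}e$ as $\mathbb{H}_l$-algebras'' is therefore not automatic: what Proposition~\ref{Prop:Basechange} actually gives is $e_C\mathcal{A}_\Gamma e_C\cong\mathcal{A}_l\otimes_{\mathbb{H}_l}\mathbb{H}_\Gamma$, and the remaining content is to show that $e_C\mathcal{A}_\Gamma e_C$ descends to $\mathbb{H}_l=\mathbb{H}_\Gamma^{W_C}$. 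The paper does this by an explicit affine shift of each loop (e.g.\ $\beta=b-(2t_3+t_2)/3$) which centres the characteristic polynomial and turns the coefficients into elementary symmetric functions in the $\tau^{\bullet}_i$, hence into the $W_C$-invariant generators $T^{\bullet}_j$. Without this step you have not produced a presentation over $\mathbb{H}_l$.

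Second, completeness of the relations. You offer three options (Gr\"obner reduction, Hilbert-series matching, moduli comparison) without committing to one. The paper's argument is sharper and should be used: in Section~\ref{Section: Partial resolutions presentations} it first determines the \emph{minimal} presentation of the central fibre $A_{\Gamma_C}=e_CA_\Gamma e_C$ by computing the graded projective resolutions of the simple modules, so one knows exactly how many arrows and relations (and in which degrees) a minimal presentation requires. Then, since both the proposed path algebra $\mathbb{H}_\Gamma Q/I$ and $e_C\mathcal{A}_\Gamma e_C$ are flat graded $\mathbb{H}_\Gamma$-modules whose central fibres agree, the surjection is an isomorphism by graded Nakayama. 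Your alternatives would work in principle but are considerably harder to execute, and the moduli option is circular (it presumes the presentation is already correct in order to compute $\mathcal{M}_{\mathcal{A}_l}$).

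Finally, your diagnosis of length~$5$ is slightly off. The difficulty is not that two arms have length $\geq 2$ (that already happens for $E_6$, $E_7$); it is that the black vertex is \emph{not} the trivalent vertex of $E_8$, so only two arms meet $b$ and the three loops cannot all be ``round-trips into distinct arms''. The paper handles this by first passing through the intermediate idempotent $e_{C'}=e_0+e_4+e_8$ (keeping both vertex $4$ and the trivalent vertex $8$), presenting $e_{C'}\mathcal{A}_\Gamma e_{C'}$, and then contracting $e_8$; the loops $\beta,\gamma$ at vertex $4$ are then built from paths through vertex~$8$, which is why the mixed relations appear and why $\delta-\beta$ is forced to be scalar.
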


This quiver with relations description is both explicit and compact and so well suited for calculations. One immediate application of such an explicit presentation is to construction examples of noncommutative crepant resolutions corresponding to simple threefold flops of all lengths by considering various classifying maps. Such examples are constructed as paths algebras with quivers with relations defined by a superpotential in Section \ref{Sec: Examples}. Alongside constructing these examples, we outline several further applications below.

\subsection*{Universal flops and matrix factorisations.}
The universal flopping algebras can also be used to resolve conjectures of Curto and Morrison on realising universal flops as matrix factorisations. They conjecture that particular matrix factorisations exist from which the universal flop can be recovered by a process they call "Grassman blowup", see \cite{CurtoMorrison}. 

\begin{conjecture}[{\cite[Conjecture 3]{CurtoMorrison}}] \label{conj:CurtoMorrison2}
For a partial resolution of a rational double point corresponding
to a single vertex in the Dynkin diagram with coefficient ℓ in the maximal root, the versal deformation $\mathcal{X}$ over $\text{PRes}$ has two matrix factorizations of size $2l \times 2l$, such that the two simultaneous partial resolutions can be obtained as the Grassmann blowups of the corresponding Cohen-Macaulay modules. Moreover, the matrices take the special form $x I_{2l} - \Xi$ and $x I_{2l} +\Xi$ in appropriate coordinates.
\end{conjecture}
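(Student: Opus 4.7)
The plan is to extract the matrix factorisations directly from the quiver-with-relations presentation of the universal flopping algebra $\mathcal{A}_l$ in Theorem \ref{Theorem: Explicit presentations}, and then invoke Proposition \ref{Prop:Summary} for the Grassmann blowup identification.

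I would take as the candidate MCM module $M_l := e_l \mathcal{A}_l e_0$ over the centre $\mathcal{R}_l = e_0 \mathcal{A}_l e_0 \cong \cO(\mathcal{X}_l)$. Since $\mathcal{A}_l$ is a noncommutative crepant resolution in the sense of Van den Bergh (as used in Proposition \ref{Prop:Summary}), $M_l$ is maximal Cohen--Macaulay of generic rank $l$ and hence admits a matrix factorisation of size $2l \times 2l$ of the hypersurface equation $F$ cutting out $\mathcal{X}_l$ inside a smooth ambient space. The Grassmann blowup identification is then a consequence of Proposition \ref{Prop:Summary}: $\mathcal{Y}_l \cong \mathcal{M}_{\mathcal{A}_l}$ is by construction the moduli space of $(1,l)$-dimensional representations of $\mathcal{A}_l$ with framing at $e_0$, and this agrees with the Grassmann blowup of $\cok(\varphi)$ in the sense of Curto--Morrison. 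The opposite-algebra counterpart yields the second matrix factorisation attached to $\mathcal{Y}_l^+$.

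The heart of the argument is producing the factorisation in the specific symmetric form $\varphi = x I_{2l} - \Xi$ and $\psi = x I_{2l} + \Xi$. The crucial ingredients are the intertwining relations
\[
a \delta = t a, \qquad \delta a^* = a^* t, \qquad a a^* = p_l(t) e_0, \qquad a^* a = p_l(\delta) e_l
\]
(and their analogues for $l = 1, 2$) recorded in Theorem \ref{Theorem: Explicit presentations}, where $p_l$ is an explicit polynomial whose coefficients are the deformation parameters $T^\delta_i$. These say that $a$ and $a^*$ swap the actions of $t$ and $\delta$; choosing a basis of $M_l$ built from products $\delta^i \beta^j \gamma^k$ multiplied by either $e_l$ or $a$, multiplication by $\delta$ becomes a $2l \times 2l$ matrix $\Xi$. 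Selecting the coordinate $x$ on the ambient smooth space in which the ADE hypersurface equation takes the quadratic shape $x^2 = g(y,z;\mathbb{H}_l)$ (available in all ADE types and preserved by the versal deformation), the loop relations $\beta^2, \gamma^3, \ldots$ together with $aa^* = p_l(t)$ and $a^*a = p_l(\delta)$ translate precisely into $\Xi^2 = g \cdot I_{2l}$, so that $(xI - \Xi)(xI + \Xi) = F \cdot I_{2l}$, as required.

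The main obstacle is arranging for the matrices to take this symmetric form rather than a generic $2l \times 2l$ matrix factorisation of $F$. For $l = 2$ this is verified by direct computation in Section \ref{Example: Length 2} and reproduces the Curto--Morrison construction; for higher $l$ the verification reduces to a compatible choice of basis for $M_l$ and of the coordinate $x$ on the ambient space, and can be made uniform by working with the coloured Dynkin data $\Gamma_C$ of Theorem \ref{Theorem: Length Classification}. A subtlety that needs attention is that for $l = 5$ the algebra structure at vertex $e_l$ is richer (the relations $\gamma \beta \gamma + \gamma^2 \beta + \gamma \beta^3 = \cdots$ are non-monomial), so the basis and the matrix $\Xi$ must be described with more care in that case, but the overall strategy is unchanged.
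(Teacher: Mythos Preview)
Your overall architecture matches the paper's: the MCM modules are indeed $e_0\mathcal{A}_l(1-e_0)$ and $(1-e_0)\mathcal{A}_l e_0$ (Proposition~\ref{Prop:MCM Equivalence}), and the identification of the Grassmann blowup with the moduli space $\mathcal{M}_{\mathcal{A}_l}$ via Proposition~\ref{Prop:Summary} is exactly how the paper handles that part of the conjecture.

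The gap is in your mechanism for the symmetric form $xI_{2l}\pm\Xi$. You take $\Xi$ to be the matrix of multiplication by $\delta$ and then assert that the loop relations force $\Xi^2=g\cdot I_{2l}$. This is not correct: for length~$2$ one has $\delta^2=T_0^\delta e_4$, so the matrix of $\delta^2$ is the scalar $T_0^\delta$, not the hypersurface polynomial $g$; for length~$3$ the relation $a^*a=\delta^2-T_0^\delta$ shows $\delta^2$ is not even central, so its matrix is not scalar at all. The intertwining relations $a\delta=ta$, $\delta a^*=a^*t$ that you highlight are computational conveniences but do not by themselves produce the symmetric factorisation.

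The paper's argument (Appendix~\ref{Appendix}) is different and conceptually simpler. One first shows that $\mathcal{R}_l$, being a graded deformation of an ADE hypersurface, can be written as $\mathbb{H}_l[x,y,z]/(x^2-g)$ for some $g\in\mathbb{H}_l[y,z]$ after completing the square in the distinguished generator $x$ (for $l=2$ this is $x'=a\beta\gamma a^*$ shifted by $P/2$). One then chooses $2l$ generators of $\mathcal{N}_l$ as an $\mathcal{R}_l$-module and observes that they also generate $\mathcal{N}_l$ over the subring $\mathbb{H}_l[y,z]$. The matrix $\Xi$ is the matrix of \emph{left multiplication by the central element $x$} in this generating set, with entries in $\mathbb{H}_l[y,z]$. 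Because $x^2=g$ holds in $\mathcal{R}_l$, the operator of multiplication by $x$ automatically satisfies $\Xi^2=g\cdot I_{2l}$, giving $(xI-\Xi)(xI+\Xi)=(x^2-g)I$. The loop relations and intertwining relations enter only at the level of carrying out this computation explicitly, not as the structural reason the symmetric form exists.
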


In particular, this conjecture proposes that the two simultaneous resolutions occurring in the universal flop of length $l$ can be recovered from a corresponding $2l \times 2l$ matrix factorisation of the hypersurface defining $\mathcal{X}_l:= \Spec \mathcal{R}_l$. Using the universal flopping algebras, we can verify this claim.

Firstly, the universal flopping algebras introduced in this paper also encode the data of the commutative algebra $\mathcal{R}_l$ and two rank $l$ MCM $\mathcal{R}_l$-modules $\mathcal{N}_l$ and $\mathcal{N}^{+}_l$.

\begin{pproposition}[See Proposition \ref{Prop:MCM Equivalence}]
There are isomorphisms $e_0 \mathcal{A}_l e_0 \cong \mathcal{R}_l \cong e_0 \mathcal{A}_l^{\op}e_0$. The $\mathcal{R}_l$-modules $\mathcal{N}_l:=e_0\mathcal{A}_l (1-e_0)$ and $\mathcal{N}_l^{+}:= (1-e_0) \mathcal{A}_l e_0$ are maximal Cohen Macaulay such that $\End_{\mathcal{R}_l}(\mathcal{R}_l \oplus \mathcal{N}_l)^{\op} \cong \mathcal{A}_l$ and $\End_{\mathcal{R}_l}(\mathcal{R}_l \oplus \mathcal{N}_l^{+})^{\op} \cong \mathcal{A}_l^{\op}$.
\end{pproposition}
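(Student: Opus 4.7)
The plan is to leverage the realisation of $\mathcal{A}_l$ as the endomorphism algebra of a tilting bundle on $\mathcal{Y}_l$ that underlies the derived equivalence of the preceding proposition. Concretely, one writes $\mathcal{A}_l \cong \End_{\mathcal{Y}_l}(\mathcal{T}_l)^{\op}$ for a tilting bundle $\mathcal{T}_l = \mathcal{O}_{\mathcal{Y}_l} \oplus \mathcal{M}_l$ whose direct summands correspond to the idempotents $e_0$ and $1-e_0$ respectively, and similarly for $\mathcal{Y}_l^+$ giving $\mathcal{A}_l^{\op} \cong \End_{\mathcal{Y}_l^+}(\mathcal{T}_l^+)^{\op}$.

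First I would establish the corner ring isomorphism. Direct computation gives $e_0 \mathcal{A}_l e_0 \cong \End_{\mathcal{Y}_l}(\mathcal{O}_{\mathcal{Y}_l})^{\op} \cong H^0(\mathcal{Y}_l, \mathcal{O}_{\mathcal{Y}_l})$, and because $\pi_l$ is a crepant resolution of the Gorenstein affine $\mathcal{X}_l = \Spec \mathcal{R}_l$ with rational singularities, $\pi_{l,*}\mathcal{O}_{\mathcal{Y}_l} = \mathcal{R}_l$. The opposite statement follows from commutativity of $\mathcal{R}_l$, via $e_0 \mathcal{A}_l^{\op} e_0 = (e_0 \mathcal{A}_l e_0)^{\op} \cong \mathcal{R}_l$.

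Next I would identify $\mathcal{N}_l$ and $\mathcal{N}_l^{+}$ as pushforwards and verify the MCM property. Computing the off-diagonal Hom spaces of the tilting bundle gives $\mathcal{N}_l \cong \Hom_{\mathcal{Y}_l}(\mathcal{M}_l, \mathcal{O}_{\mathcal{Y}_l}) \cong \pi_{l,*}(\mathcal{M}_l^{\vee})$, and the analogue on $\mathcal{Y}_l^+$ expresses $\mathcal{N}_l^{+}$ as $\pi^+_{l,*}(\mathcal{M}_l^+)$. Because these sheaves are locally free on the crepant resolutions and the tilting property forces higher pushforwards to vanish, Van den Bergh's NCCR construction (essentially the combination of Grauert--Riemenschneider vanishing with the Auslander--Buchsbaum formula) shows that the resulting $\mathcal{R}_l$-modules are maximal Cohen--Macaulay.

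Finally, for the endomorphism identifications, I would apply the standard corner-ring recognition principle. The $(\mathcal{A}_l, \mathcal{R}_l)$-bimodule $e_0 \mathcal{A}_l$ decomposes on the right as $\mathcal{R}_l \oplus \mathcal{N}_l$, and via the tilting equivalence corresponds to $\pi_{l,*}\mathcal{T}_l$; hence $\End_{\mathcal{R}_l}(\mathcal{R}_l \oplus \mathcal{N}_l)^{\op} \cong \End_{\mathcal{Y}_l}(\mathcal{T}_l)^{\op} \cong \mathcal{A}_l$, and the analogous statement for $\mathcal{A}_l^{\op}$ follows by symmetry. I expect the main technical obstacle to be verifying that the natural map $\mathcal{A}_l \to \End_{\mathcal{R}_l}(e_0 \mathcal{A}_l)^{\op}$ is an isomorphism rather than a mere injection; this reduces to reflexivity of $\mathcal{A}_l$ as an $\mathcal{R}_l$-module, a property built into the NCCR formalism already invoked in the preceding proposition and checked locally at codimension-one points using that $\mathcal{Y}_l \to \mathcal{X}_l$ is an isomorphism away from the singular locus.
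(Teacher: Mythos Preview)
Your overall strategy matches the paper's: realise $\mathcal{A}_l$ via the tilting bundle $T=\mathcal{O}_{\mathcal{Y}_l}\oplus T'$, identify $\mathcal{N}_l$ and $\mathcal{N}_l^{+}$ as pushforwards, and then invoke Van den Bergh's results for the MCM property and the isomorphism $\End_{\mathcal{Y}_l}(T)\cong\End_{\mathcal{X}_l}(\pi_{l*}T)$. A few bookkeeping slips and one unnecessary detour are worth flagging.

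First, tracking the opposite carefully: with $\mathcal{A}_l=\End_{\mathcal{Y}_l}(T)^{\op}$ and $e_0$ the projection onto $\mathcal{O}_{\mathcal{Y}_l}$, one has $e_0\mathcal{A}_l(1-e_0)\cong\Hom_{\mathcal{Y}_l}(\mathcal{O}_{\mathcal{Y}_l},T')\cong\pi_{l*}T'$, not $\pi_{l*}(T')^{\vee}$ as you write; what you have identified is in fact $\mathcal{N}_l^{+}$. Relatedly, $e_0\mathcal{A}_l$ is an $(\mathcal{R}_l,\mathcal{A}_l)$-bimodule, not an $(\mathcal{A}_l,\mathcal{R}_l)$-bimodule. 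Second, the paper does not pass to $\mathcal{Y}_l^{+}$ for $\mathcal{N}_l^{+}$: it stays on $\mathcal{Y}_l$ and uses the dual bundle $T^{\vee}$, obtaining $\mathcal{N}_l^{+}\cong\pi_{l*}(T')^{\vee}$ directly. This avoids invoking the flop equivalence altogether. Third, the inputs you need are not crepancy or the NCCR formalism but \cite[Lemma~3.2.9 and Proposition~3.2.10]{3Dflops}, which hold under Assumption~\ref{ass:fibres} alone; this matters here because $\mathcal{Y}_l$ is not smooth (its central fibre is a \emph{partial} resolution of a Kleinian singularity), so calling $\pi_l$ a crepant resolution is inaccurate, and the reflexivity verification you anticipate is already subsumed in those cited results.
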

These MCM modules are in correspondence with matrix factorisations, see \cite{EisenbudMF} for the relationship between MCMs and matrix factorisations. These matrix factorisations can be explicitly recovered from the presentations of universal flopping algebras in Theorem \ref{Theorem: Explicit presentations} using the calculations outlined in Appendix \ref{Appendix}.

Secondly, moving from a matrix factorisation description to a noncommutative algebra description the Grassman blowup construction of Curto and Morrison is replaced by the moduli of representations construction, and Proposition \ref{Prop:Summary} shows that the universal flop can be recovered from the universal flopping algebra by this moduli construction. (Indeed,  the Grassman blowup of the $R$-module $N$ can be seen to be the moduli of 0-generated dim $(1,\rk \, N)$ representations of $\End_{R}(R \oplus N)^{\op}$.)

Finally, the existence of a matrix factorisation of the form $(x I_{2l} - \Xi)(x I_{2l} + \Xi)$ proposed by Curto and Morrison in Conjecture \ref{conj:CurtoMorrison2} follows by direct calculation from the universal flopping algebras, see Appendix \ref{Appendix}.

This interpretation of the universal flopping algebras as $\mathcal{A}_l:=\End_{\mathcal{R}_l}(\mathcal{R}_l \oplus \mathcal{N}_l)^{\op}$, the moduli construction given in Proposition \ref{Prop:Summary}, and the explicit calculations outlined in Appendix \ref{Appendix} confirm Curto and Morrison's Conjecture \ref{conj:CurtoMorrison2} for all diagrams occurring in the length classification from a noncommutative algebra perspective. 

\begin{example}(Length 2)
The main result of Curto and Morrison's work, \cite[Section 4, Main Theorem]{CurtoMorrison}, shows that Conjecture \ref{conj:CurtoMorrison2} holds in lengths 1 and 2.  
We explicitly recover the length 2 example from the universal flopping algebra of length 2 in Example \ref{Example: Length 2}.

In particular, Curto and Morrison realise the universal flop of length 2, \cite[Section 5]{CurtoMorrison}, via the data of the commutative algebra $\mathcal{R}_2:=\mathbb{C}[x,y,z,u,w,v,t]/f$ where
\begin{equation} \label{Equation:Morrison+Curto}
f=x^2+uy^2+2vyz+wz^2+(uw-v^2)t^2.
\end{equation} and a matrix factorisation 
\begin{equation} \label{Equation:MatrixFactorisation}
\Phi_2=\begin{pmatrix}
x-vt & y & z & t \\
-uy-2zv & x+vt & -ut & z \\
-wz & wt & x-vt & -y \\
-uwt & -wz & uy+2vz & x+vt 
\end{pmatrix}
\end{equation}
and
\begin{equation}
\Phi^+_2=\begin{pmatrix}
x+vt & -y & -z & -t \\
uy+2zv & x-vt & ut & -z \\
wz & -wt & x+vt & y \\
uwt & wz & -uy-2vz & x-vt 
\end{pmatrix},
\end{equation}
such that $\Phi_2 \Phi^+_2 = f \textnormal{I}_4 = \Phi^+_2 \Phi_2$. that encodes a pair of maximal Cohen-Macaulay (MCM) $\mathcal{R}_2$-modules $\mathcal{N}_2$ and $\mathcal{N}_2^{+}$. The corresponding "Grassman blowup" is discussed in detail in \cite[Section 5]{CurtoMorrison}. 
\end{example}

\begin{remark}
While the universal flopping algebras are presented in a compact manner, the corresponding hypersurface equations and matrix factorisations for lengths $>2$ are too large to be explicitly recorded in this paper. The calculations outlined in Appendix \ref{Appendix} allow the interested reader to construct these equations and matrix factorisations for themselves.
\end{remark}

\subsection*{Universal flops and contraction algebras}

A further advantage of the universal flopping algebras are ready-built in the language of the homological minimal model program. 

An invariant arising from this noncommutative approach to understanding the minimal model program is the contraction algebra $A_{\text{con}}$ introduced by Donovan and Wemyss, \cite{WemyssDonovan}. This is a finite dimensional algebra, and Hua has shown that (the $A_{\infty}$ enhanced) contraction algebra is a complete invariant of simple flops: a threefold flop is classified up to analytic isomorphism by the $A_{\infty}$-structure of the contraction algebra \cite{ZhengHua}. As such it should be expected that any invariant of simple threefold flopping contractions can be recovered from the contraction algebra.

It was shown by Toda and Hua that the contraction algebra encodes the length invariant and Gopakumar-Vafa invariants in a straightforward manner, see \cite{TodaGVInvariants} and \cite{HuaToda}.

\begin{proposition}[{\cite[Theorem 4.2]{HuaToda}}] \label{Prop:GKV}
Let $A_{\textnormal{con}}$ be the contraction algebra associated to a simple threefold flopping contraction. There exists a deformation of $A_{\textnormal{con}}$ with a fibre that is a semisimple algebra, and for any such deformation and fibre the semisimple algebra decomposes as $\bigoplus_{i=1}^{6} \textnormal{Mat}_{i\times i}(\mathbb{C})^{\oplus n_i}$ for $n_i \in \mathbb{N}$ the Gopakumar-Vafa invariants associated to the flopping contraction. As the dimension of fibres is constant under in flat families it follows that 
\[ \dim A_{\textnormal{con}} = \sum n_i \, i^2 
\quad \text{ and } \quad
\dim \frac{A_{\textnormal{con}}}{\langle [a,b]: a,b \in A_{\textnormal{con}} \rangle} = n_1.
\]
Moreover, the largest $i$ such that $n_i$ is nonzero is the length of the flop. 
\end{proposition}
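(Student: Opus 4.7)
The plan is to realise the contraction algebra $A_{\textnormal{con}}$ as a specific fibre of a flat family of finite dimensional algebras arising from the universal flopping algebras, and then to analyse the generic fibre of this family. By the construction of Donovan--Wemyss, $A_{\textnormal{con}}$ is the stable endomorphism algebra $\underline{\End}_R(N)$, which can be written as $\End_{\mathcal{A}}(\mathcal{A}e_1)/[\text{maps factoring through}\ \mathcal{A}e_0]$ where $\mathcal{A} = \End_R(R \oplus N)^{\op}$ is the contraction-side noncommutative algebra. Under the universal flop picture recalled in Proposition \ref{Prop:Summary}, any simple flopping contraction of length $l$ is pulled back via some classifying map $\nu : \Delta \to \mathfrak{h}_l$, and varying the pullback yields a flat family of such finite dimensional algebras over the base. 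This gives the required deformation of $A_{\textnormal{con}}$.

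Next I would produce a fibre of this family which is semisimple. The strategy is to choose the deformation direction in $\mathfrak{h}_l$ so that after a small perturbation the threefold becomes smooth away from a finite collection of isolated flopping contractions, each a standard model flop of some length $i \leq l$. Concretely, on the algebraic side one perturbs the parameters $T_{\bullet}^{\bullet}$ in the explicit presentations of Theorem \ref{Theorem: Explicit presentations} so that the relations defining $\mathcal{A}_l$ factor into simpler pieces localised at different maximal ideals of the centre $\mathcal{R}_l$. The resulting contraction algebra then decomposes as a direct sum over the perturbed contraction points.

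Then I would identify each block. For a single isolated length-$i$ model flop, a direct computation using the universal flopping algebra presentation shows that the contraction algebra at the deformed point is isomorphic to $\textnormal{Mat}_{i \times i}(\mathbb{C})$; intuitively this is because the length-$i$ model flop corresponds to a dimension vector $(1,i)$ representation, and its endomorphism algebra in the stable category is forced to be a full matrix algebra of size $i$ once one kills everything factoring through the projective generator. Combining the blocks gives $\bigoplus_i \textnormal{Mat}_{i \times i}(\mathbb{C})^{\oplus n_i}$, where $n_i$ is the number of length-$i$ points that appear, which is precisely the Gopakumar--Vafa interpretation developed by Toda. The dimension identity $\dim A_{\textnormal{con}} = \sum n_i i^2$ then follows from constancy of fibre dimension in a flat family, and the identity $\dim A_{\textnormal{con}}/[A_{\textnormal{con}},A_{\textnormal{con}}] = n_1$ follows by noting that the commutator quotient of $\textnormal{Mat}_{i\times i}(\mathbb{C})$ is zero for $i>1$ and $\mathbb{C}$ for $i=1$, since commutator quotient is also preserved by flat deformation of finite dimensional algebras. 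Finally, the length interpretation is immediate from Theorem \ref{Theorem: Length Classification} once one observes that the block of maximum size corresponds to the original flop being specialised.

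The main obstacle will be the second step, namely the geometric input that a generic deformation of a length-$l$ simple flopping contraction is really a disjoint union of model length-$i$ contractions whose multiplicities are the GV invariants. This requires the smoothing analysis of Katz--Morrison type partial resolutions of Kleinian singularities together with Toda's identification of perverse sheaf counts with GV invariants, and is the technical heart of \cite{HuaToda}; the noncommutative half of the argument, by contrast, reduces to direct manipulation of the presentations in Theorem \ref{Theorem: Explicit presentations}.
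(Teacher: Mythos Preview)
The paper does not prove this proposition; it is quoted from \cite[Theorem 4.2]{HuaToda} and used as a black box, so there is no ``paper's own proof'' to compare against. Your sketch is therefore an attempted independent proof of the Hua--Toda result rather than a reconstruction of anything in this paper.

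That said, your argument contains a concrete error in the derivation of the $n_1$ formula. You claim that the commutator quotient of $\textnormal{Mat}_{i\times i}(\mathbb{C})$ vanishes for $i>1$, but this is false: for every $i\ge 1$ one has $[\textnormal{Mat}_i,\textnormal{Mat}_i]=\mathfrak{sl}_i$ and hence $\textnormal{Mat}_i/[\textnormal{Mat}_i,\textnormal{Mat}_i]\cong\mathbb{C}$ via the trace. Consequently the abelianisation of the semisimple fibre $\bigoplus_i \textnormal{Mat}_{i\times i}(\mathbb{C})^{\oplus n_i}$ has dimension $\sum_i n_i$, not $n_1$. Moreover, the dimension of $A/[A,A]$ is only upper semicontinuous in flat families (the commutator map can drop rank at special fibres), so even a correct computation at the semisimple fibre would only yield an inequality at $A_{\textnormal{con}}$. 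The identity $\dim A_{\textnormal{con}}/[A_{\textnormal{con}},A_{\textnormal{con}}]=n_1$ genuinely requires a separate argument, in Toda's work via the noncommutative width and a direct geometric identification, and cannot be extracted from the deformation picture you describe.

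The first dimension formula $\dim A_{\textnormal{con}}=\sum n_i i^2$ and the existence of a semisimple deformation are closer in spirit to what Hua--Toda do, though the actual input identifying the multiplicities $n_i$ with Gopakumar--Vafa invariants is, as you acknowledge, the substantive geometric step.
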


In particular, Brown and Wemyss have managed to give greater insight into these Gopakumar-Vafa curve invariants using explicit calculations with the contraction algebra \cite{GKVWemyssBrown}.

 The universal flopping algebras produce examples of threefolds flops in such a way that their contraction algebras are already explicitly encoded: for an algebra $A$ produced from a simple threefold flopping contraction by Van den Bergh's construction, see Section \ref{Sec: Translation}, there is a chosen idempotent $e_0 \in A$ such that the contraction algebra is $A_{\textnormal{con}}:= \frac{A}{A e_0 A}$, see \cite[Definition 2.9]{WemyssDonovan}. 

As such, the universal flopping algebra and results in this paper show how explicit examples of threefold flopping contractions with associated contraction algebras can be produced. This supplies a tool that can be used to construct explicit examples of contraction algebras. Calculating the contraction algebra (and a semisimple deformation) associated to an NCCR can verify the length of the corresponding flopping contraction by Proposition \ref{Prop:GKV}.
 
\subsection*{Acknowledgements} The author is supported by EPSRC grant EP/M017516/1. the author would also like to thank Marco Fazzi and Michael Wemyss for interesting discussions and helpful comments.

\section{Kleinian singularities and simultaneous resolutions} \label{Subsection: Kleinian Singularities and deformations}

As discussed in the introduction, to understand Katz and Morrison's classification of simple threefold flops one must work with Kleinian surface singularities, and we set up the required notation in this section.

\subsection{Kleinian singularities and resolutions}
\label{subsection:Kleinian singularities and resolutions}
Kleinian singularities, also known as Du Val or rational double point singularities, are Gorenstein surface singularities. They are classified into types $A_n \, ( n \ge 1), \, D_n \, (n \ge 4), \, E_6, \, E_7$, and $E_8$. An isolated surface singularity is said to be a Kleinian singularity of type $A_n$, $D_n$ or $E_n$ if it analytically isomorphic to the corresponding a hypersurface singularity defined by one of the following equations in $\mathbb{C}[x,y,z]$:\begin{enumerate}
\item Type $A_n$: $f=xy-z^{n+1}$,
\item Type $D_n$: $f=x^2-zy^2-z^{n-1}$,
\item Type $E_6$: $f=x^2+y^3+z^4$,
\item Type $E_7$: $f=x^2+y^3+yz^3$, or
\item Type $E_8$: $f=x^2+y^3+z^5$.
\end{enumerate}
There are corresponding type $A_n, \, D_n, \, E_n$ Dynkin diagrams, and  we let $\Gamma$ denote such a Dynkin diagram. Associated to such a diagram $\Gamma$ is  a Kleinian singularity $X_{\Gamma}=\Spec \, R_{\Gamma}$ and, as $X_{\Gamma}$ is a surface, a minimal resolution $\pi_{\Gamma}: Y_{\Gamma} \rightarrow X_{\Gamma}$ exists and is unique. Moreover, the exceptional divisor of the minimal resolution is a collection of rational curves whose dual graph forms a Dynkin diagram of type $\Gamma$. 

\begin{align*}
\begin{aligned}
\begin{tikzpicture}   
\draw[black, thick] (-0.6,-0.2) to [bend left=25] node[pos=0.48, right] {} (0.6,-0.2);
\draw[black, thick] (0,-0.35) to [bend right=0] node[pos=0.48, right] {} (0,0.8);
\draw[black,thick] (-1,0.3) to [bend left=25] node[pos=0.48, right] {} (-0.3,-0.3);
\draw[black,thick] (1,0.3) to [bend right=25] node[pos=0.48, right] {} (0.3,-0.3);
\draw[black] (0,0) ellipse (1.5 and 1);
\draw[->] (0,-1.1) -- (0,-1.5);
\draw[black] (0,-2) ellipse (0.9 and 0.4);
\filldraw [black] (0,-2) circle (1pt);
\end{tikzpicture}
\end{aligned}
&
\qquad
\begin{aligned}
\begin{tikzpicture}[node distance=1cm, main node/.style={circle,fill=black!100,draw,font=\sffamily\Large\bfseries}]
  \node[main node] (1) at (0,0) {};
  \node[main node] (2) at (-1,0) {};
  \node[main node] (3) at (1,0) {};
  \node[main node] (4)  at (0,1) {};
 \draw [ultra thick] (1) to node {} (2);
 \draw [ultra thick] (1) to node {} (3);
 \draw [ultra thick] (1) to node {} (4);
\end{tikzpicture}
\end{aligned}
\end{align*}
There is a one to one correspondence between exceptional curves in the minimal resolution $Y_{\Gamma} \rightarrow X_{\Gamma}$ and vertices in the corresponding Dynkin diagram $\Gamma$. Associated to a $\circ$/$\bullet$ colouring $C$ of the vertices of $\Gamma$ is a coloured diagram $\Gamma_C$ and a partial resolution $\pi_{\Gamma_C}:Y_{\Gamma_C} \rightarrow X_{\Gamma}$ that contracts the curves corresponding to $\circ$ vertices in $\Gamma_C$.

\begin{align*}
\begin{aligned}
\begin{tikzpicture}   
\draw[black, thick] (-0.6,0.1) to [bend left=25] node[pos=0.48, right] {} (0.6,0.1);
\draw[black, thick] (0.1,0.15) to [bend right=0] node[pos=0.48, right] {} (-0.1,0.35);
\draw[black, thick] (-0.1,0.15) to [bend right=0] node[pos=0.48, right] {} (0.1,0.35);
\draw[black, thick] (-0.4,0.05) to [bend right=0] node[pos=0.48, right] {} (-0.6,0.25);
\draw[black, thick] (-0.6,0.05) to [bend right=0] node[pos=0.48, right] {} (-0.4,0.25);
\draw[black, thick] (0.4,0.05) to [bend right=0] node[pos=0.48, right] {} (0.6,0.25);
\draw[black, thick] (0.6,0.05) to [bend right=0] node[pos=0.48, right] {} (0.4,0.25);
\draw[black] (0,0) ellipse (1.5 and 1);
\draw[->] (0,-1.1) -- (0,-1.5);
\draw[black] (0,-2) ellipse (0.9 and 0.4);
\filldraw [black] (0,-2) circle (1pt);
\end{tikzpicture}
\end{aligned}
&
\qquad
\begin{aligned}
\begin{tikzpicture}[node distance=1cm, main node/.style={circle,fill=black!100,draw,font=\sffamily\Large\bfseries}]
  \node[main node] (1) at (0,0) {};
  \node[circle, draw] (2) at (-1,0) {};
  \node[circle, draw] (3) at (1,0) {};
  \node[circle, draw] (4)  at (0,1) {};
 \draw [ultra thick] (1) to node {} (2);
 \draw [ultra thick] (1) to node {} (3);
 \draw [ultra thick] (1) to node {} (4);
\end{tikzpicture}
\end{aligned}
\end{align*}

\begin{remark} \label{Remark:Group Quotient}
Each singularity $X_\Gamma$ is isomorphic to a quotient singularity $\mathbb{C}^2/G$ for a finite subgroup $G< \SL_2(\mathbb{C})$, or equivalently each $R_{\Gamma}$ is isomorphic to an invariant ring $\mathbb{C}[u,v]^G$. In particular, each $R_\Gamma$ has a $\mathbb{Z}$-grading given by $u,v$ having degree 1, this induces a $\mathbb{C}^*$-action on $X_{\Gamma}$ making $R$ a complete local graded $\mathbb{C}$-algebra, and the partial resolution is $\mathbb{C}^*$-equivariant. In particular, $R_{\Gamma}$ is a complete local graded $\mathbb{C}$-algebra with a unique graded maximal ideal $\mathfrak{m}$. 
\end{remark}
\begin{remark} \label{Remark:grading}
For such a complete local graded $\mathbb{C}$-algebra $R$ there is an induced $\mathbb{C}^*$-action on the (ungraded) completion of $R$ with respect to unique graded maximal ideal $\mathfrak{m}$,  $\widehat{R}$, and the algebra $\widehat{R}$ is a flat $R$ algebra. The graded ring $R$ can be recovered from $\widehat{R}$ as the direct sum of the $\mathbb{C}^*$-action eigenspaces, which are indexed by the eigenvalues corresponding the characters $\mathbb{Z}$ of $\mathbb{C}^*$, see e.g. \cite[Appendix]{Namikawa} for the explicit details of such a construction.
\end{remark}

\subsection{Diagram combinatorics} \label{Sec:Lie Alg Com} We now recall some further combinatorics attached to the Dynkin diagram $\Gamma$. We let $|\Gamma|$ denote an indexing set for the vertices of $\Gamma$, which we label by positive integers. For each Dynkin diagram $\Gamma$ we denote the associated extended Dynkin diagram by $\widetilde{\Gamma}$ and denote the additional vertex by $0$.

The Dynkin diagram $\Gamma$ has an associated semisimple Lie algebra. This defines a vector space $\mathfrak{h}_{\Gamma}$ of dimension the size of $|\Gamma|$, the corresponding Cartan subalgebra.  We let $\mathbb{H}_\Gamma:= \textnormal{sym}^{\bullet}(\mathfrak{h}^\vee)$ denote the corresponding polynomial ring with generators $t_i$, $i \in |\Gamma|$, and we call this the Cartan polynomial algebra. 

For notational reasons, we choose to present $\mathbb{H}_{\Gamma}$ as having generators $t_i$ for $i \in |\widetilde{\Gamma}|$ which satisfy a single linear relation
\[
\sum \omega_i t_i=0
\]
where $\omega_i$ are the dual Coxeter labels for vertices of the Dynkin diagram; the multiplicity occuring at that vertex in the longest root (equivalently they are the dimensions of the corresponding irreducible representations of $G$ under the McKay correspondence.) For completeness we include this labelling for type $A_1$, $D_4$, $E_6$, $E_7$ and $E_8$ Dynkin diagrams below.
\vspace{-1cm}
\begin{center}
\[
\resizebox{275pt}{!}{ 
\begin{tikzpicture}[node distance=1cm, main node/.style={circle,fill=black!100,draw,font=\sffamily\Large\bfseries}]
\node (11Lab) at (0,-0.5) {$1$};
  \node[circle,fill=black!100,draw] (11) at (0,0) {};

    \node[main node] (21) at (3,0) {};
    \node (21Lab) at (3,-0.5) {$2$};
  \node[main node] (22) at (2,0) {};
  \node (22Lab) at (2,0-0.5) {$1$};
  \node[main node] (23) at (4,0) {};
  \node (23Lab) at (4,-0.5) {$1$};
  \node[main node] (24)  at (3,1) {};
  \node (24Lab) at (2.5,1) {$1$};
 \draw [main node] (21) to node {} (22);
 \draw [main node] (21) to node {} (23);
 \draw [main node] (21) to node {} (24);

     \node[main node] (41) at (9,0) {};
           \node (21Lab) at (9,-0.5) {$4$};
      \node[main node] (42'') at (6,0) {};
            \node (21Lab) at (6,-0.5) {$1$};
    \node[main node] (42') at (7,0) {};
          \node (21Lab) at (7,-0.5) {$2$};
  \node[main node] (42) at (8,0) {};
        \node (21Lab) at (8,-0.5) {$3$};
  \node[main node] (43) at (10,0) {};
        \node (21Lab) at (10,-0.5) {$3$};
    \node[main node] (43') at (11,0) {};
          \node (21Lab) at (11,-0.5) {$2$};
  \node[main node] (44)  at (9,1) {};
        \node (21Lab) at (8.5,1) {$2$};
 \draw [ultra thick] (41) to node {} (42);
 \draw [ultra thick] (41) to node {} (43);
 \draw [ultra thick] (41) to node {} (44);
  \draw [ultra thick] (42) to node {} (42');
    \draw [ultra thick] (42') to node {} (42'');
   \draw [ultra thick] (43) to node {} (43');

     \node[main node] (31) at (2,-2.5) {};
      \node (21Lab) at (2,-3) {$3$};
    \node[main node] (32') at (0,-2.5) {};
        \node (21Lab) at (0,-3) {$1$};
  \node[main node] (32) at (1,-2.5) {};
      \node (21Lab) at (1,-3) {$2$};
  \node[main node] (33) at (3,-2.5) {};
      \node (21Lab) at (3,-3) {$2$};
    \node[main node] (33') at (4,-2.5) {};
        \node (21Lab) at (4,-3) {$1$};
  \node[main node] (34)  at (2,-1.5) {};
      \node (21Lab) at (1.5,-1.5) {$2$};
 \draw [ultra thick] (31) to node {} (32);
 \draw [ultra thick] (31) to node {} (33);
 \draw [ultra thick] (31) to node {} (34);
  \draw [ultra thick] (32) to node {} (32');
   \draw [ultra thick] (33) to node {} (33');

   \node[main node] (51) at (9,-2.5) {};
             \node (21Lab) at (9,-3) {$6$};
        \node[main node] (52''') at (5,-2.5) {};
                  \node (21Lab) at (5,-3) {$2$};
      \node[main node] (52'') at (6,-2.5) {};
                \node (21Lab) at (6,-3) {$3$};
    \node[main node] (52') at (7,-2.5) {};
              \node (21Lab) at (7,-3) {$4$};
  \node[main node] (52) at (8,-2.5) {};
            \node (21Lab) at (8,-3) {$5$};
  \node[main node] (53) at (10,-2.5) {};
            \node (21Lab) at (10,-3) {$4$};
    \node[main node] (53') at (11,-2.5) {};
              \node (21Lab) at (11,-3) {$2$};
  \node[main node] (54)  at (9,-1.5) {};
                \node (21Lab) at (8.5,-1.5) {$3$};
 \draw [ultra thick] (51) to node {} (52);
 \draw [ultra thick] (51) to node {} (53);
 \draw [ultra thick] (51) to node {} (54);
  \draw [ultra thick] (52) to node {} (52');
    \draw [ultra thick] (52') to node {} (52'');
        \draw [ultra thick] (52'') to node {} (52''');
   \draw [ultra thick] (53) to node {} (53');

\end{tikzpicture}
}
\]
\end{center}
\vspace{-0.1cm}
The additional vertex in the extended Dynkin diagram is always labelled by 1. We also choose to realise $\mathbb{H}_{\Gamma}$ as a graded algebra with each generator $t_i$ in degree 2.

The action of the Weyl group $W_{\Gamma}$ on the Cartan $\mathfrak{h}_{\Gamma}$ defines an action of $W_{\Gamma}$ on $\mathbb{H}_{\Gamma}$. The Weyl group is the Coxeter group associated to the diagram $\Gamma$, with generators $s_i$ for $i \in |\Gamma|$. An action of $W_{\Gamma}$ on $\mathbb{H}_{\Gamma}$ is defined by
\[
s_i(t_j):= \left\{ \begin{array}{c c} 
-t_i & \text{if $i=j$} \\
t_j+kt_i & \text{if vertices $i$ and $j$ joined by $k$ edges} \\
t_j & \text{otherwise}
\end{array} \right.
\]
where we consider vertices and edges in the extended Dynkin diagram $\widetilde{\Gamma}$. A coloured Dynkin diagram $\Gamma_C$ defines a subgroup $W_{C} \subset W_{\Gamma}$ generated by the simple reflections $s_i$ corresponding to uncoloured vertices $\circ$ in $C$, and this also acts on $\mathbb{H}_{\Gamma}$. 

\subsection{Simultaneous partial resolutions of Kleinian singularities} \label{Subsection:Deformations of Kleinian singularities and resolutions}
In order to recall the simultaneous resolutions that determine the universal flop of length $l$, we briefly recap the deformation theory of Kleinian surface singularities and their partial resolutions as developed by Brieskorn \cite{Brieskorn66,Brieskorn68,Brieskorn70} and Tjurina \cite{Tjurina70}. We refer the reader to the book of Slodowy \cite{Slodowy} for a readable and more complete discussion of these topics.

Firstly, versal deformations of the schemes $X_{\Gamma}$, $Y_{\Gamma}$ and $Y_{\Gamma_C}$ defined in the previous section exist, and we denote these versal deformations by the flat morphisms
\[
\mathcal{X}_{\Gamma} \rightarrow  \textnormal{Def}_{\Gamma}, \quad
\mathcal{Y}_{\Gamma} \rightarrow  \textnormal{Res}_{\Gamma}, \quad \text{and} \quad
\mathcal{Y}_{\Gamma_C} \rightarrow  \textnormal{PRes}_{\Gamma_C}
\]
respectively. As versal deformations, any formal deformation of $X_{\Gamma}, \, Y_{\Gamma},$ or $Y_{\Gamma_C}$ has a (non-unique) morphism to the corresponding versal deformation with a uniquely defined differential between the Zariski tangent spaces of the bases. 

What we are choosing to call versal deformations in this paper are called semi-universal deformations in \cite{KatzMorrison} and \cite{Slodowy}; the additional condition we include is the uniqueness of the differential induced between Zariski tangent spaces at a chosen closed point.

The definition of a deformation involves a chosen closed point in the base identifying the fibre being deformed. The $\mathbb{C}^*$-action on $X_\Gamma$ recalled in Remark \ref{Remark:Group Quotient} induces $\mathbb{C}^*$-actions on all the versal deformations, their bases, and the morphisms between them and there is a unique $\mathbb{C}^*$-equivariant closed point in the base, which is the chosen point. In particular, all varieties and morphisms occurring in a universal flop are $\mathbb{C}^*$-equivariant.  A $\mathbb{C}^*$-versal deformation is versal for both deformations with $\mathbb{C}^*$-actions and arbitrary deformations, see \cite{Pinkham}, and so we implicitly always consider $\mathbb{C}^*$-versal deformations where possible.

Secondly, we recall the relationship between $\textnormal{Def}_{\Gamma}, \, \textnormal{Res}_{\Gamma},$ and $\textnormal{PRes}_{\Gamma_C}$. As noted above, a type $\Gamma$ Dynkin diagram has an associated Cartan $\mathfrak{h}_{\Gamma}$ and Weyl group $W_{\Gamma}$, and a coloured diagram $\Gamma_C$ defines a subgroup $W_{C}$ corresponding to the Weyl group associated to the $\circ$ vertices of the Dynkin diagram. Then the bases of these versal deformations can be realised as
\[
\textnormal{Def}_{\Gamma}\cong \mathfrak{h}_{\Gamma}/W_{\Gamma}, \quad
\textnormal{Res}_{\Gamma} \cong \mathfrak{h}_{\Gamma}, \quad \text{and} \quad
\textnormal{PRes}_{\Gamma_C} \cong  \mathfrak{h}_{\Gamma}/W_{C}
\]
and these are related by the obvious quotient maps. As $\mathfrak{h}_{\Gamma}$ is affine space and the Weyl group actions are generated by reflections all these bases are isomorphic to affine space, although with different associated $\mathbb{C}^*$-actions induced from that on $\mathfrak{h}_l$. Further, these quotient maps induce projective morphisms $\mathcal{Y}_\Gamma \rightarrow \mathcal{X}_{\Gamma} \times_{\mathfrak{h}_{\Gamma}/W_{\Gamma}} \mathfrak{h}_{\Gamma}$ and $\mathcal{Y}_{\Gamma_C}  \rightarrow \mathcal{X}_{\Gamma} \times_{\mathfrak{h}_{\Gamma}/W_{\Gamma}} \mathfrak{h}_{\Gamma}/W_{C}$.

Having recalled the existence of these deformations, a \emph{simultaneous partial resolution} of the flat morphism $\mathcal{X}_{\Gamma} \rightarrow \mathfrak{h}_{\Gamma}/W_{\Gamma}$ over $\mathfrak{h}_{\Gamma}/W_{C}$ consists of a commutative diagram
\[
\begin{tikzpicture} [bend angle=0]
\node (X) at (-2,0)  {$\mathfrak{h}_{\Gamma}/W_{C}$};
\node (X') at (2,0)  {$\mathfrak{h}_{\Gamma}/W_{\Gamma}$};
\node (X1) at (-2,1.5)  {$\mathcal{Y}$};
\node (X2) at (2,1.5)  {$\mathcal{X}_{\Gamma}$};
\draw [->] (X1) to node[above]  {$\scriptstyle{\phi}$} (X2);
\draw [->] (X1) to node[left]  {$\scriptstyle{\theta}$} (X);
\draw [->] (X2) to node[left]  {$\scriptstyle{}$} (X');
\draw [->] (X) to node[above]  {$\scriptstyle{\psi}$} (X');
\end{tikzpicture}
\]
where $\theta$ is flat, $\psi$ is the quotient morphism, $\phi$ is projective, and for all closed points $p \in \mathfrak{h}_{\Gamma}/W_{C}$ the map on fibres ${\mathcal{Y}}_p \rightarrow {\mathcal{X}_{\Gamma}}_p$ is a partial resolution of the Kleinian singularity ${\mathcal{X}_{\Gamma}}_p$ dominated by the minimal resolution. 

If we let $\mathcal{X}':= \mathcal{X}_{\Gamma} \times_{\mathfrak{h}_{\Gamma}/W_{\Gamma}} \mathfrak{h}_{\Gamma}/W_{C}$, then is is clear that by the Cartesian property that such a simultaneous resolution determines morphisms
\[
\mathcal{Y} \xrightarrow{\pi} \mathcal{X}' \xrightarrow{\alpha} \mathfrak{h}_{\Gamma}/W_{C},
\] 
such that $\pi$ is projective and $\alpha$ and $\alpha \circ \pi$ are flat. Vice versa, such a flat morphism $\mathcal{Y} \rightarrow \mathfrak{h}_{\Gamma}/W_{C}$ factoring through $\mathcal{X}'$ by a projective morphism determines a simultaneous resolution. Indeed, the versal deformation of $\mathcal{Y}_{\Gamma_C} \rightarrow \mathfrak{h}_{\Gamma}/W_{C}$ of $Y_{\Gamma_C}$ factors through $\mathcal{X}'$ and so provides a simultaneous partial resolution of $\mathcal{X}_{\Gamma} \rightarrow \mathfrak{h}_{\Gamma}/W_{\Gamma}$. 

Simultaneous partial resolutions are not unique, but any simultaneous partial resolution is determined by the blow up of a element of the class group of $\mathcal{X}'$, see \cite[3.1]{Brieskorn68}, and there are a finite number enumerated by Pinkham's result \cite[Theorem 3]{PinkhamConstruction}.

In fact, for partial resolutions $\mathcal{Y}_{\Gamma_C}$ with a single $\bullet$ curve in $\Gamma_C$, such as those in the length classification, Pinkham's result \cite[Theorem 3]{PinkhamConstruction} shows that there are exactly two simultaneous resolutions. As such, associated to the diagram $\Gamma_C$ of length $l$ occurring in Theorem \ref{Theorem: Length Classification} are a pair of simultaneous resolutions producing morphisms 
\begin{align*}
\begin{aligned}
\begin{tikzpicture} [bend angle=0]
\node (Y) at (-1.25,1.5)  {$\mathcal{Y}_l$};
\node (Y+) at (1.25,1.5)  {$\mathcal{Y}_l^{+}$};
\node (X) at (0,0)  {$\mathcal{X}_l$};
\node (H) at (0,-1.5)  {$\mathfrak{h}_l$};
\draw [->] (Y) to node[below left, pos=0.4]  {$\pi_l$} (X);
\draw [->] (Y+) to node[ below right, pos=0.3]  {$\pi_l^{+}$} (X);
\draw [->] (X) to node[ below right, pos=0.3]  {$\alpha$} (H);
\end{tikzpicture}
\end{aligned}
\qquad 
\begin{aligned}
&\text{where:} \\
&\text{(1) $\alpha$, $\alpha \circ \pi_l$, and $\alpha \circ \pi_l^+$ are flat morphisms,} \\
&\text{(2) $\mathcal{X}_l:=\mathcal{X}_{\Gamma} \times_{\mathfrak{h}_{\Gamma}/W_{\Gamma}} \mathfrak{h}_{\Gamma}/W_{C}$,} \\
&\text{(3) Both $\mathcal{Y}_l$ and $\mathcal{Y}_l^{+}$ are isomorphic to $\mathcal{Y}_{\Gamma_C}$ as $\mathbb{C}$-schemes,} \\
&\text{(4) $\mathfrak{h}_l \cong \mathfrak{h}_{\Gamma}/W_{C}$,} \\
&\text{(5) $\mathcal{X}_l =:\Spec \, \mathcal{R}_l$ and $\mathfrak{h}_l =: \Spec \, \mathbb{H}_l$ are affine.}
\end{aligned}
\end{align*}
The varieties $\mathcal{Y}_l^{(+)}$ and $\mathcal{X}_l$ are higher dimensional varieties, and the fibres of $\pi_l$ and $\pi_l^+$ are partial resolutions of Kleinian surface singularities: i.e. the fibres of $\alpha$ are morphisms of two dimensional varieties.

In particular, we call this pair of simultaneous resolutions $\pi_l^{(+)}: \mathcal{Y}_l^{(+)} \rightarrow \mathcal{X}_l$ the \emph{universal flop of length $l$}. Katz and Morrison use this deformation theory in the classification of simple threefold flops, \cite{KatzMorrison}, as discussed in the introduction.

\section{Construction of the universal flopping algebras} \label{Section: Noncommutative algebra construction}

In this section we recall the techniques we will require to translate from geometry to noncommutative algebra and construct the universal flopping algebras.

\subsection{Translation to noncommutative algebra} \label{Sec: Translation}
We translate the classification and construction of simple threefold flops by length from algebraic geometry into noncommutative algebra using general techniques introduced by Van den Bergh in \cite{3Dflops}. These techniques work under the following assumption, which holds for partial resolutions of Kleinian singularies, simple threefold flopping contractions, and the simultaneous partial resolutions of Kleinian singularities considered in Section \ref{Subsection: Kleinian Singularities and deformations}. 

\begin{assumption} \label{ass:fibres}
We suppose that $\pi:Y \rightarrow X \cong \Spec \, R$ is a projective morphism of Noetherian schemes over $\mathbb{C}$ such that 
\begin{enumerate}
\item The fibres of $\pi$ have dimension $\le 1$, and
\item $\mathbf{R}\pi_* \mathcal{O}_Y \cong \mathcal{O}_X$.
\end{enumerate}
\end{assumption}
In this situation the following abelian categories were introduced by Bridgeland \cite{BridgelandFlops} and interpreted more explicitly as module categories by Van den Bergh.

\begin{definition}[{\cite[Section 3]{3Dflops}}] \label{Perverse} Define $\mathfrak{C}$ to be the abelian subcategory of $\coh \, Y$ consisting of $\mathcal{F} \in \coh\, Y$ such that $\mathbb{R}\pi_*\mathcal{F} \cong 0$. For $i=0,1$ the abelian category $^{-i}\Per(Y/X)$ is defined to contain $\mathcal{E} \in D^b(Y)$ which satisfy the following conditions:
\begin{enumerate}
\item The only non-vanishing cohomology of $\E$ lies in degrees $-1$ and $0$.
\item $\pi_*\mathcal{H}^{-1}(\E)=0$ and  $\mathbb{R}^1 \pi_* \mathcal{H}^0(\E)=0$, where $\mathcal{H}^j$ denotes taking the $j^{th}$ cohomology sheaf.
\item For $i=0$, $\Hom_Y(C,\mathcal{H}^{-1}(\E))=0$ for all $C \in \mathfrak{C}$.
\item For $i=1$, $\Hom_Y(\mathcal{H}^0(\E),C)=0$ for all $C \in \mathfrak{C}$.
\end{enumerate}
\end{definition}
It was shown by Van den Bergh that these abelian categories have projective generators so can be interpreted as module categories for noncommutative algebras.

\begin{theorem}[\cite{3Dflops}] \label{Theorem:Van den Bergh Tilting} For $\pi:Y \rightarrow X$ satisfying Assumptions \ref{ass:fibres}:
\begin{enumerate}
\item There is a vector bundle $T=\mathcal{O}_Y \oplus T'$ on $Y$ that is a projective generator of ${^{0}\Per(Y/X)}$, so
\[
{^{0}\Per(Y/X)} \cong A\textnormal{-mod}
\]
where $A:= \End_Y(T)^{\op}$.
\item The vector bundle $T^{\vee}=\mathcal{O}_Y \oplus T'^{\vee}$ on $Y$ is a projective generator of ${^{-1}\Per(Y/X)}$, so
\[
{^{-1}\Per(Y/X)} \cong A^{\op}\textnormal{-mod}
\]
where $A^{\op}:= \End_Y(T^{\vee})^{\op} \cong \End_Y(T)$.
\item There are derived equivalences 
\[
D^b(A\textnormal{-mod}) \cong D^b( \coh \, Y) \cong D^b(A^{\op} \textnormal{-mod}).
\]
\end{enumerate}
\end{theorem}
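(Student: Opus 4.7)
The plan is to follow Van den Bergh \cite{3Dflops}: construct a specific vector bundle $T = \mathcal{O}_Y \oplus T'$ on $Y$, show it is a tilting bundle lying in $^{0}\Per(Y/X)$ and generating $D^b(\coh Y)$, apply standard tilting theory to identify $^{0}\Per(Y/X)$ with $A\textnormal{-mod}$ for $A = \End_Y(T)^{\op}$, and finally dualise to treat $^{-1}\Per(Y/X)$.

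For the construction, enumerate the irreducible components $\{C_i\}_{i \in I}$ of the reduced exceptional fibres of $\pi$. Since fibres have dimension at most one, the relative Picard group admits a basis indexed by these curves, and one chooses line bundles $\mathcal{L}_i$ with intersection numbers $\mathcal{L}_i \cdot C_j = \delta_{ij}$. For each $i$ one builds a summand $M_i$ from $\mathcal{L}_i$ and copies of $\mathcal{O}_Y$ via an extension (equivalently a cokernel of an evaluation map of global sections after suitable twisting by $\pi^*$ of an ample line bundle on $X$), with the precise recipe chosen so that $M_i$ lies in $^{0}\Per(Y/X)$. Setting $T' := \bigoplus_i M_i$ and $T := \mathcal{O}_Y \oplus T'$, three properties must then be verified: that $T \in {^{0}\Per(Y/X)}$, which follows by analysing the cohomology sheaves of the defining extensions against Definition \ref{Perverse}; that $\Ext^{k}_Y(T,T) = 0$ for $k > 0$, which via the defining sequences, the projection formula, and the hypothesis $\mathbf{R}\pi_* \mathcal{O}_Y = \mathcal{O}_X$ reduces to a cohomology calculation for line bundles on the exceptional fibre; and that $T$ generates $D^b(\coh Y)$, checked by exhibiting the simples of $^{0}\Per(Y/X)$ as iterated extensions built from summands of $T$, equivalently that the right orthogonal of $T$ in $D^b(\coh Y)$ is zero. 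Given these, Bondal--Van den Bergh's tilting theorem identifies $^{0}\Per(Y/X)$ with $A\textnormal{-mod}$ abelian-categorically, and Keller's derived version upgrades this to the derived equivalence in (3).

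For (2), apply the same reasoning to $T^\vee = \mathcal{O}_Y \oplus \bigoplus_i M_i^\vee$. Dualisation swaps the orthogonality conditions on $\mathcal{H}^0$ versus $\mathcal{H}^{-1}$ in Definition \ref{Perverse}, so $T^\vee$ lies in $^{-1}\Per(Y/X)$ rather than $^{0}\Per(Y/X)$; the Ext-vanishing and generation properties transfer under the exact autoequivalence $T \mapsto T^\vee$ on the full subcategory of vector bundles, and the canonical identification $\End_Y(T^\vee) \cong \End_Y(T)^{\op}$ gives $A^{\op}$ as the endomorphism algebra. Part (3) is then obtained by composing the two derived equivalences through their common middle term $D^b(\coh Y)$.

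The main obstacle is the Ext-vanishing step together with the construction of the $M_i$ themselves: controlling cohomology on a possibly non-reduced, reducible one-dimensional exceptional fibre is the technical heart of \cite{3Dflops} and is handled there by a local-to-global argument using formal completion along the exceptional fibre combined with the projection formula, reducing the problem to a finite linear-algebra computation once a dual basis $\{\mathcal{L}_i\}$ to $\{C_i\}$ is fixed. The construction of each $M_i$ requires similar care, since one must ensure \emph{simultaneously} that $M_i$ lies in the correct perverse heart, that its dual lies in the opposite heart, and that together with $\mathcal{O}_Y$ the resulting $T$ is a generator of the whole derived category.
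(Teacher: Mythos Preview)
The paper does not prove this theorem at all: it is stated with citation \cite{3Dflops} and used as a black box, with no proof given in the paper itself. Your proposal is a reasonable outline of Van den Bergh's original argument in \cite{3Dflops}, which is exactly the intended reference, so in that sense there is nothing to compare against and your sketch is appropriate.
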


\begin{remark} \label{Remark:Flat Base Change} We remark that the construction of ${^{-i}\Per(Y/X)}$ commutes with flat base change in $X$, \cite[Proposition 3.1.4.]{3Dflops}.
\end{remark}

\begin{remark} \label{Remark:KrullSchmidt}
In general an algebra $A$ produced by Theorem \ref{Theorem:Van den Bergh Tilting} is only defined up to Morita equivalence; it is the abelian category $A$-$\modcat \cong {^0}\Per(Y/X)$ that is determined up to equivalence. However, when the category of finitely generated projective $A$-modules has the Krull-Schmidt property then we can ask for $A$ to be a basic copy of the algebra; i.e. ask $T$ to be a basic projective generator of ${^0}\Per(Y/X)$ so $A$ decomposes into one copy of each indecomposable projective module up to isomorphism.

For example, when $R$ is a complete local $\mathbb{C}$-algebra with closed point $p \in X  \cong \Spec R$ corresponding to maximal ideal $\mathfrak{m}_p$ such that $R/\mathfrak{m}_p \cong \mathbb{C}$, then the category of projective $A$-modules has the Krull-Schmidt property. In particular, the non-free indecomposable projective modules are in one to one correspondence with the irreducible curves in $\pi^{-1}(p)$, see \cite[Section 3.4]{3Dflops}. 

As in Remark \ref{Remark:grading}, this also holds in the graded setting. When $R$ is a complete local graded $\mathbb{C}$-algebra, $\pi$ is $\mathbb{C}^*$-equivariant, and the projective generator $T$ has a $\mathbb{C}^*$-linearisation then we can also ask for a basic graded algebra $A$. This can be seen by completing the algebra $R$ with respect to the unique graded maximal ideal, realising the basic projective generator $\widehat{T}$ in the complete setting, and then noting there is a corresponding projective generator $T$ such that $T \otimes_R \widehat{R} \cong \widehat{T}$, see the techniques of \cite[Prop A.6]{Namikawa}. Then $A:=\End_Y(T)^{\op}$. 

As in Remark \ref{Remark:grading}, this also holds in the graded setting. When $R$ is a complete local graded $\mathbb{C}$-algebra, $\pi$ is $\mathbb{C}^*$-equivariant, and the projective generator $T$ has a $\mathbb{C}^*$-linearisation then we can also ask for a basic graded algebra $A$. This can be seen by completing the algebra $R$ with respect to the unique graded maximal ideal, realising the basic projective generator $\widehat{T}$ in the complete setting, and then noting there is a corresponding projective generator $T$ such that $T \otimes_R \widehat{R} \cong \widehat{T}$, see the techniques of \cite[Prop A.6]{Namikawa}. Then $A:=\End_Y(T)^{\op}$. 

\end{remark}

For a flopping contraction with 1-dimension fibres (see \cite[Section 4.4]{3Dflops} for the conditions required to be a flopping contraction) there is a further corollary.

\begin{corollary}[{\cite[Theorem 4.4.2]{3Dflops}}]\label{Corollary: derived flops}
If in addition the map $\pi:Y \rightarrow X$ is a flopping contraction with 1-dimensional fibres with flop $\pi^{+}:Y^{+} \rightarrow X$ then ${^{-1}\Per(Y/X)} \cong {^{0}\Per(Y^{+}/X)}$. In particular
$D^b(Y) \cong D^b(Y^+)$.
\end{corollary}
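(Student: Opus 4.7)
The plan is to apply Theorem~\ref{Theorem:Van den Bergh Tilting} to both flopping contractions $\pi\colon Y \to X$ and $\pi^{+}\colon Y^{+} \to X$ and then to identify the two resulting algebras, reducing the statement to a Morita-type comparison. Since $\pi^{+}$ is itself a flopping contraction satisfying Assumption~\ref{ass:fibres}, part (1) of the theorem produces a projective generator $T^{+} = \mathcal{O}_{Y^{+}} \oplus T'^{+}$ of ${^{0}}\Per(Y^{+}/X)$ together with an equivalence ${^{0}}\Per(Y^{+}/X) \cong B\text{-mod}$, where $B := \End_{Y^{+}}(T^{+})^{\op}$. Part (2) of the same theorem applied to $\pi$ simultaneously gives ${^{-1}}\Per(Y/X) \cong A^{\op}\text{-mod}$, where $A = \End_{Y}(T)^{\op}$. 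It therefore suffices to exhibit an isomorphism of algebras $B \cong A^{\op}$, which amounts to identifying the non-trivial summand of the projective generator on each side of the flop.

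Setting $N := \pi_{\ast} T'$ and $N^{+} := \pi^{+}_{\ast} T'^{+}$, my first step is to verify that these are reflexive (in fact maximal Cohen--Macaulay) $R$-modules and that the canonical identifications $A^{\op} \cong \End_{R}(R \oplus N)$ and $B \cong \End_{R}(R \oplus N^{+})^{\op}$ hold; this follows from projectivity of $\pi$ and $\pi^{+}$, Assumption~\ref{ass:fibres}(2), and the affineness of $X$. The decisive step, and the one I expect to be the main obstacle, is to identify $N^{+}$ with the $R$-dual $N^{\ast} := \Hom_{R}(N, R)$. Granting this identification, the standard equality $\End_{R}(M^{\ast}) \cong \End_{R}(M)^{\op}$ for reflexive $M$, together with the fact that duality fixes the free summand $R$, yields $B \cong \End_{R}(R \oplus N) \cong A^{\op}$.

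To establish $N^{+} \cong N^{\ast}$, my approach would be to characterise the flop intrinsically: following Bridgeland and Van den Bergh, $Y^{+}$ is the unique small modification over $X$ whose exceptional curves represent the simple objects of ${^{-1}}\Per(Y/X)$ other than $\mathcal{O}_{Y}[1]$. Under the equivalence ${^{-1}}\Per(Y/X) \cong A^{\op}\text{-mod}$, these simple objects correspond to the simple $A^{\op}$-modules supported at the exceptional locus, and realising $Y^{+}$ as the moduli space of such $A^{\op}$-modules (with the universal family serving as a projective generator of ${^{0}}\Per(Y^{+}/X)$) produces precisely the required isomorphism $B \cong A^{\op}$. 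Assembling the pieces, one obtains the chain
\[
{^{-1}}\Per(Y/X) \cong A^{\op}\text{-mod} \cong B\text{-mod} \cong {^{0}}\Per(Y^{+}/X),
\]
and Theorem~\ref{Theorem:Van den Bergh Tilting}(3) applied to each side then yields
\[
D^{b}(Y) \cong D^{b}(A\text{-mod}) \cong D^{b}(A^{\op}\text{-mod}) \cong D^{b}(Y^{+}),
\]
which is the derived equivalence as a direct consequence.
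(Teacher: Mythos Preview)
The paper does not supply its own proof of this corollary: it is stated with a direct citation to \cite[Theorem 4.4.2]{3Dflops} and no argument is given. So there is no proof in the paper to compare against, only the reference to Van den Bergh's original argument.

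Your sketch correctly identifies the architecture of Van den Bergh's proof: reduce to an isomorphism $B \cong A^{\op}$ of the two tilting algebras, and deduce the abelian and derived equivalences from Theorem~\ref{Theorem:Van den Bergh Tilting}. The identifications $A^{\op} \cong \End_R(R\oplus N)$ and $B \cong \End_R(R\oplus N^{+})^{\op}$ via pushforward are exactly as in \cite[Proposition~3.2.10]{3Dflops}, and the reduction to $N^{+}\cong N^{\ast}$ is the right target.

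Where your argument becomes thin is precisely at that ``decisive step''. Your proposed justification---characterise $Y^{+}$ as the moduli of simple objects in ${^{-1}}\Per(Y/X)$ and read off the projective generator from the universal family---is not an independent argument but is essentially Bridgeland's construction of the flop, which is what underlies the cited theorem in the first place. Invoking it here is legitimate as a citation, but it does not constitute a self-contained proof; in effect you are re-deriving the corollary from the same input Van den Bergh uses. If you want a genuinely independent route, you would need to prove directly (e.g.\ via the explicit description of $T'$ in terms of globally generated line bundles in \cite[\S3.5]{3Dflops} and the behaviour of these under the flop) that $\pi^{+}_{\ast}T'^{+}$ is the $R$-dual of $\pi_{\ast}T'$, without appealing to the moduli description of $Y^{+}$.
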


Van den Bergh's results provide an algebraic interpretation of Bridgeland's proof that both sides of a threefold flop are derived equivalent, \cite{BridgelandFlops}, by showing that the abelian categories of perverse sheaves considered by Bridgeland can be reinterpreted as module categories of noncommutative algebras. In addition, Bridgeland constructed both sides of the flop as moduli of stable objects in two t-structures, and in this noncommutative algebra setting this moduli construction can be translated to the moduli of 0-generated modules which we discuss next.

\subsection{Moduli construction} \label{Sec:Moduli}
We now recall a moduli construction that can recover the geometry $\pi: Y \rightarrow X$ from an algebra $A$ produced in Theorem \ref{Theorem:Van den Bergh Tilting}.

The algebra $A:=\End_{Y}(T)^{\op}$ comes equipped with a chosen idempotent $e_0 \in A$ correponding to the trivial summand in the chosen splitting $T=\mathcal{O}_Y \oplus T'$. Then a finite dimensional left $A$-module $V$ splits into two vector spaces $V_0:=e_0V$ and $V_1:=(1-e_0)V$, and we say the dimension of such a module is $(\dim V_0, \dim V_1)$. An $A$-module of dimension $(1, \rk T')$ is 0-generated if the action of $A$ on $V_0$ generates the whole of $V$. We let $\mathcal{M}_A$ denote the fine moduli space of 0-generated $A$-modules with dimension vector $(1,\rk T')$. Such a moduli space can be constructed via King's GIT construction for algebras presented as quivers with relations, \cite{KingQGIT}, or see Van den Bergh's moduli construction \cite[Section 6.2]{NCCRs} for an alternate version. 

Indeed, when $Y$ is a crepant resolution of $X$ the algebras $A$ and $A^{\op}$ are noncommutative crepant resolutions (NCCRs) of $R$ in the sense of \cite{NCCRs}, see \cite[Corollary 3.2.11]{3Dflops}, and Van den Bergh has shown that for any generic stability condition the moduli of representations of an NCCR $A$ in dimension 3 constructs a crepant resolution of $R$, see \cite[Theorem 6.3.1]{NCCRs}, although this may not be $Y$ itself.

The following result applies in the setting of Assumption \ref{ass:fibres}, where there is no requirement of $Y$ being crepant or a resolution, and shows that it is $Y$ itself that is recovered as $\mathcal{M}_A$ for the algebra $A$ produced by Theorem \ref{Theorem:Van den Bergh Tilting}.

\begin{proposition}[{\cite[Corollary 5.2.4]{Karmazyn2017}}]  \label{proposition: Moduli space construction}
In the notation of Theorem \ref{Theorem:Van den Bergh Tilting}, there is an isomorphism $Y \cong \mathcal{M}_A$, and the tautological object under this moduli construction is $T^{\vee}$.
\end{proposition}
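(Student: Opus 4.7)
The plan is to construct a classifying morphism $f : Y \to \mathcal{M}_A$ by realising $T^{\vee}$ as a flat family of $0$-generated $A$-modules on $Y$, and then to show $f$ is an isomorphism using the derived equivalence of Theorem \ref{Theorem:Van den Bergh Tilting}.

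First I would equip $T^{\vee}$ with a left $A$-module structure: since $A = \End_Y(T)^{\op}$, the precomposition rule $\phi \cdot \psi := \psi \circ \phi$ makes $T^{\vee} = \mathcal{H}om_Y(T, \mathcal{O}_Y)$ a left $A$-module. The decomposition $T = \mathcal{O}_Y \oplus T'$ attached to $e_0$ gives $e_0 T^{\vee} \cong \mathcal{O}_Y$ and $(1-e_0)T^{\vee} \cong T'^{\vee}$, so each fibre $T^{\vee}|_y$ has dimension vector $(1, \rk T')$. The canonical generator $\psi_0 \in e_0 T^{\vee}$, namely the projection $T \twoheadrightarrow \mathcal{O}_Y$, satisfies $A \cdot \psi_0 = T^{\vee}$ globally because the map $\phi \mapsto \psi_0 \circ \phi$ is identified with the projection $\End_Y(T) = T^{\vee} \oplus \mathcal{H}om_Y(T, T') \twoheadrightarrow T^{\vee}$; restricting to $y$ shows that each $T^{\vee}|_y$ is $0$-generated. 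This exhibits $T^{\vee}$ as a flat family of $0$-generated dimension $(1, \rk T')$ left $A$-modules on $Y$ and thus defines $f$.

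The main obstacle is then showing $f$ is an isomorphism, for which I would reduce to a bijection on closed points together with a tangent space comparison. For bijectivity, I would use the derived equivalence $D^b(A\textnormal{-mod}) \cong D^b(\coh Y)$ of Theorem \ref{Theorem:Van den Bergh Tilting}: a $0$-generated dimension $(1, \rk T')$ module $M$ is simple ($\End_A(M) = \mathbb{C}$) and goes under the equivalence to an object $\mathcal{F}_M$ in the perverse heart whose image under $\mathbf{R}\pi_*$ is concentrated at a single closed point of $X$; the perverse heart condition combined with the dimension vector then forces $\mathcal{F}_M \cong \mathcal{O}_y$ for a unique $y \in Y$, while conversely each $\mathcal{O}_y$ corresponds to $T^{\vee}|_y$. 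To upgrade this bijection on points to a scheme-theoretic isomorphism I would compare tangent spaces: $T_{[T^{\vee}|_y]}\mathcal{M}_A = \Ext^1_A(T^{\vee}|_y, T^{\vee}|_y)$, which via the derived equivalence is identified with $\Ext^1_Y(\mathcal{O}_y, \mathcal{O}_y) = T_yY$, and this identification is induced by the differential of $f$.

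The technically hardest step is matching the $A$-modules $T^{\vee}|_y$ with all closed points of $\mathcal{M}_A$ along the exceptional locus of $\pi$, where the perverse heart entangles skyscraper sheaves with shifted sheaves $\mathcal{O}_C(-1)[1]$ on exceptional curves. I would handle this by reducing to the complete local case at a closed point $p \in X$ via the flat base change property of $^0\Per(Y/X)$ in Remark \ref{Remark:Flat Base Change} together with the basic Krull--Schmidt form of $A$ from Remark \ref{Remark:KrullSchmidt}, and then analysing the quiver with relations presentation directly to identify the closed fibre of $f$ over $p$ with $\pi^{-1}(p)$.
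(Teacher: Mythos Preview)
The paper does not prove this proposition; it is quoted directly from \cite[Corollary~5.2.4]{Karmazyn2017} without argument, so there is no proof in the present paper to compare your sketch against.

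On your sketch itself: the overall strategy---build the classifying map from the family $T^{\vee}$, then argue it is an isomorphism via the derived equivalence and a tangent-space comparison---is the standard one and is in the spirit of the cited reference. One correction: a $0$-generated module of dimension $(1,\rk T')$ need not be \emph{simple}, since it can have proper submodules of dimension $(0,k)$; what you actually have (and need) is that such a module is \emph{stable} for the associated King stability parameter, whence $\End_A(M)=\mathbb{C}$ by Schur's lemma for stable objects. The step you flag as hardest---showing that every $0$-generated module of the correct dimension corresponds to some $\mathcal{O}_y$ under the equivalence---is indeed where the content lies. Your proposed reduction to the complete local case followed by a direct quiver analysis is a workable route, but as written you have not indicated how that analysis actually runs over the exceptional fibre, so this remains a gap in the outline rather than a completed proof.
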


This has a further corollary when $\pi:Y \rightarrow X$ is a flopping contraction, where we recall that ${^{-1}\Per(Y/X)} \cong {^{0}\Per(Y^+/X)}$, so $A^+$ is Morita equivalence to $A^{\op}$. 

\begin{corollary}{\cite[Corollary 5.3.2]{Karmazyn2017}}\label{Corollary: Moduli space flops}
If in addition the map $\pi:Y \rightarrow X$ is a flopping contraction then $\mathcal{M}_{A^{\op}} \cong \mathcal{M}_{A^{+}} \cong Y^{+}.$
\end{corollary}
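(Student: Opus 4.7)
The strategy is to apply the moduli construction of Proposition~\ref{proposition: Moduli space construction} directly to the flop side $\pi^+\colon Y^+ \to X$, and then transfer the result to the algebra $A^{\op}$ via the derived flop equivalence of Corollary~\ref{Corollary: derived flops}.

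First I would observe that $\pi^+\colon Y^+ \to X$ is itself a flopping contraction with one-dimensional fibres (its flop being the original $\pi$), so Assumption~\ref{ass:fibres} applies. Theorem~\ref{Theorem:Van den Bergh Tilting} then produces a projective generator $T^+ = \mathcal{O}_{Y^+} \oplus T^{+\prime}$ of $^0\Per(Y^+/X)$ together with an algebra $A^+ := \End_{Y^+}(T^+)^{\op}$ carrying a distinguished idempotent $e_0^+$ corresponding to the trivial summand, and an equivalence $^0\Per(Y^+/X) \cong A^+\textnormal{-mod}$. Applying Proposition~\ref{proposition: Moduli space construction} to $\pi^+$ with the algebra $A^+$ immediately yields $Y^+ \cong \mathcal{M}_{A^+}$, which supplies the second isomorphism of the corollary.

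Next, Corollary~\ref{Corollary: derived flops} provides an equivalence $^{-1}\Per(Y/X) \cong {^0\Per(Y^+/X)}$ which, combined with Theorem~\ref{Theorem:Van den Bergh Tilting}(2), gives a Morita equivalence $A^{\op}\textnormal{-mod} \cong A^+\textnormal{-mod}$. To promote this to an isomorphism $\mathcal{M}_{A^{\op}} \cong \mathcal{M}_{A^+}$ of moduli spaces I would verify that the equivalence matches the data defining the moduli construction, namely the chosen idempotents and the dimension vectors. The equivalence is realised by the projective generators $T^{\vee} = \mathcal{O}_Y \oplus T'^{\vee}$ on the one side and $T^+ = \mathcal{O}_{Y^+} \oplus T^{+\prime}$ on the other, each having a canonical trivial summand given by the structure sheaf. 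Under the flop equivalence these trivial summands correspond, identifying $e_0 \in A^{\op}$ with $e_0^+ \in A^+$; the remaining indecomposable summands correspond after reordering, so $\rk T'^{\vee} = \rk T^{+\prime}$ and the dimension vectors $(1, \rk T'^{\vee})$ and $(1, \rk T^{+\prime})$ agree, with the $0$-generation condition preserved. Hence the Morita equivalence restricts to an isomorphism of moduli functors, yielding $\mathcal{M}_{A^{\op}} \cong \mathcal{M}_{A^+} \cong Y^+$.

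The main obstacle is the identification of the trivial summands on the two sides of the flop equivalence. This rests on an intrinsic characterisation of $\mathcal{O}_Y$ and $\mathcal{O}_{Y^+}$ as objects in the respective perverse hearts (for instance as pullbacks of $\mathcal{O}_X$ along $\pi$ and $\pi^+$ using the adjoint relations implicit in the tilting set-up), together with the basic form of the algebras guaranteed by the Krull--Schmidt property recalled in Remark~\ref{Remark:KrullSchmidt}. Once this correspondence of idempotents is in place, the remainder of the argument is the routine observation that a Morita equivalence matching distinguished idempotents and dimension vectors induces an isomorphism between the corresponding moduli spaces of $0$-generated representations.
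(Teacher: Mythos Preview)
Your proposal is correct and follows essentially the same route as the paper: the sentence preceding the corollary records precisely this argument, namely that ${^{-1}\Per(Y/X)} \cong {^{0}\Per(Y^+/X)}$ makes $A^+$ Morita equivalent to $A^{\op}$, after which Proposition~\ref{proposition: Moduli space construction} applied to $\pi^+$ gives $\mathcal{M}_{A^+}\cong Y^+$. You are in fact more careful than the paper in isolating the one nontrivial point, that the Morita equivalence must match the distinguished idempotents and dimension vectors for the moduli spaces to coincide; the paper leaves this to the cited reference.
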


The map $\pi: Y \rightarrow X= \Spec \, R$ defines $Y$ as an $R$-scheme. The moduli space $\mathcal{M}_A$ comes equipped with this morphism $\pi: \mathcal{M}_A \cong Y \rightarrow \Spec R$ corresponding to restricting an $A$-module to an $e_0 A e_0$-module, where $e_0 A e_0 \cong R$, as 
\begin{align*}
e_0 A e_0 &\cong  \mathbf{R} \Hom_{Y}(\mathcal{O}_Y,\mathcal{O}_Y) \\
&\cong  \mathbf{R} \Hom_{X}(\mathcal{O}_X,\mathbf{R}\pi_*\mathcal{O}_Y) \tag{$\mathcal{O}_Y \cong \mathbf{L}\pi^* \mathcal{O}_X$ and adjunction}\\
&\ \cong  \mathbf{R} \Hom_{X}(\mathcal{O}_X,\mathcal{O}_X) \tag{Assumption \ref{ass:fibres} (2)} \\ 
& \cong R. \tag{$X= \Spec \, R$ affine}
\end{align*}
As such the construction in Proposition \ref{proposition: Moduli space construction} and Corollary \ref{Corollary: Moduli space flops} recovers both morphisms $\pi: Y \cong \mathcal{M}_A \rightarrow X$ and $\pi^{+}: Y^{+} \cong \mathcal{M}_{A^{\op}}\rightarrow X$, where $e_0^{\op} A^{\op} e_0^{\op} \cong (e_0 A e_0)^{\op} \cong R^{\op} \cong R$ as $R$ is commutative. This can be rephrased by saying that the moduli construction realises $Y$ and $Y^+$ as $R$-schemes.

\subsection{Universal flopping algebras} \label{Subsection: Universal flopping algebras}
We now apply these techniques to the universal flops of Katz and Morrison appearing in Theorem \ref{Theorem:Universal Flop} and recapped in Section \ref{Subsection:Deformations of Kleinian singularities and resolutions}. The universal flop $\pi^{(+)}:\mathcal{Y}_l^{(+)}\rightarrow \mathcal{X}_l$ of length $l$ satisfies Assumption \ref{ass:fibres}, and we define the \emph{universal flopping algebra of length $l$}
\[
\mathcal{A}_l:= \End_{\mathcal{Y}_l}(\mathcal{O}_{\mathcal{Y}_l} \oplus \mathcal{T}')^{\op}
\]
to be the basic algebra produced from $\pi_l:\mathcal{Y}_l \rightarrow \mathcal{X}_l \cong \Spec \mathcal{R}_l$ by Theorem \ref{Theorem:Van den Bergh Tilting} such that $\mathcal{A}_l$-$\modcat \cong {^{0}\Per(\mathcal{Y}_l/\mathcal{X}_l)}$. It comes equipped with a chosen idempotent $e_0 \in \mathcal{A}_l$ corresponding to the summand $\mathcal{O}_{\mathcal{Y}_l}$. The universal flopping algebra $\mathcal{A}_l$ is a finitely generated $\mathcal{R}_l$-algebra, and both both the algebras $\mathcal{A}_l$ and $\mathcal{R}_l \cong e_0 \mathcal{A}_l e_0$ are flat $\mathbb{H}_l$-algebras where $\Spec\, \mathbb{H}_l := \mathfrak{h}_l$ and $\Spec \, \mathcal{R}_l \cong \mathcal{X}_l$.

The following results are immediate consequences of applying Van den Bergh's construction recalled Theorem \ref{Theorem:Van den Bergh Tilting} and the moduli constructions of Section \ref{Sec:Moduli} to the universal flop of length $l$.

\begin{proposition} \label{Prop:Summary}
There is a derived equivalence 
\[
D^b( \coh \, \mathcal{Y}_{l}) \cong D^b(\mathcal{A}_{l}\text{-mod})
\]
Moreover, the universal flop can be recovered via a moduli space construction:
\begin{align*}
\begin{aligned}
\begin{tikzpicture} [bend angle=0]
\node (Y) at (-1.25,1.5)  {$\mathcal{Y}_l$};
\node (Y+) at (1.25,1.5)  {$\mathcal{Y}_l^{+}$};
\node (X) at (0,0)  {$\mathcal{X}_l$};
\draw [->] (Y) to node[below left, pos=0.4]  {$\pi_l$} (X);
\draw [->] (Y+) to node[ below right, pos=0.3]  {$\pi_l^{+}$} (X);
\end{tikzpicture}
\end{aligned}
\cong
\begin{aligned}
\begin{tikzpicture} [bend angle=0]
\node (Y) at (-1.25,1.5)  {$\mathcal{M}_{\mathcal{A}_l}$};
\node (Y+) at (1.25,1.5)  {$\mathcal{M}_{\mathcal{A}^{\op}_l}$};
\node (X) at (0,0)  {$\Spec \, e_0 \mathcal{A}_l e_0$};
\draw [->] (Y) to node[below left, pos=0.4]  {$\pi_l$} (X);
\draw [->] (Y+) to node[ below right, pos=0.3]  {$\pi_l^{+}$} (X);
\end{tikzpicture}
\end{aligned}
\end{align*}
where $\mathcal{R}_l \cong e_0 \mathcal{A}_l e_0 \cong e_0 \mathcal{A}_l^{\op}e_0$, $\Spec \, \mathcal{R}_l \cong \mathcal{X}_l$, and $\mathcal{M}_{\mathcal{A}_l} \cong \mathcal{Y}_l$ and $\mathcal{M}_{\mathcal{A}_l^{\op}} \cong \mathcal{Y}_l^{+}$ as $\mathcal{R}_l$-schemes.
\end{proposition}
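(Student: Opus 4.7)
The plan is to verify directly that the universal flop $\pi_l^{(+)}\colon \mathcal{Y}_l^{(+)} \to \mathcal{X}_l$ satisfies the hypotheses needed to apply the three input results recalled in Section \ref{Sec: Translation} and Section \ref{Sec:Moduli}: Theorem \ref{Theorem:Van den Bergh Tilting}, Proposition \ref{proposition: Moduli space construction}, and Corollary \ref{Corollary: Moduli space flops}. Once Assumption \ref{ass:fibres} is checked and the flopping contraction hypothesis is justified, the proposition falls out essentially by citation.

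First I would check Assumption \ref{ass:fibres} for $\pi_l\colon \mathcal{Y}_l \to \mathcal{X}_l$. The fibres of $\pi_l$ are fibrewise (over $\mathfrak{h}_l$) exceptional loci of partial resolutions $Y_{\Gamma_C} \to X_\Gamma$, hence are at most one-dimensional; away from the exceptional locus $\pi_l$ is an isomorphism and the reduced fibre is a point. For the condition $\mathbf{R}\pi_{l*}\mathcal{O}_{\mathcal{Y}_l} \cong \mathcal{O}_{\mathcal{X}_l}$, I would argue fibrewise: over the central closed point of $\mathfrak{h}_l$ this is the statement that $Y_{\Gamma_C} \to X_\Gamma$ has rational singularities, which is standard for partial resolutions of Kleinian surface singularities dominated by the minimal resolution. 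Since $\alpha$ and $\alpha \circ \pi_l$ are flat, cohomology and base change together with the vanishing $\mathbf{R}^1 \pi_{l*} \mathcal{O}_{\mathcal{Y}_l} = 0$ on the central fibre propagates the isomorphism to the total space.

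With Assumption \ref{ass:fibres} in hand, Theorem \ref{Theorem:Van den Bergh Tilting} produces a basic tilting bundle $\mathcal{T} = \mathcal{O}_{\mathcal{Y}_l}\oplus \mathcal{T}'$ on $\mathcal{Y}_l$, an algebra $\mathcal{A}_l := \End_{\mathcal{Y}_l}(\mathcal{T})^{\op}$ (which is basic by Remark \ref{Remark:KrullSchmidt} applied to the graded complete local $\mathbb{C}$-algebra $\mathcal{R}_l$ equipped with its $\mathbb{C}^*$-action from Remark \ref{Remark:grading}), and the derived equivalence $D^b(\coh\,\mathcal{Y}_l) \cong D^b(\mathcal{A}_l\text{-mod})$. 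The isomorphism $\mathcal{R}_l \cong e_0 \mathcal{A}_l e_0$ is the computation of $\mathbf{R}\Hom_{\mathcal{Y}_l}(\mathcal{O}_{\mathcal{Y}_l},\mathcal{O}_{\mathcal{Y}_l})$ recorded below Corollary \ref{Corollary: Moduli space flops}, and Proposition \ref{proposition: Moduli space construction} then yields $\mathcal{Y}_l \cong \mathcal{M}_{\mathcal{A}_l}$ as $\mathcal{R}_l$-schemes.

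The main obstacle is the identification $\mathcal{Y}_l^+ \cong \mathcal{M}_{\mathcal{A}_l^{\op}}$, which requires $\pi_l$ to be a flopping contraction with one-dimensional fibres in the sense of \cite[Section 4.4]{3Dflops}, so that Corollary \ref{Corollary: derived flops} and Corollary \ref{Corollary: Moduli space flops} apply. The subtlety is that $\mathcal{Y}_l^{(+)}$ and $\mathcal{X}_l$ are higher dimensional, not threefolds; I would handle this by reducing to the relative situation over $\mathfrak{h}_l$. Each closed fibre of $\alpha$ gives a partial resolution of a Kleinian surface singularity with an alternative simultaneous resolution $\mathcal{Y}_l^+$, which by \cite[Theorem 3]{PinkhamConstruction} is the unique other simultaneous resolution of $\mathcal{X}_l \to \mathfrak{h}_l$. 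The compatibility of the perverse t-structure with flat base change (Remark \ref{Remark:Flat Base Change}) then identifies $^{-1}\Per(\mathcal{Y}_l/\mathcal{X}_l) \cong {^0\Per(\mathcal{Y}_l^+/\mathcal{X}_l)}$, so the flopping contraction condition extends from the central fibre to the total family. Corollary \ref{Corollary: Moduli space flops} then gives $\mathcal{M}_{\mathcal{A}_l^{\op}} \cong \mathcal{Y}_l^+$ as $\mathcal{R}_l$-schemes, completing the proof.
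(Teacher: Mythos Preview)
Your approach is essentially the paper's: the proposition is stated there as an immediate consequence of applying Theorem \ref{Theorem:Van den Bergh Tilting}, Proposition \ref{proposition: Moduli space construction}, and Corollary \ref{Corollary: Moduli space flops} to $\pi_l$, with no further argument. Your explicit verification of Assumption \ref{ass:fibres} is a useful elaboration that the paper omits.

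There is one small confusion in your handling of the flop side. Remark \ref{Remark:Flat Base Change} concerns flat base change in $X$ for a fixed $Y$; it does not compare the perverse hearts on two different resolutions $\mathcal{Y}_l$ and $\mathcal{Y}_l^+$, so it cannot by itself yield $^{-1}\Per(\mathcal{Y}_l/\mathcal{X}_l) \cong {^0}\Per(\mathcal{Y}_l^+/\mathcal{X}_l)$. The correct route is the one the paper implicitly takes: the pair $\pi_l,\pi_l^+$ are by construction (Section \ref{Subsection:Deformations of Kleinian singularities and resolutions}) the two simultaneous partial resolutions of $\mathcal{X}_l$, hence a flopping contraction with its flop in the sense of \cite[Section 4.4]{3Dflops}, so Corollaries \ref{Corollary: derived flops} and \ref{Corollary: Moduli space flops} apply directly. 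Your fibrewise reduction is not needed, and the base change step as written does not do what you claim.
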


The algebra $\mathcal{A}_l$ can also be interpreted on the level of $\mathcal{R}_l$ via MCM modules.
\begin{proposition} \label{Prop:MCM Equivalence}
Consider the $\mathcal{R}_l$-modules $\mathcal{N}_l:=e_0\mathcal{A}_l(1-e_0)$ and $\mathcal{N}_l^{+} := (1-e_0)\mathcal{A}_l e_0 $. These are MCM $\mathcal{R}_l$-modules and there are isomorphisms
\[ \mathcal{A}_l \cong \End_{\mathcal{R}_l}(\mathcal{R}_l \oplus \mathcal{N}_l)^{\op} \quad \text{and} \quad  \mathcal{A}_l^{\op} \cong \End_{\mathcal{R}_l}(\mathcal{R}_l \oplus \mathcal{N}_l^{+})^{\op}. \]
\end{proposition}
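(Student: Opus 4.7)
The plan is to first identify the $\mathcal{R}_l$-modules $\mathcal{N}_l$ and $\mathcal{N}_l^{+}$ as pushforwards of summands of the tilting bundle, next verify the MCM property using Grothendieck duality combined with the crepancy of $\pi_l$ and the tilting hypothesis, and finally obtain both algebra isomorphisms by applying the same pushforward comparison once to the tilting bundle $T$ of ${}^{0}\Per(\mathcal{Y}_l/\mathcal{X}_l)$ and once to the dual tilting bundle $T^{\vee}$ of ${}^{-1}\Per(\mathcal{Y}_l/\mathcal{X}_l)$ from Theorem \ref{Theorem:Van den Bergh Tilting}(2).

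Write $T = \mathcal{O}_{\mathcal{Y}_l} \oplus \mathcal{T}'$ for the tilting bundle underlying $\mathcal{A}_l = \End_{\mathcal{Y}_l}(T)^{\op}$, so that $e_0$ is the projection onto the trivial summand. Unwinding the opposite multiplication in the $2 \times 2$ block decomposition identifies
\[
e_0\mathcal{A}_le_0 \cong \End_{\mathcal{Y}_l}(\mathcal{O}_{\mathcal{Y}_l}) \cong \mathcal{R}_l,
\]
and moreover
\[
\mathcal{N}_l \cong \Hom_{\mathcal{Y}_l}(\mathcal{O}_{\mathcal{Y}_l}, \mathcal{T}') \cong \pi_{l,*}\mathcal{T}', \qquad
\mathcal{N}_l^{+} \cong \Hom_{\mathcal{Y}_l}(\mathcal{T}', \mathcal{O}_{\mathcal{Y}_l}) \cong \pi_{l,*}\mathcal{T}'^{\vee}
\]
as $\mathcal{R}_l$-modules, using Assumption \ref{ass:fibres}(2) for the first line.

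For the MCM property, note that $\mathcal{R}_l$ is Gorenstein (as a flat deformation over the smooth affine $\mathfrak{h}_l$ of the Gorenstein Kleinian singularity $R_\Gamma$) and that $\pi_l$ is crepant, since its fibres are crepant partial resolutions and crepancy is preserved under this flat family, so $\pi_l^{!}\mathcal{O}_{\mathcal{X}_l} \cong \mathcal{O}_{\mathcal{Y}_l}$. The tilting hypothesis forces $R^{>0}\pi_{l,*}\mathcal{T}' = 0$ and $\Ext^{>0}_{\mathcal{Y}_l}(\mathcal{T}', \mathcal{O}_{\mathcal{Y}_l}) = 0$, whence Grothendieck duality yields
\[
\mathbf{R}\Hom_{\mathcal{R}_l}(\mathcal{N}_l, \mathcal{R}_l) \cong \mathbf{R}\pi_{l,*}\mathbf{R}\mathcal{H}om_{\mathcal{Y}_l}(\mathcal{T}', \pi_l^{!}\mathcal{O}_{\mathcal{X}_l}) \cong \pi_{l,*}\mathcal{T}'^{\vee} = \mathcal{N}_l^{+},
\]
concentrated in degree zero. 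Hence $\Ext^{>0}_{\mathcal{R}_l}(\mathcal{N}_l, \mathcal{R}_l) = 0$, which over the Gorenstein ring $\mathcal{R}_l$ is equivalent to $\mathcal{N}_l$ being MCM; the analogous computation with $\mathcal{T}'^{\vee}$ handles $\mathcal{N}_l^{+}$.

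For the algebra isomorphism, pushforward along $\pi_l$ defines a natural ring homomorphism $\End_{\mathcal{Y}_l}(T) \to \End_{\mathcal{R}_l}(\pi_{l,*}T) = \End_{\mathcal{R}_l}(\mathcal{R}_l \oplus \mathcal{N}_l)$. Three of the four matrix blocks have already been identified as isomorphisms, using the displayed calculation to handle $\Hom_{\mathcal{R}_l}(\mathcal{N}_l,\mathcal{R}_l) \cong \mathcal{N}_l^{+}$; the remaining block $\End_{\mathcal{Y}_l}(\mathcal{T}') \to \End_{\mathcal{R}_l}(\mathcal{N}_l)$ is handled in the same way, since both modules are reflexive over the normal Gorenstein ring $\mathcal{R}_l$ (the left-hand side by a further application of the Grothendieck duality calculation, the right-hand side because endomorphism rings of reflexives are reflexive) and the map is tautologically an isomorphism on the locus where $\pi_l$ is an isomorphism, hence on the codimension $\ge 2$ complement. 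Taking opposites gives the first stated identification, and applying the same argument to the dual tilting bundle $T^{\vee}$ of ${}^{-1}\Per(\mathcal{Y}_l/\mathcal{X}_l)$ gives $\mathcal{A}_l^{\op} \cong \End_{\mathcal{R}_l}(\mathcal{R}_l \oplus \mathcal{N}_l^{+})^{\op}$. The main technical point is the global crepancy $\pi_l^{!}\mathcal{O}_{\mathcal{X}_l} \cong \mathcal{O}_{\mathcal{Y}_l}$ over $\mathfrak{h}_l$ required to invoke Grothendieck duality; this is inherited from the versal deformation construction of $\mathcal{Y}_{\Gamma_C}$ recalled in Section \ref{Subsection:Deformations of Kleinian singularities and resolutions}.
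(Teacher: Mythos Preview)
Your argument is correct and is essentially the paper's proof with the citations unpacked. After making the same identifications $\mathcal{N}_l \cong \mathbf{R}\pi_{l,*}\mathcal{T}'$ and $\mathcal{N}_l^+ \cong \mathbf{R}\pi_{l,*}\mathcal{T}'^{\vee}$, the paper simply invokes \cite[Lemma~3.2.9 and Proposition~3.2.10]{3Dflops} for the MCM property and the isomorphism $\End_{\mathcal{Y}_l}(T)\cong\End_{\mathcal{X}_l}(\mathbf{R}\pi_{l,*}T)$, whereas your Grothendieck-duality-plus-crepancy and reflexivity computations are precisely the arguments underlying those cited results.
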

\begin{proof} Recall the splitting $T=\mathcal{O}_{\mathcal{Y}_l} \oplus T'$. Then by  definition $\mathbf{R}\pi_*(T') \cong \mathbf{R}\Hom_Y(\mathcal{O}_Y,T') \cong e_0 \mathcal{A}_l(1-e_0) \cong  \mathcal{N}_2$ and $\mathbf{R}\pi_*(T'^{\vee})\cong \mathbf{R}\Hom_Y(T',\mathcal{O}_Y) \cong (1-e_0)\mathcal{A}_l e_0 \cong \mathcal{N}_2^{+}$. These are left and right MCM $\mathcal{R}_l$-modules respectively by  \cite[Lemma 3.2.9]{3Dflops}, and as $\End_{\mathcal{Y}_l}(T) \cong \End_{\mathcal{X}_l}(\mathbf{R}\pi_* T)$ and $\End_{\mathcal{Y}_l}(T^{\vee}) \cong \End_{\mathcal{X}_l}(\mathbf{R}\pi_* T^{\vee})$   it follows that 
\[ \mathcal{A}_l \cong \End_{\mathcal{R}_l}(\mathcal{R}_l \oplus \mathcal{N}_l)^{\op} \quad \text{and} \quad  \mathcal{A}_l^{\op} \cong \End_{\mathcal{R}_l}(\mathcal{R}_l \oplus \mathcal{N}_l^{+})^{\op} \] by \cite[Proposition 3.2.10.]{3Dflops} 
(or see the proof of  \cite[Lemma 3.6]{WemyssGL2}).
\end{proof}

\begin{remark}
As $(\mathcal{R}, \mathfrak{m})$ is a complete graded local $\mathbb{C}$-algebra with $\mathcal{R}/\mathfrak{m} \cong \mathbb{C}$ there is a unique non-free indecomposable MCM $\mathcal{R}_l$-module by Remark \ref{Remark:KrullSchmidt} in the graded setting. As we assume that $\mathcal{A}_l \cong \End_{\mathcal{R}_l}(\mathcal{R}_l \oplus \mathcal{N}_l)^{\op}$ is basic, we assume that  $\mathcal{N}_l$ is the unique non-free indecomposable MCM $\mathcal{R}_l$-module. Moreover, over the central fibre of $\mathfrak{h}_l$ the module $\mathcal{N}_l$ restricts to an MCM module on the Kleinian singularity $R_{\Gamma}$. Such modules are classified, see \cite{ArtinVerdier}, and it can be seen that this module has rank $l$.
\end{remark}

\section{Explicit realisations of the universal flopping algebras}

Whilst Theorem \ref{Theorem:Van den Bergh Tilting} shows noncommutative algebras modelling chosen geometric situations exist, it does not address how to determine explicit presentations of these algebras. We now recall several classes of morphisms $\pi:Y \rightarrow \Spec \, R$ satisfying Assumption \ref{ass:fibres} where presentations of algebras $A$ produced by Theorem \ref{Theorem:Van den Bergh Tilting} are known. This progression through examples will build towards finding relatively simple presentations of the universal flopping algebras and proving Theorem \ref{Theorem: Explicit presentations}.

\subsection{Resolutions of Kleinian singularities}\label{sec:Kleinian presentations} We first consider the minimal resolutions of Kleinian singularities
\[
\pi_{\Gamma}:Y_{\Gamma} \rightarrow X_{\Gamma}
\]
recapped in Section \ref{subsection:Kleinian singularities and resolutions}. The singularity $X_{\Gamma}$ is isomorphic to a quotient singularity $\mathbb{C}^2/G$ for some finite group $G<\SL_2(\mathbb{C})$, see Remark \ref{Remark:Group Quotient}. It is known from the McKay correspondence that the abelian category ${^{0}\Per(Y_{\Gamma}/X_{\Gamma})}$ is equivalent to $G$-equivariant coherent sheaves on $\mathbb{C}^2$, which in turn is equivalent to the category of finitely generated modules over the skew group algebra $\mathbb{C}[u,v] \rtimes G$. 

In this situation the conclusions of Theorem \ref{Theorem:Van den Bergh Tilting} were previously known from the derived $\SL_2(\mathbb{C})$ McKay correspondence, where the derived equivalence between $Y_{\Gamma}$ and $\mathbb{C}[u,v] \rtimes G$ was constructed by Kapranov and Vasserot \cite{KapranovVasserot} and the moduli construction $\mathcal{M}_{A_{\Gamma}}$ of the minimal resolution is the G Hilbert scheme construction of Ito and Nakamura \cite{ItoNakamura}.

However, the skew group algebra is not basic, and we let $A_{\Gamma}$ denote the basic noncommutative algebra constructed by Theorem \ref{Theorem:Van den Bergh Tilting} which is isomorphic to the preprojective algebra of extended Dynkin type $\widetilde{\Gamma}$.

\begin{definition} We recall the \emph{preprojective algebra of extended Dynkin type $\widetilde{\Gamma}$} is defined as the path algebra of a quiver with relations as follows.

\begin{enumerate}
\item  Let $Q=(Q_0, Q_1)$ be quiver corresponding to some choice of orientation on the arrows of the extended Dynkin diagram $\widetilde{\Gamma}$.

\item Label the arrows. 

\item Produce the double quiver $\widetilde{Q}=(Q_0, Q_1 \cup Q_1^*)$ by introducing an arrow $a^*:j \rightarrow i$ for each arrow $a:i \rightarrow j \in Q_1$. 

\item The preprojective algebra is defined to be the path algebra over $\mathbb{C}$ of the quiver $\widetilde{Q}$ with relations defined by $\sum_{a \in Q_1} [a,a^*]$.
\end{enumerate}
\end{definition}

We note that the relation $\sum_{a \in Q_1} [a,a^*]$ breaks up into one relation $e_i (\sum_{a \in Q_1} [a,a^*]) e_i$ for each vertex $i$, and we also note that these preprojective algebras are graded with arrows in degree $1$ and generating relations in degree 2.

\begin{proposition}[McKay Correspondence]
There is an isomorphism of algebras.
\[
A_{\Gamma} \cong \frac{\mathbb{C} \widetilde{Q}}{\langle \sum_{a \in Q_1} [a,a^*] \rangle }.
\] 
\end{proposition}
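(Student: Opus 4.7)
The plan is to exploit the abelian equivalence
\[
{^{0}\Per(Y_\Gamma/X_\Gamma)} \;\simeq\; \bigl(\mathbb{C}[u,v] \rtimes G\bigr)\text{-}\modcat
\]
already noted in the excerpt (via the derived McKay correspondence of Kapranov--Vasserot together with Van den Bergh's characterisation of $^{0}\Per$), and then identify the basic algebra Morita equivalent to the skew group algebra $B := \mathbb{C}[u,v] \rtimes G$ with the preprojective algebra of $\widetilde{\Gamma}$. Since $A_\Gamma$ is by construction the basic endomorphism algebra of a projective generator of $^{0}\Per(Y_\Gamma/X_\Gamma)$, combining this equivalence with Theorem \ref{Theorem:Van den Bergh Tilting} reduces the claim to a purely algebraic statement about $B$.

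To carry this out I would pick primitive idempotents $e_i \in \mathbb{C} G \subset B$ indexed by the irreducible $G$-representations $\rho_i$ for $i \in |\widetilde{\Gamma}|$, with $e_0$ the trivial representation (which, under the chain of equivalences, matches the structure sheaf summand of the tilting bundle). Writing $e = \sum e_i$, the basic algebra $eBe$ inherits a grading from $\mathbb{C}[u,v]$; its degree-zero part is $\bigoplus_i \mathbb{C} e_i$ and its degree-one part is $\bigoplus_{i,j} \Hom_G(\rho_i, V \otimes \rho_j)$ with $V = \mathbb{C}^2$. The McKay correspondence identifies $\dim \Hom_G(\rho_i, V \otimes \rho_j)$ with the number of edges between $i$ and $j$ in $\widetilde{\Gamma}$, so choosing an orientation $Q$ of $\widetilde{\Gamma}$ and suitable bases realises these morphisms as the arrows of the double quiver $\widetilde{Q}$. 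Since $B$ is generated over $\mathbb{C} G$ in degree one, so is $eBe$, giving a surjection $\mathbb{C}\widetilde{Q} \twoheadrightarrow eBe \cong A_\Gamma$.

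The main technical obstacle is to identify the kernel of this surjection, which is generated by the image of the single commutator relation $uv-vu = 0$ in $\mathbb{C}[u,v]$, with the preprojective ideal $\langle \sum_{a \in Q_1}[a,a^*]\rangle$. Here the fact that $G \subset \SL_2(\mathbb{C})$ preserves a symplectic form on $V$ is essential: it gives a $G$-equivariant isomorphism $V \cong V^{\vee}$, hence a perfect pairing between $\Hom_G(\rho_i, V \otimes \rho_j)$ and $\Hom_G(\rho_j, V \otimes \rho_i)$, and choosing dual bases matches the involution $a \leftrightarrow a^*$ on arrows of $\widetilde{Q}$. Under this identification the antisymmetric tensor $uv - vu \in \Lambda^2 V$ pushes in each corner $e_i(eBe)e_i$ to precisely $e_i\bigl(\sum_{a \in Q_1}[a,a^*]\bigr) e_i$ up to a nonzero scalar. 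A Hilbert-series comparison with the preprojective algebra of $\widetilde{\Gamma}$ in types ADE finally shows that these are the only relations, yielding the presentation $A_\Gamma \cong \mathbb{C}\widetilde{Q}/\langle \sum_{a \in Q_1}[a,a^*]\rangle$.
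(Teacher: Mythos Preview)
The paper does not actually prove this proposition: it is stated as a known fact under the heading ``McKay Correspondence'' and justified only by the surrounding discussion citing Kapranov--Vasserot and Ito--Nakamura, together with the remark that $A_\Gamma$ is the basic algebra Morita equivalent to $\mathbb{C}[u,v]\rtimes G$. No argument beyond that is offered.

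Your proposal therefore supplies what the paper omits, and the outline you give is correct and is precisely the standard route (as in Crawley--Boevey--Holland or Reiten--Van den Bergh). The two substantive steps you identify are the right ones: using the $\SL_2$-invariant symplectic form on $V$ to pair arrows with their opposites and thereby transport the single commutator $uv-vu$ to $\sum_a [a,a^*]$, and then a dimension/Hilbert-series count to confirm no further relations are needed. One small point worth making explicit is that the idempotent $e_0$ you choose (corresponding to the trivial representation) must agree with the distinguished idempotent coming from the $\mathcal{O}_Y$ summand of Van den Bergh's tilting bundle; this is indeed the case, since under the $G$-Hilbert scheme identification the trivial representation corresponds to the structure sheaf, but it is the one place where the algebraic and geometric sides need to be matched and not just the Morita classes.
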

 
We now list presentations of the preprojective algebras of types $A_1$, $D_4$, $E_6$, $E_7$, and $E_8$.
\begin{example}(Type $A_1)$
The algebra $A_{\Gamma}$ can be presented as the path algebra over $\mathbb{C}$ of the following quiver with relations
\begin{align*}
\begin{aligned}
\begin{tikzpicture} [bend angle=15, looseness=1]
\node (C0) at (0,0)  {$0$};
\node (C1) at (2,0)  {$1$};
\draw [->,bend left=35] (C0) to node[gap]  {\scriptsize{$a_0$}} (C1);
\draw [->,bend left] (C0) to node[gap]  {\scriptsize{$a_1^*$}} (C1);
\draw [->,bend left] (C1) to node[gap]  {\scriptsize{$a_0^*$}} (C0);
\draw [->,bend left=35] (C1) to node[gap]  {\scriptsize{$a_1$}} (C0);
\end{tikzpicture}
\end{aligned}
& \quad \quad
\begin{aligned}
&a_0a_0^*-a_1^*a_1=0 \\
&a_1a_1^*-a_0^*a_0=0 
\end{aligned}
\end{align*}
In this case $A_{\Gamma} \cong A_{\Gamma_C}$, so this is also a presentation of $A_{\Gamma_C}$.
\end{example}

\begin{example}(Type $D_4$)
The algebra $A_{\Gamma}$ can be presented as the path algebra over $\mathbb{C}$ for the following quiver and relations.
\begin{align*}
\begin{aligned}
\begin{tikzpicture} [bend angle=15, looseness=1]
\node (C0) at (-1.3,-1.3)  {$0$};
\node (C1) at (-1.3,1.3)  {$1$};
\node (C2) at (1.3,1.3)  {$2$};
\node (C3) at (1.3,-1.3)  {$3$};
\node (C4) at (0,0)  {$4$};
\draw [->,bend left] (C0) to node[gap]  {\scriptsize{$a_0$}} (C4);
\draw [->,bend left] (C4) to node[gap]  {\scriptsize{$a_0^*$}} (C0);
\draw [->,bend left] (C1) to node[gap]  {\scriptsize{$a_1$}} (C4);
\draw [->,bend left] (C4) to node[gap]  {\scriptsize{$a_1^*$}} (C1);
\draw [->,bend left] (C2) to node[gap]  {\scriptsize{$a_2$}} (C4);
\draw [->,bend left] (C4) to node[gap]  {\scriptsize{$a_2^*$}} (C2);
\draw [->,bend left] (C3) to node[gap]  {\scriptsize{$a_3$}} (C4);
\draw [->,bend left] (C4) to node[gap]  {\scriptsize{$a_3^*$}} (C3);
\end{tikzpicture}
\end{aligned}
& \quad \quad
\begin{aligned}
&a_0a_0^*=0, \quad a_1a_1^*=0, \\ 
&a_2a_2^*=0, \quad a_3a_3^*=0, \\ 
&a_0^*a_0+a_1^*a_1+a_2^*a_2+a_3^*a_3=0.\\ 
\end{aligned}
\end{align*}
\end{example}

\begin{example}(Type $E_6$)
The algebra $A_{\Gamma}$ of can be presented as the path algebra over $\mathbb{C}$ of the following path algebra with relations.
\begin{align*}
\begin{aligned}
\begin{tikzpicture} [bend angle=15, looseness=1]
  \node (6) at (0,0) {$6$};
  \node (3) at (-1.4,0) {$3$};
   \node (2) at (-2.8,0) {$2$};
  \node (1) at (0,1.4) {$1$};
  \node (0) at (-1.4,1.4) {$0$};
  \node (5)  at (1.4,0) {$5$};
    \node (4)  at (2.8,0) {$4$};
\draw [->,bend left] (0) to node[above]  {\scriptsize{$a_0$}} (1);
\draw [->,bend left] (1) to node[right]  {\scriptsize{$a_1$}} (6);
\draw [->,bend left] (2) to node[above]  {\scriptsize{$a_2$}} (3);
\draw [->,bend left] (3) to node[above]  {\scriptsize{$a_3$}} (6);
\draw [->,bend left] (4) to node[below]  {\scriptsize{$a_4$}} (5);
\draw [->,bend left] (5) to node[below]  {\scriptsize{$a_5$}} (6);
\draw [->,bend left] (1) to node[below]  {\scriptsize{$a_0^*$}} (0);
\draw [->,bend left] (6) to node[left]  {\scriptsize{$a_1^*$}} (1);
\draw [->,bend left] (3) to node[below]  {\scriptsize{$a_2^*$}} (2);
\draw [->,bend left] (6) to node[below]  {\scriptsize{$a_3^*$}} (3);
\draw [->,bend left] (5) to node[above]  {\scriptsize{$a_4^*$}} (4);
\draw [->,bend left] (6) to node[above]  {\scriptsize{$a_5^*$}} (5);
\end{tikzpicture}
\end{aligned}
& \quad \quad
\begin{aligned}
&a_0a_0^*=0, \quad  a_1a_1^*-a_0^*a_0=0,  \\
&a_2a_2^*=0, \quad a_3a_3^* - a_2^*a_2=0,  \\
&a_4a_4^*=0, \quad a_5a_5^* - a_4^*a_4=0, \\
&a_1^*a_1+a_3^*a_3+a_5^*a_5=0.
\end{aligned}
\end{align*}
\end{example}

\begin{example}(Type $E_7$) 
The algebra $A_{\Gamma}$ can be presented as the path algebra over $\mathbb{C}$ of the following quiver with relations.
\begin{align*}
\begin{aligned}
\begin{tikzpicture} [bend angle=15, looseness=1]
  \node (7) at (0,0) {$7$};
  \node (3) at (0,1.4) {$3$};
  \node (2) at (-1.2,0) {$2$};
  \node (1) at (-2.4,0) {$1$};
  \node (0) at (-2.4,1.4) {$0$};
  \node (6)  at (1.2,0) {$6$};
  \node (5)  at (2.4,0) {$5$};
  \node (4)  at (3.6,0) {$4$};
\draw [->,bend left] (0) to node[right]  {\scriptsize{$a_0$}} (1);
\draw [->,bend left] (1) to node[above]  {\scriptsize{$a_1$}} (2);
\draw [->,bend left] (2) to node[above]  {\scriptsize{$a_2$}} (7);
\draw [->,bend left] (3) to node[right]  {\scriptsize{$a_3$}} (7);
\draw [->,bend left] (4) to node[below]  {\scriptsize{$a_4$}} (5);
\draw [->,bend left] (5) to node[below]  {\scriptsize{$a_5$}} (6);
\draw [->,bend left] (6) to node[below]  {\scriptsize{$a_6$}} (7);
\draw [->,bend left] (1) to node[left]  {\scriptsize{$a_0^*$}} (0);
\draw [->,bend left] (2) to node[below]  {\scriptsize{$a_1^*$}} (1);
\draw [->,bend left] (7) to node[below]  {\scriptsize{$a_2^*$}} (2);
\draw [->,bend left] (7) to node[left]  {\scriptsize{$a_3^*$}} (3);
\draw [->,bend left] (5) to node[above]  {\scriptsize{$a_4^*$}} (4);
\draw [->,bend left] (6) to node[above]  {\scriptsize{$a_5^*$}} (5);
\draw [->,bend left] (7) to node[above]  {\scriptsize{$a_6^*$}} (6);
\end{tikzpicture}
\end{aligned}
& \quad \quad
\begin{aligned}
&a_0a_0^*=0, \quad a_1a_1^*-a_0^*a_0=0,  \\
&a_3a_3^*=0, \quad a_2a_2^*-a_1^*a_1=0\\
&a_4a_4^*=0, \quad a_5a_5^*-a_4^*a_4=0, \\
&a_6a_6^*-a_5^*a_5=0, \\
&a_2^*a_2+a_3^*a_3+a_5^*a_5=0.
\end{aligned}
\end{align*}
\end{example}

\begin{example}(Type $E_8$) 
The algebra $\mathbb{A}_\Gamma$ can be presented as a path algebra over $\mathbb{C}$ for the following quiver and relations.
\begin{align*}
\begin{aligned}
\begin{tikzpicture} [bend angle=15, looseness=1]
  \node (8) at (0,0) {$8$};
  \node (5) at (0,1.4) {$5$};
  \node (4) at (-1.2,0) {$4$};
  \node (3) at (-2.4,0) {$3$};
  \node (2) at (-3.6,0) {$2$};
   \node (1) at (-4.8,0) {$1$};
   \node (0) at (-4.8,1.4) {$0$};
  \node (7)  at (1.2,0) {$7$};
    \node (6)  at (2.4,0) {$6$};
\draw [->,bend left] (0) to node[right]  {\scriptsize{$a_0$}} (1);
\draw [->,bend left] (1) to node[above]  {\scriptsize{$a_1$}} (2);
\draw [->,bend left] (2) to node[above]  {\scriptsize{$a_2$}} (3);
\draw [->,bend left] (3) to node[above]  {\scriptsize{$a_3$}} (4);
\draw [->,bend left] (4) to node[above]  {\scriptsize{$a_4$}} (8);
\draw [->,bend left] (5) to node[right]  {\scriptsize{$a_5$}} (8);
\draw [->,bend right] (6) to node[above]  {\scriptsize{$a_6$}} (7);
\draw [->,bend right] (7) to node[above]  {\scriptsize{$a_7$}} (8);
\draw [->,bend left] (1) to node[left]  {\scriptsize{$a_0^*$}} (0);
\draw [->,bend left] (2) to node[below]  {\scriptsize{$a_1^*$}} (1);
\draw [->,bend left] (3) to node[below]  {\scriptsize{$a_2^*$}} (2);
\draw [->,bend left] (4) to node[below]  {\scriptsize{$a_3^*$}} (3);
\draw [->,bend left] (8) to node[below]  {\scriptsize{$a_4^*$}} (4);
\draw [->,bend left] (8) to node[left]  {\scriptsize{$a_5^*$}} (5);
\draw [->,bend right] (7) to node[below]  {\scriptsize{$a_6^*$}} (6);
\draw [->,bend right] (8) to node[below]  {\scriptsize{$a_7^*$}} (7);
\end{tikzpicture}
\end{aligned}
& \quad \quad
\begin{aligned}
&a_0a_0^*=0, \quad a_1a_1^*-a_0a_0^*=0, \\
&a_5a_5^*=0,\quad  a_2a_2^*-a_1a_1^*=0, \\
&a_6a_6^*=0, \quad a_7a_7^*-a_6a_6^*=0,\\
&a_3a_3^*-a_2a_2^*=0, \\
&a_4a_4^*-a_3a_3^*=0, \\
&a_4^*a_4+a_5^*a_5+a_7^*a_7=0.
\end{aligned}
\end{align*}
\end{example}

\subsection{Partial resolutions of Kleinian singularities} \label{Section: Partial resolutions presentations}

The construction in Theorem \ref{Theorem:Van den Bergh Tilting} does not just apply just to smooth situations, for example Assumption \ref{ass:fibres} also applies to the partial resolutions of Kleinian singularities $\pi_{\Gamma_C}:Y_{\Gamma_C} \rightarrow X_{\Gamma}$ considered in Section \ref{subsection:Kleinian singularities and resolutions}. 

For a partial resolution $Y_{\Gamma_C} \rightarrow X_{\Gamma}$ we let $A_{\Gamma_C}$ denote the associated basic noncommutative algebra produced by Theorem \ref{Theorem:Van den Bergh Tilting}. For such partial resolutions it was determined by Kalck, Iyama, Wemyss, and Yang \cite{KIWY} that there is an idempotent $e_C \in A_{\Gamma}$ producing $A_{\Gamma_C}$ from $A_{\Gamma}$. 

We recall that the non-free projective modules are in correspondence with the indecomposable curves in the exceptional divisor, see Remark \ref{Remark:KrullSchmidt}.

\begin{proposition}[{\cite[Theorem 4.6.]{KIWY}}]
Let $e_C$ denote the sum of the idempotents corresponding to the extending vertex and the coloured vertices $\bullet$ in $\Gamma_C$, then there is an isomorphism
\[
A_{\Gamma_C} \cong e_C A_{\Gamma} e_C.
\]
\end{proposition}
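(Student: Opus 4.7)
The plan is to exhibit this isomorphism by comparing tilting bundles across the intermediate contraction, using that $\pi_\Gamma$ factors through $\pi_{\Gamma_C}$. Write $\pi_\Gamma = \pi_{\Gamma_C} \circ f$ where $f : Y_\Gamma \to Y_{\Gamma_C}$ is the projective birational morphism whose exceptional locus consists precisely of the curves $C_i$ for $i$ a $\circ$-vertex of $\Gamma_C$. Since $Y_{\Gamma_C}$ has (at worst) Kleinian singularities along the image of this exceptional locus, these are rational singularities, so $\mathbf{R}f_* \mathcal{O}_{Y_\Gamma} \cong \mathcal{O}_{Y_{\Gamma_C}}$.

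First I would identify the summands of $T_\Gamma$ selected by $e_C$. By construction, $T_\Gamma = \mathcal{O}_{Y_\Gamma} \oplus \bigoplus_{i \in |\Gamma|} T_{\Gamma,i}$, with the non-trivial summands $T_{\Gamma,i}$ corresponding to vertices of $\Gamma$ (equivalently, to non-trivial irreducible representations of $G$), and each summand $T_{\Gamma,i}$ can be characterised up to twist by the intersection conditions $\det(T_{\Gamma,i}) \cdot C_j = \delta_{ij}$ with the exceptional curves $C_j$. Analogously, on $Y_{\Gamma_C}$ the basic tilting bundle $T_{\Gamma_C} = \mathcal{O}_{Y_{\Gamma_C}} \oplus T_{\Gamma_C, i_0}$ has its non-trivial summand characterised by the analogous intersection condition with the unique surviving $\bullet$-curve $C_{i_0}$.

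Next I would show the key identification $e_C T_\Gamma \cong f^* T_{\Gamma_C}$. Since the divisor class characterising $T_{\Gamma, i_0}$ has zero intersection with every $\circ$-exceptional curve (all such curves are disjoint from $C_{i_0}$ in the characterisation, or more carefully: the corresponding Weil divisor pushes down and pulls back unchanged because its intersection numbers with the contracted curves vanish), it descends to $Y_{\Gamma_C}$ and equals $f^* T_{\Gamma_C, i_0}$. Combined with $\mathcal{O}_{Y_\Gamma} = f^* \mathcal{O}_{Y_{\Gamma_C}}$ this gives the claim.

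Finally, I would deduce the algebra isomorphism. Using the projection formula together with $\mathbf{R}f_* \mathcal{O}_{Y_\Gamma} \cong \mathcal{O}_{Y_{\Gamma_C}}$, one has for any locally free $\mathcal{F}, \mathcal{G}$ on $Y_{\Gamma_C}$ the chain
\begin{equation*}
\Hom_{Y_\Gamma}(f^* \mathcal{F}, f^* \mathcal{G}) \cong \Hom_{Y_{\Gamma_C}}(\mathcal{F}, \mathbf{R}f_* f^* \mathcal{G}) \cong \Hom_{Y_{\Gamma_C}}(\mathcal{F}, \mathcal{G}),
\end{equation*}
so applying this to summands of $T_{\Gamma_C}$ yields $e_C A_\Gamma e_C = \End_{Y_\Gamma}(e_C T_\Gamma)^{\op} \cong \End_{Y_{\Gamma_C}}(T_{\Gamma_C})^{\op} = A_{\Gamma_C}$. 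The main obstacle is step two: pinning down the Van den Bergh tilting summands (which for type $E$ are higher-rank reflexive modules, not line bundles) precisely enough to verify they are pulled back along $f$. This is handled by Wunram's characterisation of the indecomposable summands as the special Cohen–Macaulay modules, whose definition involves only intersection data with the exceptional curves, so the compatibility with the contraction $f$ becomes a direct check on which vertices of $\widetilde{\Gamma}$ survive in the coloured diagram $\Gamma_C$.
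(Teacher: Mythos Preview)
The paper does not prove this proposition; it is quoted directly from \cite[Theorem~4.6]{KIWY}, so there is no in-paper argument to compare against. I will compare your approach to the one in \cite{KIWY}, which is also the viewpoint the paper adopts elsewhere (e.g.\ in the proof of Proposition~\ref{Prop:MCM Equivalence} and Proposition~\ref{Prop:Basechange}).

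Your overall architecture is sound, and step~3 is exactly right: once you know $e_C T_\Gamma \cong f^* T_{\Gamma_C}$, adjunction plus $\mathbf{R}f_*\mathcal{O}_{Y_\Gamma}\cong\mathcal{O}_{Y_{\Gamma_C}}$ finishes the job. The weak point is step~2. The intersection conditions $\det(T_{\Gamma,i})\cdot C_j=\delta_{ij}$ pin down only the determinant line bundle, not the higher-rank summand $T_{\Gamma,i}$ itself, so showing that the determinant descends along $f$ does not show that the bundle does. Your invocation of Wunram points in the right direction but does not close the gap as stated: Wunram's characterisation of special Cohen--Macaulay modules lives on the base $X_\Gamma$, not on the resolutions, so it does not directly give the bundle identification $T_{\Gamma,i_0}\cong f^*T_{\Gamma_C,i_0}$ on $Y_\Gamma$.

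The route taken in \cite{KIWY}, and implicitly throughout this paper, bypasses this issue by pushing everything down to $X_\Gamma$ rather than comparing on the resolutions. Using \cite[Proposition~3.2.10]{3Dflops} one has $\End_{Y_\Gamma}(T_\Gamma)\cong\End_{X_\Gamma}(\pi_{\Gamma *}T_\Gamma)$ and likewise for $Y_{\Gamma_C}$; both pushforwards decompose as sums of the indecomposable MCM $R_\Gamma$-modules $N_i$, indexed by vertices of $\widetilde{\Gamma}$ for the full resolution and by $C$ for the partial one. Then $e_C A_\Gamma e_C\cong\End_{R_\Gamma}\big(\bigoplus_{i\in C}N_i\big)^{\op}\cong A_{\Gamma_C}$ is immediate. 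This is both simpler and avoids the delicate bundle-descent question; if you want to keep your geometric approach on the resolutions, the missing ingredient is precisely this pushforward comparison, which renders the $f^*$ identification unnecessary.
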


For each coloured diagram $\Gamma_C$ occurring in the length classification of flops with $C=\{0,c\} \subset |\widetilde{\Gamma}|$ we present the algebra $e_C \mathcal{A}_{\Gamma} e_C \cong A_{\Gamma_C}$ as the path algebra over $\mathbb{C}$ of a path algebra with relations below. To do this we use the explicit presentations of $A_{\Gamma}$ as quiver with relations given in Section \ref{sec:Kleinian presentations} immediately above and calculate the projective resolutions of various simple objects. 

In particular, the first 3 terms in a minimal graded projective resolution of the simple modules determine the number of arrows and relations required in a minimal presentation as the path algebra of a quiver with relations. Let $P_i^{(d)}$ and $s_i^{(d)}$ denote the projective and simple modules at vertex $i$ in degree $d$ respectively. Then if the minimal projective resolution has the form
 \[
 \dots \rightarrow \bigoplus_{j\in \widetilde{\Gamma}} \bigoplus_{l=1}^{v_{ij}} P_j^{(d_l^{ij})} \rightarrow \bigoplus_{j \in \widetilde{\Gamma}} \bigoplus_{l=1}^{u_{ij}} P_j^{(d_l^{ij})} \rightarrow P_i^{(0)} \rightarrow s_i^{(0)} \rightarrow 0
 \]
then the number of arrows from vertex $i$ to $j$ is $v_{ij}$ with degrees $d^{ij}_1 \dots d^{ij}_{v_{ij}}$ and the number of generating relations with head at vertex $i$ and tail at vertex $j$ is $u_{ij}$ with degree $d^{ij}_1 \dots d^{ij}_{u_{ij}}$. See \cite[Theorem 7.3]{MinimalButlerKing}  for such a description of the minimal resolution or see \cite[Section 3]{BuanIyamaReitenSmith} for a description of the number of generators and relations required for the presentation of an algebra in terms of $\dim \Ext^k_{e_CA_{\Gamma}e_C}(s_0,s_c)$ and $ \dim \Ext^k_{e_CA_{\Gamma}e_C}(s_0,s_c)$ for $k=1,2$ which can be calculated from these resolutions (although this description is given in the complete case it also applies in the graded case with arrows in degree $>0$).

In particular, in our examples each algebra $A_{\Gamma}$ is $\mathbb{Z}$-graded, and each graded simple module $s_i^{(d)}$ has a graded projective resolution
 \[
 0 \rightarrow P_i^{(d+2)} \rightarrow \bigoplus_{j\in \widetilde{\Gamma}} (P_j^{(d+1)})^{\oplus \gamma_{ij}} \rightarrow P_i^{(d)} \rightarrow s_i^{(d)} \rightarrow 0 
 \]
where $\gamma_{ij}$ equals the number of edges joining $i$ and $j$ in the corresponding extended Dynkin diagram $\widetilde{\Gamma}$.

Starting with the resolutions of $s_c^{(0)}$ and $s_0^{(0)}$ by repeated approximation these resolutions can be used to construct two left-unbounded graded complexes consisting of $P_0^{(d)}$ and $P_c^{(d)}$, with $s_c^{(0)}$ and $s_0^{(0)}$ on the right, and with cohomology sheaves direct sums of $s_j^{(d)}$ for $j \neq 0,c$. After restriction to $e_C A_{\Gamma} e_C$ such complexes become acyclic and (unbounded) projective resolutions of $s_0^{(0)}$ and $s_c^{(0)}$, and hence these resolutions determine the presentation.

For the diagram $\Gamma_C$ associated to each length $l$ below we list the first 3 terms in this graded projective resolution, we give a presentation $B:=\mathbb{C}Q/I$ with an implicit surjective graded morphism $ B \rightarrow e_C A_{\Gamma} e_C$, and as this presentation has the correct number of arrows and relations in the correct degrees it is in fact an isomorphism. 

\begin{example}(Type $A_1)$
In this case $A_{\Gamma} \cong A_{\Gamma_C}$, so the algebra $ A_{\Gamma_C}$ the resolutions of the simples $s_0^{(0)}$ and $s_1^{(0)}$ in $e_C A_{\Gamma} e_C$ are 
for \[
0 \rightarrow P_0^{(2)} \rightarrow (P_1^{(1)})^{\oplus 2} \rightarrow P_0^{(0)} \rightarrow s_0^{(0)} \quad \text{and} \quad
0 \rightarrow P_1^{(2)} \rightarrow (P_0^{(1)})^{\oplus 2} \rightarrow P_1^{(0)} \rightarrow s_1^{(0)}
\]
and $A_{\Gamma} \cong A_{\Gamma_C}$ can be presented as the path algebra over $\mathbb{C}$ of the following quiver with relations
\begin{align*}
\begin{aligned}
\begin{tikzpicture} [bend angle=15, looseness=1]
\node (C0) at (0,0)  {$0$};
\node (C1) at (2,0)  {$1$};
\draw [->,bend left=35] (C0) to node[gap]  {\scriptsize{$a_0$}} (C1);
\draw [->,bend left] (C0) to node[gap]  {\scriptsize{$a_1^*$}} (C1);
\draw [->,bend left] (C1) to node[gap]  {\scriptsize{$a_0^*$}} (C0);
\draw [->,bend left=35] (C1) to node[gap]  {\scriptsize{$a_1$}} (C0);
\end{tikzpicture}
\end{aligned}
& \quad \quad
\begin{aligned}
&a_0a_0^*-a_1^*a_1=0 \\
&a_1a_1^*-a_0^*a_0=0 
\end{aligned}
\end{align*}
\end{example}

\begin{example}(Type $D_4$) \label{Example:length 2 singular} 
For the idempotent $e_C:= e_0 + e_4 \in A_{\Gamma}$, the beginning of the resolutions of the simples $s_0^{(0)}$ and $s_4^{(0)}$ in $e_C A_{\Gamma} e_C$ are 
\[
P_0^{(2)} \rightarrow P_4^{(1)} \rightarrow P_0^{(0)} \rightarrow s_0^{(0)}.
\]
and 
\[
(P_4^{(4)})^{\oplus 3} \rightarrow (P_4^{(2)})^{\oplus 2} \oplus P_0^{(1)} \rightarrow P_4^{(0)} \rightarrow s_4^{(0)}.
\]
Then the algebra $e_C A_{\Gamma} e_C$ can be presented as 
\begin{align*}
\begin{aligned}
\begin{tikzpicture} [bend angle=15, looseness=1]
\node (C0) at (0,0)  {$0$};
\node (C1) at (2,0)  {$4$};
\draw [->,bend left] (C0) to node[above]  {\scriptsize{$a_0$}} (C1);
\draw [->,bend left] (C1) to node[below]  {\scriptsize{$a_0^*$}} (C0);
\draw [->, looseness=24, in=52, out=128,loop] (C1) to node[above] {$\scriptstyle{b}$} (C1);
\draw [->, looseness=24, in=-128, out=-52,loop] (C1) to node[below] {$\scriptstyle{c}$} (C1);
\end{tikzpicture}
\end{aligned}
& \quad \quad
\begin{aligned}
&a_0a_0^*=0 & & (a_0^*a_0+b+c)^2=0  \\ 
&b^2=0  & &  c^2=0
\end{aligned}
\end{align*}
where $b=a_1^*a_1,$ and $c=a_2^* a_2$. 
\end{example}

\begin{example}(Type $E_6$)
For the idempotent $e_C:=e_0+e_6 \in A_{\Gamma}$. The beginning of the resolutions of the simples $s_0^{(0)}$ and $s_6^{(0)}$ in $e_C A_{\Gamma} e_C$ are 
\[
P_6^{(4)} \oplus P_0^{(4)}   \rightarrow P_6^{(2)} \rightarrow P_0^{(0)} \rightarrow s_0^{(0)}
\]
and 
\[
P_0^{(4)} \oplus P_6^{(4)} \oplus (P_6^{(6)})^{\oplus 2} \rightarrow P_0^{(2)}\oplus (P_6^{(2)})^{\oplus 2} \rightarrow P_6^{(0)} \rightarrow s_6^{(0)}.
\]
The algebra $e_C A_\Gamma e_C$ can be presented as the path algebra over $\mathbb{C}$ of the following quiver with relations.
\begin{align*}
\begin{aligned}
\begin{tikzpicture} [bend angle=15, looseness=1]
\node (C0) at (0,0)  {$0$};
\node (C1) at (2,0)  {$6$};
\draw [->,bend left] (C0) to node[above]  {\scriptsize{$a$}} (C1);
\draw [->,bend left] (C1) to node[below]  {\scriptsize{$a^*$}} (C0);
\draw [->, looseness=24, in=52, out=128,loop] (C1) to node[above] {$\scriptstyle{b}$} (C1);
\draw [->, looseness=24, in=-128, out=-52,loop] (C1) to node[below] {$\scriptstyle{c}$} (C1);
\end{tikzpicture}
\end{aligned}
& \quad \quad
\begin{aligned}
& aa^*=0, & & a^*a=(b+c)^2, \\ 
& (b+c) a^*=0, & & a (b+c) = 0,  \\ 
&b^3=0, & & c^3=0 & &
\end{aligned}
\end{align*}
where
\[
a=a_0a_1, \quad a^*=a_1^* a_0^*, \quad b=a_3^*a_3, \quad \text{and} \quad c=a_5^*a_5.
\]
\end{example}

\begin{example}(Type $E_7$) 
For $e_C=e_0+e_7 \in A_{\Gamma}$ the beginning of the resolutions of the simples $s_0^{(0)}$ and $s_7^{(0)}$ in $e_C A_{\Gamma} e_C$ are 
\[
P_0^{(6)} \oplus P_7^{(5)}    \rightarrow P_7^{(3)} \rightarrow P_0^{(0)} \rightarrow s_0^{(0)}
\]
and 
\[
P_0^{(5)} \oplus P_7^{(4)} \oplus P_7^{(6)} \oplus P_7^{(8)} \rightarrow  P_0^{(3)} \oplus (P_7^{(2)})^{\oplus 2} \rightarrow P_7^{(0)} \rightarrow s_7^{(0)}.
\]
Then the algebra $e_C A_{\Gamma} e_C$ can be presented as the path algebra over $\mathbb{C}$ of the following quiver with relations
\begin{align*}
\begin{aligned}
\begin{tikzpicture} [bend angle=15, looseness=1]
\node (C0) at (0,0)  {$0$};
\node (C1) at (2,0)  {$7$};
\draw [->,bend left] (C0) to node[above]  {\scriptsize{$a$}} (C1);
\draw [->,bend left] (C1) to node[below]  {\scriptsize{$a^*$}} (C0);
\draw [->, looseness=24, in=52, out=128,loop] (C1) to node[above] {$\scriptstyle{b}$} (C1);
\draw [->, looseness=24, in=-128, out=-52,loop] (C1) to node[below] {$\scriptstyle{c}$} (C1);
\end{tikzpicture}
\end{aligned}
& \quad \quad
\begin{aligned}
& aa^*=0, && a^*a=-(b+c)^3,  \\
& (b+c) a^*=0,  && a (b+c) = 0,  \\ 
&b^2=0, && c^4=0
\end{aligned}
\end{align*}
where
\[
a=a_0a_1a_2, \quad a^*=a_2^*a_1^* a_0^*, \quad b=a_3^*a_3, \quad \text{and} \quad c=a_6^*a_6.
\]
\end{example}

\begin{example}(Type $E_8$) 
Let $e_C=e_0+e_4 \in A_\Gamma$. The beginning of the resolutions of the simples $s_0^{(0)}$ and $s_4^{(0)}$ in $e_C A_{\Gamma} e_C$ are 
\[
P_0^{(8)} \oplus P_4^{(6)}    \rightarrow P_4^{(4)} \rightarrow P_0^{(0)} \rightarrow s_0^{(0)}
\]
and 
\[
P_0^{(6)} \oplus (P_4^{(8)})^{\oplus 2} \oplus P_4^{(10)} \rightarrow  P_0^{(4)} \oplus P_4^{(2)} \oplus P_4^{(4)} \rightarrow P_4^{(0)} \rightarrow s_4^{(0)}.
\]
Then $e_C A_{\Gamma} e_C$ can be presented as the path algebra over $\mathbb{C}$ of the following  quiver with relations.

\begin{align*}
\begin{aligned}
\begin{tikzpicture} [bend angle=15, looseness=1]
\node (C0) at (0,0)  {$0$};
\node (C1) at (2,0)  {$4$};
\draw [->,bend left] (C0) to node[above]  {\scriptsize{$a$}} (C1);
\draw [->,bend left] (C1) to node[below]  {\scriptsize{$a^*$}} (C0);
\draw [->, looseness=24, in=52, out=128,loop] (C1) to node[above] {$\scriptstyle{b}$} (C1);
\draw [->, looseness=24, in=-128, out=-52,loop] (C1) to node[below] {$\scriptstyle{c}$} (C1);
\end{tikzpicture}
\end{aligned}
& \quad \quad
\begin{aligned}
&aa^*=0,  & &a^*a=b^4, \\
&a b =0, & & b a^* =0, \\
& c b c +c^2 b +c b^3=0, & & (c+ b^2)^2+b c b = 0 \\
\end{aligned}
\end{align*}
where
\[
a=a_0a_1a_2a_3, \quad a^*=a_3^*a_2^*a_1^*a_0^*,  \quad b=a_4a_4^*, \quad \text{and} \quad  c=a_4a_5^*a_5a_4^*.
\]

Let $e_C:=e_0+e_8 \in A_{\Gamma}$. The beginning of the resolutions of the simples $s_0^{(0)}$ and $s_8^{(0)}$ in $e_C A_{\Gamma} e_C$ are 
\[
P_0^{(10)} \oplus P_8^{(7)}    \rightarrow P_8^{(5)} \rightarrow P_0^{(0)} \rightarrow s_0^{(0)}
\]
and 
\[
P_0^{(7)} \oplus P_8^{(4)} \oplus P_8^{(6)} \oplus P_8^{(10)} \rightarrow   P_0^{(5)} \oplus (P_8^{(2)})^{\oplus 2} \rightarrow P_8^{(0)} \rightarrow s_8^{(0)}.
\]
Then the algebra $e_C \mathcal{A}_\Gamma e_C$ can be presented as a path algebra over $\mathbb{C}$ of the following quiver with relations.
\begin{align*} 
\begin{aligned}
\begin{tikzpicture} [bend angle=15, looseness=1]
\node (C0) at (0,0)  {$0$};
\node (C1) at (2,0)  {$8$};
\draw [->,bend left] (C0) to node[above]  {\scriptsize{$a$}} (C1);
\draw [->,bend left] (C1) to node[below]  {\scriptsize{$a^*$}} (C0);
\draw [->, looseness=24, in=52, out=128,loop] (C1) to node[above] {$\scriptstyle{b}$} (C1);
\draw [->, looseness=24, in=-128, out=-52,loop] (C1) to node[below] {$\scriptstyle{c}$} (C1);
\end{tikzpicture}
\end{aligned}
& \quad \quad
\begin{aligned}
& aa^*=0, & & a^*a=-(b+c)^5, && \\ 
& (b+c) a^*=0, & & a  (b+c) = 0, \\ 
&b^2=0, & &  c^3=0 &&
\end{aligned}
\end{align*}
where 
\[
a=a_0a_1a_2a_3a_4,\quad a^*=a_4^*a_3^*a_2^*a_1^*a_0^*, \quad b=a_5^*a_5, \quad \text{and} \quad c=a_7^*a_7.
\]
\end{example}

\begin{remark}
We note that the quotients $A_{\Gamma_C}/(A_{\Gamma_C}e_0 A_{\Gamma_C})$ are the finite dimensional algebras $\Delta_{con}$ considered by Wemyss and Donovan \cite[Remark 3.17]{WemyssDonovan}, where the dimensions are calculated as 1, 4, 9, 24, 40 and 60. This dimension can easily be calculated from the presentations above using the methods outlined in Appendix \ref{App:Dimension of contraction algebras}.
\end{remark}

\subsection{Simultaneous resolutions of deformations of Kleinian singularities} \label{sec:Simultaneous Presentations} Similarly, the construction in Theorem \ref{Theorem:Van den Bergh Tilting} does not just apply to surfaces. For example, Assumption \ref{ass:fibres} also holds for the higher  dimensional schemes realised as deformations of surfaces and recalled in Section \ref{Subsection:Deformations of Kleinian singularities and resolutions}; we reuse the notation from that section. 

In particular, a simultaneous resolution $\mathcal{Y}_{\Gamma} \rightarrow \mathcal{X}_{\Gamma} \times_{\mathfrak{h}_{\Gamma}/W_{\Gamma}} \mathfrak{h}_{\Gamma}$ satisfies Assumption \ref{ass:fibres} and is flat over $\mathfrak{h}_{\Gamma}$. In this case we let $\mathcal{A}_{\Gamma}$ denote the associated basic algebra  produced by Theorem \ref{Theorem:Van den Bergh Tilting}, and this is a  a flat $\mathbb{H}_{\Gamma}=\textnormal{Sym}^{\bullet}(\mathfrak{h}_{\Gamma}^{\vee})$-algebra with central fibre $A_{\Gamma}$. We recall that $\mathbb{H}_{\Gamma}$ is defined as a polynomial ring in Section \ref{Sec:Lie Alg Com}.

A presentation of $\mathcal{A}_\Gamma$ was determined by Crawley-Boevey and Holland, \cite{CBH}, by calculating deformations of the projective algebra $A_{\Gamma}$ as a noncommutative algebra and determining the geometric deformations. We recall the definition of the quiver $\widetilde{Q}$ and preprojective algebra from Section \ref{sec:Kleinian presentations}.

\begin{proposition}[Crawley-Boevey and Holland \cite{CBH}] \label{Prop:Deformation Preprojective}
There is an isomorphism of algebras
\[
\mathcal{A}_{\Gamma} \cong \frac{\mathbb{H}_{\Gamma}\widetilde{Q}}{\langle \sum_{a \in Q_1} [a,a^*] - \sum_{i \in Q_0} t_i e_i \rangle}.
\]
\end{proposition}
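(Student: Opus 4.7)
The plan is to exhibit a surjective graded $\mathbb{H}_{\Gamma}$-algebra homomorphism
\[
\Phi : \Pi^{t} := \frac{\mathbb{H}_{\Gamma}\widetilde{Q}}{\langle \sum_{a \in Q_1}[a,a^*] - \sum_{i \in Q_0} t_i e_i \rangle} \longrightarrow \mathcal{A}_{\Gamma},
\]
and then to conclude it is an isomorphism by comparing central fibres and using flatness. First I would exploit the flat base change property of Remark \ref{Remark:Flat Base Change}: the tilting bundle $T$ on $Y_{\Gamma}$ realising $A_{\Gamma}\cong \End_{Y_\Gamma}(T)^{\op}$ pulls back to a tilting bundle $\mathcal{T}$ on $\mathcal{Y}_{\Gamma}$ with $\mathcal{A}_{\Gamma}\cong \End_{\mathcal{Y}_\Gamma}(\mathcal{T})^{\op}$, flat over $\mathbb{H}_{\Gamma}$ with central fibre $A_{\Gamma}$. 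The $\mathbb{C}^*$-action on $\mathcal{Y}_{\Gamma}\to \mathfrak{h}_{\Gamma}$ equips $\mathcal{A}_{\Gamma}$ with a compatible grading, in which $t_i$ sits in degree $2$.

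Next I would lift the defining data of $A_{\Gamma}$ to $\mathcal{A}_{\Gamma}$. The vertex idempotents $e_i \in A_{\Gamma}$ lift to orthogonal idempotents summing to $1$ in $\mathcal{A}_{\Gamma}$ (standard idempotent lifting across the graded flat $\mathbb{H}_{\Gamma}$-algebra structure), and each arrow $a, a^*$ of $\widetilde{Q}$ admits a $\mathbb{C}^*$-equivariant degree-$1$ lift to $\mathcal{A}_{\Gamma}$. Graded Nakayama, applied to the flat graded $\mathbb{H}_{\Gamma}$-algebra $\mathcal{A}_{\Gamma}$, then guarantees that the $e_i$ together with these lifts generate $\mathcal{A}_{\Gamma}$ over $\mathbb{H}_{\Gamma}$. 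Consequently there is a surjective graded $\mathbb{H}_{\Gamma}$-algebra map $\mathbb{H}_{\Gamma}\widetilde{Q}\twoheadrightarrow \mathcal{A}_{\Gamma}$, and the problem reduces to identifying its kernel.

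The key step, and the main obstacle, is matching relations. Each element $r_i := e_i\bigl(\sum_{a\in Q_1}[a,a^*]\bigr)e_i \in \mathcal{A}_{\Gamma}$ has degree $2$ and reduces to $0$ at the central fibre, since the undeformed preprojective relations hold for $A_{\Gamma}$. Flatness combined with the grading forces $r_i = \lambda_i \, e_i$ for some linear form $\lambda_i \in \mathbb{H}_{\Gamma}^{2}=\mathfrak{h}_{\Gamma}^\vee$. The delicate point is showing $\lambda_i = t_i$: this uses the versal deformation property, together with the identification of the space of graded first-order noncommutative deformations of $A_{\Gamma}$ (deformations of the preprojective relation at each vertex) with $\bigoplus_{i} \mathbb{C}\,e_i \subset Z(A_{\Gamma})$, matched canonically with $\mathfrak{h}_\Gamma^\vee$ via the Kodaira-Spencer map of the geometric family $\mathcal{Y}_{\Gamma}\to \mathfrak{h}_{\Gamma}$. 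After this basis identification $\Phi$ is well-defined and surjective. Finally, both $\Pi^{t}$ and $\mathcal{A}_{\Gamma}$ are flat graded $\mathbb{H}_{\Gamma}$-algebras (flatness of $\Pi^{t}$ being the content of Crawley-Boevey and Holland's theorem) agreeing at the central fibre via the McKay isomorphism $\Pi^{0}\cong A_{\Gamma}$, so a standard flatness argument (a surjection of flat modules that is an isomorphism modulo the maximal ideal is an isomorphism) upgrades $\Phi$ to the required isomorphism.
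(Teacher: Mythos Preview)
The paper does not give a proof of this proposition; it records it as a result of Crawley--Boevey and Holland and only remarks that their method is ``calculating deformations of the preprojective algebra $A_\Gamma$ as a noncommutative algebra and determining the geometric deformations''. Your sketch is a faithful reconstruction of that strategy: lift generators, use the grading to see that each vertex relation deforms by a degree-$2$ central element $\lambda_i e_i$, and then conclude by a flatness/Nakayama argument.

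Two comments. First, your invocation of Remark~\ref{Remark:Flat Base Change} is in the wrong direction: that remark concerns flat base change in $X$, whereas the inclusion of the central fibre $X_\Gamma \hookrightarrow \mathcal{X}'_\Gamma$ is not flat, so one does not ``pull back'' $T$ from $Y_\Gamma$ to $\mathcal{Y}_\Gamma$. Rather, Van den Bergh's construction applied directly to $\mathcal{Y}_\Gamma \to \mathcal{X}'_\Gamma$ produces a tilting bundle $\mathcal{T}$, and one checks that its restriction to the central fibre is (a basic form of) $T$; this is what makes $\mathcal{A}_\Gamma$ a flat $\mathbb{H}_\Gamma$-deformation of $A_\Gamma$. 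Second, the step you flag as delicate really is the substance of the result: your degree argument shows only that $r_i = \lambda_i e_i$ for \emph{some} linear forms $\lambda_i \in \mathfrak{h}_\Gamma^\vee$, and one still needs that the assignment $t_i \mapsto \lambda_i$ is the identity (not merely a linear automorphism of $\mathfrak{h}_\Gamma^\vee$, and in particular compatible with $\sum_i \omega_i t_i = 0$). Establishing this is exactly the matching of noncommutative and geometric first-order deformations carried out in \cite{CBH}, so your outline correctly locates where the cited theorem does the work.
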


These algebras are also positively graded, with the arrows in degree 1 and the polynomial generators $t_i$ in degree 2.The algebra $\mathcal{A}_\Gamma$ is also a flat graded, and hence free, $\mathbb{H}_{\Gamma}$-module.

We now list the algebras corresponding to Dynkin types $A_1, \, D_4, \, E_6, \, E_7$, and $E_8$.

\begin{example}(Type $A_1$)
The algebra $\mathcal{A}_{\Gamma}$ can be presented as the path algebra over
\[
\mathbb{H}_{\Gamma}:=\frac{\mathbb{C}[t_0,t_1]}{(t_0+t_1)} \cong \mathbb{C}[t].\]
of the following quiver with relations
\begin{align*}
\begin{aligned}
\begin{tikzpicture} [bend angle=15, looseness=1]
\node (C0) at (0,0)  {$0$};
\node (C1) at (2,0)  {$1$};
\draw [->,bend left=35] (C0) to node[gap]  {\scriptsize{$a_0$}} (C1);
\draw [->,bend left] (C0) to node[gap]  {\scriptsize{$a_1^*$}} (C1);
\draw [->,bend left] (C1) to node[gap]  {\scriptsize{$a_0^*$}} (C0);
\draw [->,bend left=35] (C1) to node[gap]  {\scriptsize{$a_1$}} (C0);
\end{tikzpicture}
\end{aligned}
& \quad \quad
\begin{aligned}
&a_0a_0^*-a_1^*a_1=t e_0, \\
&a_1a_1^*-a_0^*a_0=-te_1.
\end{aligned}
\end{align*}
\end{example}

\begin{example}(Type $D_4$)  The algebra $\mathcal{A}_\Gamma$ can be presented as the path algebra over
\[
\mathbb{H}_{\Gamma}:= \frac{\mathbb{C}[t_0,t_1,t_2,t_3,t_4]}{(t_0+t_1+t_2+t_3+2t_4)}
\]
of the following quiver with relations.

\begin{align*}
\begin{aligned}
\begin{tikzpicture} [bend angle=15, looseness=1]
\node (C0) at (-1.3,-1.3)  {$0$};
\node (C1) at (-1.3,1.3)  {$1$};
\node (C2) at (1.3,1.3)  {$2$};
\node (C3) at (1.3,-1.3)  {$3$};
\node (C4) at (0,0)  {$4$};
\draw [->,bend left] (C0) to node[gap]  {\scriptsize{$a_0$}} (C4);
\draw [->,bend left] (C4) to node[gap]  {\scriptsize{$a_0^*$}} (C0);
\draw [->,bend left] (C1) to node[gap]  {\scriptsize{$a_1$}} (C4);
\draw [->,bend left] (C4) to node[gap]  {\scriptsize{$a_1^*$}} (C1);
\draw [->,bend left] (C2) to node[gap]  {\scriptsize{$a_2$}} (C4);
\draw [->,bend left] (C4) to node[gap]  {\scriptsize{$a_2^*$}} (C2);
\draw [->,bend left] (C3) to node[gap]  {\scriptsize{$a_3$}} (C4);
\draw [->,bend left] (C4) to node[gap]  {\scriptsize{$a_3^*$}} (C3);
\end{tikzpicture}
\end{aligned}
& \quad \quad
\begin{aligned}
&a_0a_0^*=t_0 e_0, \quad a_1a_1^*=t_1e_1, \\ 
&a_2a_2^*=t_2e_2, \quad a_3a_3^*=t_3e_3, \\ 
&a_0^*a_0+a_1^*a_1+a_2^*a_2+a_3^*a_3=-t_4e_4.\\ 
\end{aligned}
\end{align*}
\end{example}

\begin{example}(Type $E_6$)
The algebra $\mathcal{A}_{\Gamma}$ can be presented as the path algebra over
\[
\mathbb{H}_{\Gamma}:=\frac{\mathbb{C}[t_0,t_1,t_2,t_3,t_4,t_5,t_6]}{(t_0+2t_1+t_2+2t_3+t_4+2t_5+3t_6)}
\]
of the following quiver with relations.
\begin{align*}
\begin{aligned}
\begin{tikzpicture} [bend angle=15, looseness=1]
  \node (6) at (0,0) {$6$};
  \node (3) at (-1.4,0) {$3$};
   \node (2) at (-2.8,0) {$2$};
  \node (1) at (0,1.4) {$1$};
  \node (0) at (-1.4,1.4) {$0$};
  \node (5)  at (1.4,0) {$5$};
    \node (4)  at (2.8,0) {$4$};
\draw [->,bend left] (0) to node[above]  {\scriptsize{$a_0$}} (1);
\draw [->,bend left] (1) to node[right]  {\scriptsize{$a_1$}} (6);
\draw [->,bend left] (2) to node[above]  {\scriptsize{$a_2$}} (3);
\draw [->,bend left] (3) to node[above]  {\scriptsize{$a_3$}} (6);
\draw [->,bend left] (4) to node[below]  {\scriptsize{$a_4$}} (5);
\draw [->,bend left] (5) to node[below]  {\scriptsize{$a_5$}} (6);
\draw [->,bend left] (1) to node[below]  {\scriptsize{$a_0^*$}} (0);
\draw [->,bend left] (6) to node[left]  {\scriptsize{$a_1^*$}} (1);
\draw [->,bend left] (3) to node[below]  {\scriptsize{$a_2^*$}} (2);
\draw [->,bend left] (6) to node[below]  {\scriptsize{$a_3^*$}} (3);
\draw [->,bend left] (5) to node[above]  {\scriptsize{$a_4^*$}} (4);
\draw [->,bend left] (6) to node[above]  {\scriptsize{$a_5^*$}} (5);
\end{tikzpicture}
\end{aligned}
& \quad \quad
\begin{aligned}
&a_0a_0^*=t_0e_0, \quad  a_1a_1^*-a_0^*a_0=t_1e_1,  \\
&a_2a_2^*=t_2e_2, \quad a_3a_3^* - a_2^*a_2=t_2e_3,  \\
&a_4a_4^*=t_4e_4, \quad a_5a_5^* - a_4^*a_4=t_3e_5, \\
&a_1^*a_1+a_3^*a_3+a_5^*a_5=-t_6e_6.
\end{aligned}
\end{align*}
\end{example}

\begin{example}(Type $E_7$) The algebra $\mathcal{A}_\Gamma$ can be presented as the path algebra over
\[
\mathbb{H}_{\Gamma}:=\frac{\mathbb{C}[t_0,t_1,t_2,t_3,t_4,t_5,t_6,t_7]}{(t_0+2t_1+4t_2+3t_3+2t_4+3t_5+2t_6+ t_7)}
\]
of the following quiver with relations.

\begin{align*}
\begin{aligned}
\begin{tikzpicture} [bend angle=15, looseness=1]
  \node (7) at (0,0) {$7$};
  \node (3) at (0,1.4) {$3$};
  \node (2) at (-1.2,0) {$2$};
  \node (1) at (-2.4,0) {$1$};
  \node (0) at (-2.4,1.4) {$0$};
  \node (6)  at (1.2,0) {$6$};
  \node (5)  at (2.4,0) {$5$};
  \node (4)  at (3.6,0) {$4$};
\draw [->,bend left] (0) to node[right]  {\scriptsize{$a_0$}} (1);
\draw [->,bend left] (1) to node[above]  {\scriptsize{$a_1$}} (2);
\draw [->,bend left] (2) to node[above]  {\scriptsize{$a_2$}} (7);
\draw [->,bend left] (3) to node[right]  {\scriptsize{$a_3$}} (7);
\draw [->,bend left] (4) to node[below]  {\scriptsize{$a_4$}} (5);
\draw [->,bend left] (5) to node[below]  {\scriptsize{$a_5$}} (6);
\draw [->,bend left] (6) to node[below]  {\scriptsize{$a_6$}} (7);
\draw [->,bend left] (1) to node[left]  {\scriptsize{$a_0^*$}} (0);
\draw [->,bend left] (2) to node[below]  {\scriptsize{$a_1^*$}} (1);
\draw [->,bend left] (7) to node[below]  {\scriptsize{$a_2^*$}} (2);
\draw [->,bend left] (7) to node[left]  {\scriptsize{$a_3^*$}} (3);
\draw [->,bend left] (5) to node[above]  {\scriptsize{$a_4^*$}} (4);
\draw [->,bend left] (6) to node[above]  {\scriptsize{$a_5^*$}} (5);
\draw [->,bend left] (7) to node[above]  {\scriptsize{$a_6^*$}} (6);
\end{tikzpicture}
\end{aligned}
& \quad \quad
\begin{aligned}
&a_0a_0^*=t_0e_0, \quad a_1a_1^*-a_0^*a_0=t_1e_1,  \\
&a_3a_3^*=t_3e_3, \quad a_2a_2^*-a_1^*a_1=t_2e_2\\
&a_4a_4^*=t_4e_4, \quad a_5a_5^*-a_4^*a_4=t_5e_5, \\
&a_6a_6^*-a_5^*a_5=t_6e_6, \\
&a_2^*a_2+a_3^*a_3+a_5^*a_5=-t_7e_7.
\end{aligned}
\end{align*}
\end{example}

\begin{example}(Type $E_8$) The algebra $\mathcal{A}_\Gamma$ can be presented as the path algebra over
\[
\mathbb{H}_{\Gamma}:=\frac{\mathbb{C}[t_0,t_1,t_2,t_3,t_4,t_5,t_6,t_7,t_8]}{(t_0+2t_1+3t_2+4t_3+5t_4+3t_5+2t_6+4t_7+6t_8)}
\]
of the following quiver with relations.
\begin{align*}
\begin{aligned}
\begin{tikzpicture} [bend angle=15, looseness=1]
  \node (8) at (0,0) {$8$};
  \node (5) at (0,1.4) {$5$};
  \node (4) at (-1.1,0) {$4$};
  \node (3) at (-2.2,0) {$3$};
  \node (2) at (-3.3,0) {$2$};
   \node (1) at (-4.4,0) {$1$};
   \node (0) at (-4.4,1.4) {$0$};
  \node (7)  at (1.1,0) {$7$};
    \node (6)  at (2.2,0) {$6$};
\draw [->,bend left] (0) to node[right]  {\scriptsize{$a_0$}} (1);
\draw [->,bend left] (1) to node[above]  {\scriptsize{$a_1$}} (2);
\draw [->,bend left] (2) to node[above]  {\scriptsize{$a_2$}} (3);
\draw [->,bend left] (3) to node[above]  {\scriptsize{$a_3$}} (4);
\draw [->,bend left] (4) to node[above]  {\scriptsize{$a_4$}} (8);
\draw [->,bend left] (5) to node[right]  {\scriptsize{$a_5$}} (8);
\draw [->,bend right] (6) to node[above]  {\scriptsize{$a_6$}} (7);
\draw [->,bend right] (7) to node[above]  {\scriptsize{$a_7$}} (8);
\draw [->,bend left] (1) to node[left]  {\scriptsize{$a_0^*$}} (0);
\draw [->,bend left] (2) to node[below]  {\scriptsize{$a_1^*$}} (1);
\draw [->,bend left] (3) to node[below]  {\scriptsize{$a_2^*$}} (2);
\draw [->,bend left] (4) to node[below]  {\scriptsize{$a_3^*$}} (3);
\draw [->,bend left] (8) to node[below]  {\scriptsize{$a_4^*$}} (4);
\draw [->,bend left] (8) to node[left]  {\scriptsize{$a_5^*$}} (5);
\draw [->,bend right] (7) to node[below]  {\scriptsize{$a_6^*$}} (6);
\draw [->,bend right] (8) to node[below]  {\scriptsize{$a_7^*$}} (7);
\end{tikzpicture}
\end{aligned}
& \quad 
\begin{aligned}
&a_0a_0^*=t_0e_0, \quad a_1a_1^*-a_0a_0^*=t_1e_1, \\
&a_5a_5^*=t_5e_5,\quad  a_2a_2^*-a_1a_1^*=t_2e_2, \\
&a_6a_6^*=t_6e_6, \quad a_7a_7^*-a_6a_6^*=t_7e_7,\\
&a_3a_3^*-a_2a_2^*=t_3e_3, \\
&a_4a_4^*-a_3a_3^*=t_4e_4, \\
&a_4^*a_4+a_5^*a_5+a_7^*a_7=-t_8e_8.
\end{aligned}
\end{align*}
\end{example}

\subsection{Universal flopping algebras}
Making use of the presentations we have already determined, we now find a presentation of the the universal floppping algebras $\mathcal{A}_l$ and prove Theorem \ref{Theorem: Explicit presentations}.

We recall the universal flop of length $l$ recapped in Section \ref{Subsection:Deformations of Kleinian singularities and resolutions} with associated morphisms $\mathcal{Y}_l \xrightarrow{\pi_l} \mathcal{X}_l \rightarrow \Spec \, \mathbb{H}_l$. We let $\mathcal{A}_l$ denote the corresponding basic algebra produced from the morphism $\pi_l:\mathcal{Y}_l \rightarrow \mathcal{X}_l$ by Theorem \ref{Theorem:Van den Bergh Tilting} and discussed in Section \ref{Subsection: Universal flopping algebras}. Just as the versal deformation $\mathcal{Y}_{\Gamma_C} \cong \mathcal{Y}_l$ can be produced from the versal deformation $\mathcal{Y}_{\Gamma}$ by base change and blow down, we will realise the universal flopping algebra $\mathcal{A}_l$ from the algebra $\mathcal{A}_{\Gamma}$.

We do this by noting a presentation of the $\mathbb{H}_{\Gamma}$-algebra $\mathcal{A}_{l} \otimes_{\mathbb{H}_l} \mathbb{H}_{\Gamma}$ before recovering the $\mathbb{H}_l$-algebra $\mathcal{A}_l$. We recall from Remark \ref{Remark:KrullSchmidt} that the non-free projective $\mathcal{A}_{\Gamma}$-modules are in one to one correspondence with the irreducible curves in the preimage of the closed point of the completion of $\mathcal{R}$. This is exactly the set of contracting curves occurring in the hyperplane section defined by $\Gamma$. As such there is one non-free projective module corresponding to each vertex in $\Gamma$.

\begin{proposition} \label{Prop:Basechange}
Let $e_C:=e_0+ e_{\bullet}$ be the idempotent corresponding to the 0 vertex and the $\bullet$ curve in $\Gamma_C$. Then 
\[
e_C A_{\Gamma} e_C\text{-}\modcat  \cong {^0}\Per(\mathcal{Y}_{\Gamma_C} \times_{\mathfrak{h}_{\Gamma}/W_{C}}\mathfrak{h}_{\Gamma})
\] 
and
\[
e_C \mathcal{A}_{\Gamma} e_C \otimes_{\mathbb{H}_{\Gamma}} \mathbb{H}_l\text{-}\modcat  \cong {^0}\Per(\mathcal{Y}_l/\mathcal{X}_l).
\]
That is, there is an isomorphism of algebras $\mathcal{A}_l \cong e_C \mathcal{A}_{\Gamma} e_C \otimes_{\mathbb{H}_{\Gamma}} \mathbb{H}_l$.
\end{proposition}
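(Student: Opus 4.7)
The argument can be organised into two steps linked by faithfully flat descent. First I would observe that the two displayed isomorphisms are essentially equivalent. The quotient morphism $\mathfrak{h}_\Gamma \to \mathfrak{h}_\Gamma/W_C \cong \mathfrak{h}_l$ is finite and faithfully flat: since $W_C$ is a Weyl group, hence generated by reflections, Chevalley--Shephard--Todd guarantees that $\mathbb{H}_\Gamma$ is a free $\mathbb{H}_l$-module of rank $|W_C|$. Remark \ref{Remark:Flat Base Change} then says that ${^{0}\Per}$ commutes with this flat base change, so
\[
{^{0}\Per}(\mathcal{Y}_{\Gamma_C}\times_{\mathfrak{h}_\Gamma/W_C}\mathfrak{h}_\Gamma) \;\cong\; (\mathcal{A}_l \otimes_{\mathbb{H}_l}\mathbb{H}_\Gamma)\text{-}\modcat,
\]
and the two displayed module-category equivalences translate into each other. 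The claimed algebra identity $\mathcal{A}_l \cong e_C\mathcal{A}_\Gamma e_C \otimes_{\mathbb{H}_\Gamma}\mathbb{H}_l$ is then the faithfully flat descent statement recovering $\mathcal{A}_l$ from the base change $\mathcal{A}_l\otimes_{\mathbb{H}_l}\mathbb{H}_\Gamma \cong e_C\mathcal{A}_\Gamma e_C$.

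The main content is therefore the first displayed isomorphism. By Remark \ref{Remark:KrullSchmidt} applied in the graded setting, together with the presentations in Section \ref{sec:Simultaneous Presentations}, one has $\mathcal{A}_\Gamma \cong \End_{\mathcal{Y}_\Gamma}(\bigoplus_{i\in|\widetilde{\Gamma}|}\mathcal{T}_i)^{\op}$ with basic indecomposable summands $\mathcal{T}_i$ in bijection with vertices of $\widetilde{\Gamma}$ and $\mathcal{T}_0 = \mathcal{O}_{\mathcal{Y}_\Gamma}$. Writing $\alpha\colon \mathcal{Y}_\Gamma \to \mathcal{Y}_{\Gamma_C}\times_{\mathfrak{h}_\Gamma/W_C}\mathfrak{h}_\Gamma$ for the relative contraction of the $\circ$-curves, I would show that $T_C := \alpha_*\bigl(\bigoplus_{i\in C}\mathcal{T}_i\bigr)$ is a projective generator of ${^{0}\Per}$ on the target with $\End(T_C)^{\op} \cong e_C\mathcal{A}_\Gamma e_C$. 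The endomorphism identity follows from $\mathbf{R}\alpha_*\mathcal{O}_{\mathcal{Y}_\Gamma}\cong \mathcal{O}$ and adjunction, yielding
\[
\End\bigl(\alpha_*\textstyle\bigoplus_{i\in C}\mathcal{T}_i\bigr) \;\cong\; \End\bigl(\textstyle\bigoplus_{i\in C}\mathcal{T}_i\bigr) \;\cong\; e_C\mathcal{A}_\Gamma e_C.
\]

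The hard part is verifying the projective-generator property of $T_C$ in the deformed setting. Over the central fibre of $\mathfrak{h}_\Gamma$ this is precisely the Kalck--Iyama--Wemyss--Yang theorem recalled in Section \ref{Section: Partial resolutions presentations}. To promote it across the whole family I would either (i) invoke flat base change to the central fibre where KIWY applies and then deform, using flatness of $\mathcal{A}_\Gamma$ (and hence of $e_C\mathcal{A}_\Gamma e_C$) over $\mathbb{H}_\Gamma$ together with Nakayama-style lifting available for graded complete local rings (cf.\ Remark \ref{Remark:grading}), or (ii) verify the axioms of Definition \ref{Perverse} fibrewise using semi-continuity of $\Hom$ and $\Ext$ in the flat family. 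The key technical input in either route is that $e_C\mathcal{A}_\Gamma e_C$ remains flat over $\mathbb{H}_\Gamma$, so that idempotent truncation commutes with base change and the central-fibre KIWY identification propagates over the entire $\mathbb{H}_\Gamma$-family.
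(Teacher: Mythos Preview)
Your proposal is correct and follows essentially the same route as the paper: both establish the first equivalence via the KIWY contraction result and then deduce the second by flat base change along $\mathbb{H}_\Gamma/\mathbb{H}_l$ (Remark~\ref{Remark:Flat Base Change}), finally upgrading the Morita equivalence to an algebra isomorphism by basicness. The one minor difference is that where you propose to lift the central-fibre KIWY statement across the family via flatness and Nakayama, the paper simply asserts that the \emph{proof} of \cite[Theorem~4.6]{KIWY} goes through verbatim for $\mathcal{A}_\Gamma$ in place of $A_\Gamma$, since that argument is geometric (about contracting the $\circ$-curves) and does not depend on the base; your deformation route (i) is a valid alternative justification of the same fact.
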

\begin{proof}
Following the proof of \cite[Theorem 4.6]{KIWY} the idempotent $e_C \in \mathcal{A}_{\Gamma}$ corresponding to the 0 vertex and $\bullet$ curve in $\Gamma_C$ defines the algebra $e_C \mathcal{A}_{\Gamma} e_C$ corresponding to contracting all $\circ$ curves and as such $e_C \mathcal{A}_{\Gamma} e_C$-$\modcat \cong {^0}\Per(\mathcal{Y}_{\Gamma_C} \times_{\mathfrak{h}_{\Gamma}/W_{C}}\mathfrak{h}_{\Gamma}).$ Then $e_C\mathcal{A}_{\Gamma}e_C$ is a $\mathbb{H}_{\Gamma}$-algebra, and there is a $W_{C}$ action on $\mathbb{H}_{\Gamma}$ such that $\mathbb{H}_{\Gamma}^{W_{C}} \cong \mathbb{H}_l$ and $\mathbb{H}_{\Gamma}$ is a flat $\mathbb{H}_l$ module. In particular, since ${^0}\Per(Y/X)$ commutes with flat base change in $X$ as noted in Remark \ref{Remark:Flat Base Change} and $\mathcal{Y}_{\Gamma_C} \cong \mathcal{Y}_l$ it follows that 
\[
e_C \mathcal{A}_{\Gamma} e_C \text{-mod} \cong {^0}\Per(\mathcal{Y}_{\Gamma_C} \times_{\mathfrak{h}_{\Gamma}/W_{C}}\mathfrak{h}_{\Gamma}) \cong \mathcal{A}_l \otimes_{\mathbb{H}_l}\mathbb{H}_{\Gamma} \text{-mod}
\]
and hence 
\[
e_C \mathcal{A}_{\Gamma} e_C \otimes_{\mathbb{H}_{\Gamma}} \mathbb{H}_l \text{-mod} \cong \mathcal{A}_l \text{-mod} \cong {^0}\Per(\mathcal{Y}_l/\mathcal{X}_l)
\]
as
$
\mathcal{A}_l \otimes_{\mathbb{H}_l}\mathbb{H}_{\Gamma} \otimes_{\mathbb{H}_{\Gamma}} \mathbb{H}_l \cong \mathcal{A}_l.
$ By transferring the basic projective generators across this abelian equivalence we can deduce that 
\[
\mathcal{A}_l \cong e_C\mathcal{A}_{\Gamma}e_C \otimes_{\mathbb{H}_{\Gamma}} \mathbb{H}_l.
\]
\end{proof}

This discussion allows us to produce the explicit presentations of Theorem \ref{Theorem: Explicit presentations}.
\subsection{Proof of Theorem \ref{Theorem: Explicit presentations}} \label{Section:Proof of Explicit}

In order to prove Theorem \ref{Theorem: Explicit presentations}, in the next 6 examples we calculate a presentation of the universal flopping algebra $\mathcal{A}_{l}$ for each of the cases occurring in the classification of 3-fold flops. For each length $l$ with diagram $\Gamma_C$ we record:
\begin{enumerate}
\item The coloured Dynkin diagram $\Gamma_C$.
\item The Cartan polynomial algebra $\mathbb{H}_{\Gamma}$.
\item The Weyl subgroup $W_C$.
\item The invariant algebra $\mathbb{H}_l=\mathbb{H}^{W_C}_{\Gamma}$.
\item The algebra $e_C \mathcal{A}_{\Gamma} e_C$.
\item The universal flopping algebra $\mathcal{A}_l$.
\end{enumerate}

The presentation of the universal flopping algebra is produced by first realising a presentation of the algebra $e_C \mathcal{A}_{\Gamma} e_C$ as a path algebra over $\mathbb{H}_\Gamma$ of a quiver with relations by using the presentation of $\mathcal{A}_{\Gamma}$ given in Section \ref{sec:Simultaneous Presentations} immediately above. In each case there is a clear homomorphism from the proposed path algebra with relations to $e_C \mathcal{A}_{\Gamma} e_C$. Then both these are flat over $\mathbb{H}_l$  and the homomorphism restricts to an isomorphism over the central fibre, where both algebras are isomorphic to the algebra $A_{\Gamma_C}$ considered in Section \ref{Section: Partial resolutions presentations}. This implies the homomorphism is in fact an isomorphism by Nakayama's lemma (for graded modules) as we can view both algebras as graded $\mathbb{H}_{\Gamma}$ modules and restrict to the unique graded maximal ideal of $\mathbb{H}_{\Gamma}$. Then, having found $e_C \mathcal{A}_{\Gamma} e_C$, we are then able to use Proposition \ref{Prop:Basechange} to realise the algebra $\mathcal{A}_l$ by change of base ring.

\begin{example}(Length 1)
The coloured Dynkin diagram $\Gamma_C$ associated to length 1 is
\[
\begin{tikzpicture}[node distance=1cm, main node/.style={circle,fill=black!100,draw,font=\sffamily\Large\bfseries},bend angle=5, looseness=1]
 \node[main node] (1) at (0,0) {};
\end{tikzpicture}
\]
with associated Cartan polynomial algebra
\[
\mathbb{H}_{\Gamma}:=\frac{\mathbb{C}[t_0,t_1]}{(t_0+t_1)} \cong \mathbb{C}[t].
\]
In this case the universal flopping algebra is actually isomorphic to the algebra $\mathcal{A}_\Gamma$ of Proposition \ref{Prop:Deformation Preprojective}, $e_C=1$, and $\mathcal{A}_1$ can be presented as the path algebra over $\mathbb{H}_{\Gamma}$ of the following quiver with relations
\begin{align*}
\begin{aligned}
\begin{tikzpicture} [bend angle=15, looseness=1]
\node (C0) at (0,0)  {$0$};
\node (C1) at (2,0)  {$1$};
\draw [->,bend left=35] (C0) to node[gap]  {\scriptsize{$a_0$}} (C1);
\draw [->,bend left] (C0) to node[gap]  {\scriptsize{$a_1^*$}} (C1);
\draw [->,bend left] (C1) to node[gap]  {\scriptsize{$a_0^*$}} (C0);
\draw [->,bend left=35] (C1) to node[gap]  {\scriptsize{$a_1$}} (C0);
\end{tikzpicture}
\end{aligned}
& \quad \quad
\begin{aligned}
&a_0a_0^*-a_1^*a_1=t e_0, \\
&a_1a_1^*-a_0^*a_0=-te_1.
\end{aligned}
\end{align*}
\end{example}

The Weyl group $W_{\Gamma} \cong S_2$ is generated by the simple reflection $s_1$ that maps $t$ to $-t$. The subgroup $W_C$ is trivial, so $\mathbb{H}_1:=\mathbb{H}_{\Gamma} \cong \mathbb{C}[t]$.

\begin{example}(Length 2) \label{Example:length 2} The coloured Dynkin diagram $\Gamma_C$ associated to length 2 is
\[
\begin{tikzpicture}[node distance=1cm, main node/.style={circle,fill=black!100,draw,font=\sffamily\Large\bfseries}]
  \node[main node] (1) at (0,0) {};
  \node[circle, draw] (2) at (-1,0) {};
  \node[circle, draw] (3) at (1,0) {};
  \node[circle, draw] (4)  at (0,1) {};
 \draw [ultra thick] (1) to node {} (2);
 \draw [ultra thick] (1) to node {} (3);
 \draw [ultra thick] (1) to node {} (4);
\end{tikzpicture}
\]
with associated Cartan polynomial algebra
\[
\mathbb{H}_{\Gamma}:= \frac{\mathbb{C}[t_0,t_1,t_2,t_3,t_4]}{(t_0+t_1+t_2+t_3+2t_4)}.
\]

The action of the generators of the Weyl group $W_{\Gamma}$ on $\mathbb{H}_{\Gamma}$ is given by the simple reflections $s_1, \,s_2, \, s_3, \, s_4$:
\[
s_i(t_j)= \left\{ \begin{array}{cc} -t_i & \text{if $i=j$}, \\
t_j & \text{otherwise}.
\end{array} \right.
\] The subgroup $W_{C} \cong S_2 \times S_2 \times S_2$ is generated by the three elements $s_1,s_2,s_3$. Hence the elements
\[
t:=t_0, \quad
T_0^{\beta}:=t_1^2/4, \quad
T_0^{\gamma}:=t_2^2/4, \quad \text{and }
T_0^{\delta}:=t_3^2/4 
\]
generate the invariant polynomial ring \[
\mathbb{H}_2:=\mathbb{C}[t,T_0^{\beta},T_0^{\gamma},T_0^{\delta}].
\]
Choosing the idempotent $e_C:= e_0 + e_4$, the algebra $e_C \mathcal{A}_{\Gamma} e_C$ can be presented as 
\begin{align*}
\begin{aligned}
\begin{tikzpicture} [bend angle=15, looseness=1]
\node (C0) at (0,0)  {$0$};
\node (C1) at (2,0)  {$4$};
\draw [->,bend left] (C0) to node[above]  {\scriptsize{$a_0$}} (C1);
\draw [->,bend left] (C1) to node[below]  {\scriptsize{$a_0^*$}} (C0);
\draw [->, looseness=24, in=52, out=128,loop] (C1) to node[above] {$\scriptstyle{b}$} (C1);
\draw [->, looseness=24, in=-38, out=38,loop] (C1) to node[right] {$\scriptstyle{d}$} (C1);
\draw [->, looseness=24, in=-128, out=-52,loop] (C1) to node[below] {$\scriptstyle{c}$} (C1);
\end{tikzpicture}
\end{aligned}
& \quad \quad
\begin{aligned}
&a_0a_0^*=t_0e_0 & &b(b-t_1)=0 \\ 
&c(c-t_2)=0  & &d(d-t_3)=0\\ 
&a_0^*a_0+b+c + d +t_4 e_4=0 \\ 
\end{aligned}
\end{align*}
where $b=a_1^*a_1, \, c=a_2^* a_2$, and $d= a_3^* a_3$. The change of basis
\[
\beta = b- t_1/2, \quad
\gamma = c- t_1/2, \quad \text{and }
\delta = d- t_1/2 
\]
rewrites the quiver and relations as
\begin{align*}
\begin{aligned}
\begin{tikzpicture} [bend angle=15, looseness=1]
\node (C0) at (0,0)  {$0$};
\node (C1) at (2,0)  {$4$};
\draw [->,bend left] (C0) to node[above]  {\scriptsize{$a_0$}} (C1);
\draw [->,bend left] (C1) to node[below]  {\scriptsize{$a_0^*$}} (C0);
\draw [->, looseness=24, in=52, out=128,loop] (C1) to node[above] {$\scriptstyle{\beta}$} (C1);
\draw [->, looseness=24, in=-38, out=38,loop] (C1) to node[right] {$\scriptstyle{\delta}$} (C1);
\draw [->, looseness=24, in=-128, out=-52,loop] (C1) to node[below] {$\scriptstyle{\gamma}$} (C1);
\end{tikzpicture}
\end{aligned}
& \quad \quad
\begin{aligned}
&a_0a_0^*=te_0, & & \beta^2=T_0^{\beta} e_4, \\ 
&\gamma^2=T_0^{\gamma} e_4, & & \delta^2=T_0^{\delta}e_4, \\ 
&a_0^*a_0+\beta+\gamma + \delta = \frac{t}{2} \, e_4. \\ 
\end{aligned}
\end{align*}
This presents the algebra $\mathcal{A}_2:=(e_C \mathcal{A}_{\Gamma} e_C)\otimes_{\mathbb{H}_{\Gamma}} \mathbb{H}_2$ as  claimed in Theorem \ref{Theorem: Explicit presentations}.
\end{example}

\begin{example}(Length 3)
The coloured Dynkin diagram $\Gamma_C$ associated to length 3 is
\[
\begin{tikzpicture}[node distance=1cm, main node/.style={circle,fill=black!100,draw,font=\sffamily\Large\bfseries}]
  \node[main node] (1) at (0,0) {};
  \node[circle, draw] (2) at (0,1) {};
  \node[circle, draw] (3) at (-1,0) {};
  \node[circle, draw] (3') at (-2,0) {};
  \node[circle, draw] (4)  at (1,0) {};
    \node[circle, draw] (4')  at (2,0) {};
 \draw [ultra thick] (1) to node {} (2);
 \draw [ultra thick] (1) to node {} (3);
 \draw [ultra thick] (1) to node {} (4);
  \draw [ultra thick] (4) to node {} (4');
   \draw [ultra thick] (3) to node {} (3');
\end{tikzpicture}
\]
with associated Cartan polynomial algebra
\[
\mathbb{H}_{\Gamma}:=\frac{\mathbb{C}[t_0,t_1,t_2,t_3,t_4,t_5,t_6]}{(t_0+2t_1+t_2+2t_3+t_4+2t_5+3t_6)}.
\]

The Weyl group $W_{\Gamma}$ is generated by the simple reflections $s_i$ for $1 \le i \le 6$. The Weyl subgroup $W_{C} \cong S_2 \times S_3 \times S_3$ is generated by $S_2\cong \{s_1 \}, \, S_{3}\cong \{s_2,s_3\}, \,S_3 \cong \{s_4, s_5\}$.
The action of $W_{C} \cong S_2 \times S_3 \times S_3$ fixes $t:=2t_0+t_1$ and acts by the permutation representation on the set
\[
\{ \tau_1^{\beta}, \tau_2^{\beta}, \tau_3^{\beta} \} \times \{\tau_1^{\gamma}, \tau_2^{\gamma}, \tau_3^{\gamma}  \}\times  \{\tau_1^{\delta}, \tau_2^{\delta} \} 
\] where
\small
\begin{align*}
\begin{aligned}
\tau_1^{\beta}&=(t_2+2t_3)/3,  \\
\tau_2^{\beta}&=(t_2-t_3)/3, \\
\tau_3^{\beta}&=-(2t_2+t_3)/3, 
\end{aligned}
\qquad
\begin{aligned}
\tau_1^{\gamma}&=(t_4+2t_5)/3,  \\
\tau_2^{\gamma}&=(t_4-t_5)/3, \\
\tau_3^{\gamma}&=-(2t_4+t_5)/3,
\end{aligned}
\qquad
\begin{aligned}
\tau_1^{\delta}&=t_1/2,  \\
\tau_2^{\delta}&=-t_1/2. 
\end{aligned}
\end{align*}
\normalsize

Let $\sigma_i(x_1, \dots, x_n)$ denote the degree $i$ elementary symmetric polynomial in the elements $x_1, \dots, x_n$, and then we choose the following set of generators for the invariant algebra $\mathbb{H}^{W_C}_{\Gamma}$:
\small
\begin{align*}
\begin{aligned}
t & :=(2t_0+t_1)/2, \\
T_0^{\delta} &:= -\sigma_2(\tau_1^{\delta}, \tau_2^{\delta}),\\ 
\end{aligned}
\qquad
\begin{aligned}
T^\beta_1 &:=- \sigma_2(\tau_1^{\beta},\tau_2^{\beta},\tau_3^{\beta}), \\ 
T^\beta_0  &:=- \sigma_3(\tau_1^{\beta},\tau_2^{\beta},\tau_3^{\beta}), 
\end{aligned}
\qquad
\begin{aligned}
T^\gamma_1 &:=-\sigma_2(\tau_1^{\gamma},\tau_2^{\gamma},\tau_3^{\gamma}), \\
T^\gamma_0 &:= -\sigma_3(\tau_1^{\gamma},\tau_2^{\gamma},\tau_3^{\gamma}).
\end{aligned}
\end{align*}
\normalsize
Hence
\[
\mathbb{H}_3:= \mathbb{H}^{W_{C}}_{\Gamma} \cong \mathbb{C}[t,T_0^\beta,T_1^\beta,T_0^\gamma,T_1^\gamma,T_0^{\delta}].
\]

Take the idempotent $e_C:=e_0+e_6$. The algebra $e_C \mathcal{A}_\Gamma e_C$ can be presented as the following path algebra with relations over $\mathbb{H}_{\Gamma}$
\begin{align*}
\begin{aligned}
\begin{tikzpicture} [bend angle=15, looseness=1]
\node (C0) at (0,0)  {$0$};
\node (C1) at (2,0)  {$6$};
\draw [->,bend left] (C0) to node[above]  {\scriptsize{$a$}} (C1);
\draw [->,bend left] (C1) to node[below]  {\scriptsize{$a^*$}} (C0);
\draw [->, looseness=24, in=52, out=128,loop] (C1) to node[above] {$\scriptstyle{b}$} (C1);
\draw [->, looseness=24, in=-38, out=38,loop] (C1) to node[right] {$\scriptstyle{d}$} (C1);
\draw [->, looseness=24, in=-128, out=-52,loop] (C1) to node[below] {$\scriptstyle{c}$} (C1);
\end{tikzpicture}
\end{aligned}
& \quad \quad
\begin{aligned}
& aa^*=t_0(t_0 +t_1)e_0, & & a^*a=d ( d - t_1), \\ 
& \delta a^*=a^* (t_0 +t_1), & & a d = (t_0 +t_1)a,  \\ 
&b(b-t_3)(b-(t_2+t_3))=0, & & \\
& c(c-t_5)(c-(t_4+t_5))=0, & &\\ 
&b+c + d + t_6e_6=0. \\ 
\end{aligned}
\end{align*}
where
\[
a=a_0a_1, \quad a^*=a_1^* a_0^*, \quad b=a_3^*a_3, \quad c=a_5^*a_5, \quad \text{and} \quad d=a_1^*a_1.
\]
The base change
\[
\beta := b-(2t_3+t_2)/3, \quad
\gamma := c-(2t_5+t_4)/3, \quad
\delta := d -t_1/2 
\]
rewrites the quiver and relations as 
\begin{align*}
\begin{aligned}
\begin{tikzpicture} [bend angle=15, looseness=1]
\node (C0) at (0,0)  {$0$};
\node (C1) at (2,0)  {$6$};
\draw [->,bend left] (C0) to node[above]  {\scriptsize{$a$}} (C1);
\draw [->,bend left] (C1) to node[below]  {\scriptsize{$a^*$}} (C0);
\draw [->, looseness=24, in=52, out=128,loop] (C1) to node[above] {$\scriptstyle{\beta}$} (C1);
\draw [->, looseness=24, in=-38, out=38,loop] (C1) to node[right] {$\scriptstyle{\delta}$} (C1);
\draw [->, looseness=24, in=-128, out=-52,loop] (C1) to node[below] {$\scriptstyle{\gamma}$} (C1);
\end{tikzpicture}
\end{aligned}
\qquad
\begin{aligned}
&aa^*=(t^2 -T_0^{\delta})e_0, & & \\ 
& a^*a=\delta^2 - T_0^{\delta}e_4, & & \\ 
& \delta a^*=a^* t, & & a \delta = t a, \\ 
&\beta^3- T^{\beta}_1 \beta - T^{\beta}_0e_4=0, & &\gamma^3-T^{\gamma}_1 \gamma - T^{\gamma}_0e_4=0, \\ 
&\beta+\gamma + \delta = \frac{t}{3} \, e_4. \\ 
\end{aligned}
\end{align*}
which presents it as an algebra over $\mathbb{H}_3$.
This is the presentation of $\mathcal{A}_3:=(e_C \mathcal{A}_{\Gamma} e_C)\otimes_{\mathbb{H}} \mathbb{H}_3$ required in Theorem \ref{Theorem: Explicit presentations}.

\end{example}

\begin{example}(Length 4) The coloured Dynkin diagram $\Gamma_C$ associated to length 4 is
\[
\begin{tikzpicture}[node distance=1cm, main node/.style={circle,fill=black!100,draw,font=\sffamily\Large\bfseries}]
  \node[main node] (1) at (0,0) {};
  \node[circle, draw] (2) at (0,1) {};
  \node[circle, draw] (3) at (1,0) {};
  \node[circle, draw] (3') at (2,0) {};
    \node[circle, draw] (3'') at (3,0) {};
  \node[circle, draw] (4)  at (-1,0) {};
    \node[circle, draw] (4')  at (-2,0) {};
 \draw [ultra thick] (1) to node {} (2);
 \draw [ultra thick] (1) to node {} (3);
 \draw [ultra thick] (1) to node {} (4);
  \draw [ultra thick] (4) to node {} (4');
   \draw [ultra thick] (3) to node {} (3');
    \draw [ultra thick] (3') to node {} (3'');
\end{tikzpicture}
\]
with associated Cartan polynomial algebra
\[
\mathbb{H}_{\Gamma}:=\frac{\mathbb{C}[t_0,t_1,t_2,t_3,t_4,t_5,t_6,t_7]}{(t_0+2t_1+4t_2+3t_3+2t_4+3t_5+2t_6+ t_7)}.
\]

The subgroup $W_{C} \cong S_3 \times S_2 \times S_4$ is generated by $\{s_1,s_2\} \times \{s_3 \} \times \{s_4,s_5,s_6 \}$. Then $W_{C}$ fixes the element
\[
3t_0+2t_1+t_2
\] and acts by the permutation representation on 
\[
 \{ \tau_1^{\beta}, \tau_2^{\beta} \} \times \{\tau_1^{\gamma}, \tau_2^{\gamma}, \tau_3^{\gamma}, \tau_4^{\gamma} \} \times \{\tau_1^{\delta}, \tau_2^{\delta}, \tau_3^{\delta} \}
\] where
\small
\begin{align*}
\begin{aligned}
\tau_1^{\beta}&=(t_3)/2,  \\
\tau_2^{\beta}&=-(t_3)/2, 
\end{aligned}
\qquad
\begin{aligned}
\tau_1^{\gamma}&=(t_4+2t_5+3t_6)/4,  \\
\tau_2^{\gamma}&=(t_4+2t_5-t_6)/4, \\
\tau_3^{\gamma}&=( t_4-2t_5-t_6)/4, \\
\tau_4^{\gamma}&=-(3t_4+2t_5+t_6)/4, 
\end{aligned}
\qquad
\begin{aligned}
\tau_1^{\delta}&=(t_1+2t_2)/3, \\
\tau_2^{\delta}&=(t_1-t_2)/3, \\
\tau_3^{\delta}&=-(2t_1+t_2)/3.
\end{aligned}
\end{align*}
\normalsize

Let $\sigma_i(x_1, \dots, x_n)$ denote the $i^{th}$ elementary symmetric polynomial in the elements $x_1, \dots, x_n$, and then we choose the following set of generators for the invariant algebra $\mathbb{H}^{W_C}_{\Gamma}$:
\small
\begin{align*}
\begin{aligned}
t & :=(3t_0+2t_1+t_2)/3, \\
T^\beta_0  &:=- \sigma_2(\tau_1^{\beta},\tau_2^{\beta}), 
\end{aligned}
\qquad
\begin{aligned}
T^\gamma_2 &:=-\sigma_2(\tau_1^{\gamma},\tau_2^{\gamma},\tau_3^{\gamma},\tau_4^{\gamma}),\\
T^\gamma_1 &:=-\sigma_3(\tau_1^{\gamma},\tau_2^{\gamma},\tau_3^{\gamma},\tau_4^{\gamma}), \\
T^\gamma_0 &:= -\sigma_4(\tau_1^{\gamma},\tau_2^{\gamma},\tau_3^{\gamma},\tau_4^{\gamma}),
\end{aligned}
\qquad
\begin{aligned}
T_0^{\delta} &:= -\sigma_3(\tau_1^{\delta},\tau_2^{\delta},\tau_3^{\delta}),\\ 
T_1^{\delta} &:= -\sigma_2(\tau_1^{\delta},\tau_2^{\delta},\tau_3^{\delta}).\\ 
\end{aligned}
\end{align*}
\normalsize

Hence 
\[
\mathbb{H}_4 \cong \mathbb{C}[t,T_0^{\delta},T_1^{\delta},T^\beta_0,T^\gamma_2,T^\gamma_1,T^\gamma_0].
\]

Take $e_C=e_0+e_7$, then the algebra $e_C \mathcal{A}_{\Gamma} e_C$ can be presented as the $\mathbb{H}_{\Gamma}$ path algebra with relations
\begin{align*}
\begin{aligned}
\begin{tikzpicture} [bend angle=15, looseness=1]
\node (C0) at (0,0)  {$0$};
\node (C1) at (2,0)  {$7$};
\draw [->,bend left] (C0) to node[above]  {\scriptsize{$a$}} (C1);
\draw [->,bend left] (C1) to node[below]  {\scriptsize{$a^*$}} (C0);
\draw [->, looseness=24, in=52, out=128,loop] (C1) to node[above] {$\scriptstyle{b}$} (C1);
\draw [->, looseness=24, in=-38, out=38,loop] (C1) to node[right] {$\scriptstyle{d}$} (C1);
\draw [->, looseness=24, in=-128, out=-52,loop] (C1) to node[below] {$\scriptstyle{c}$} (C1);
\end{tikzpicture}
\end{aligned}
& \quad \quad
\begin{aligned}
& aa^*=t_0(t_0 + t_1)(t_0+(t_1+t_2)), \\
& a^*a=d(d-t_2)(d-(t_1+t_2)),  \\
& d a^*=a^*( t_0 +(t_1+t_2)),  \qquad a d = (t_0 + (t_1+t_2)),  \\ 
&b(b-t_3)=0, \\
& c(c-t_6)(c-(t_6+t_5))(c-(t_6+t_5+t_4))=0,\\ 
&b+c + d + t_7e_7=0. 
\end{aligned}
\end{align*}
where
\[
a=a_0a_1a_2, \quad a^*=a_2^*a_1^* a_0^*, \quad b=a_3^*a_3, \quad c=a_6^*a_6, \quad \text{and} \quad d=a_2^*a_2.
\]
The base change 
\[
\delta := d -(t_1+2t_2)/3, \quad
\beta := b-(t_3)/2, \quad \text{ and }
\gamma := c-(t_4+2t_5+3t_6)/4.
\]
rewrites the quiver and relations as
\begin{align*}
\begin{aligned}
\begin{tikzpicture} [bend angle=15, looseness=1]
\node (C0) at (0,0)  {$0$};
\node (C1) at (2,0)  {$7$};
\draw [->,bend left] (C0) to node[above]  {\scriptsize{$a$}} (C1);
\draw [->,bend left] (C1) to node[below]  {\scriptsize{$a^*$}} (C0);
\draw [->, looseness=24, in=52, out=128,loop] (C1) to node[above] {$\scriptstyle{\beta}$} (C1);
\draw [->, looseness=24, in=-38, out=38,loop] (C1) to node[right] {$\scriptstyle{\delta}$} (C1);
\draw [->, looseness=24, in=-128, out=-52,loop] (C1) to node[below] {$\scriptstyle{\gamma}$} (C1);
\end{tikzpicture}
\end{aligned}
\qquad
\begin{aligned}
& \delta a^*=a^* t, & & a \delta = ta, \\ 
&aa^*=(t^3-T^{\delta}_1 t - T^{\delta}_0) e_0, & &  a^*a=\delta^3- T^{\delta}_1 \delta  - T^{\delta}_0 e_7, \\ 
&\beta^2=T^\beta_0, & &\gamma^4=T^{\gamma}_2 \gamma^2 + T^{\gamma}_1 \gamma + T^{\gamma}_0, \\ 
&\beta+\gamma +\delta = \frac{t}{4} \, e_7. \\ 
\end{aligned}
\end{align*}
This is the required presentation of $\mathcal{A}_4:=(e_C \mathcal{A}_{\Gamma} e_C)\otimes_{\mathbb{H}_{\Gamma}} \mathbb{H}_4$ as a path algebra with relations over $\mathbb{H}_4$ from Theorem \ref{Theorem: Explicit presentations}.
\end{example}

\begin{example}(Length 5) The coloured Dynkin diagram $\Gamma_C$ associated to length 5 is
\[
\begin{tikzpicture}[node distance=1cm, main node/.style={circle,fill=black!100,draw,font=\sffamily\Large\bfseries}]
 \node (Label) at (-5,0) {$\Gamma_C:=$};
  \node[circle, draw] (1) at (0,0) {};
  \node[circle, draw] (2) at (0,1) {};
  \node[main node] (3) at (-1,0) {};
  \node[circle, draw] (3') at (-2,0) {};
    \node[circle, draw] (3'') at (-3,0) {};
        \node[circle, draw] (3''') at (-4,0) {};
  \node[circle, draw] (4)  at (1,0) {};
    \node[circle, draw] (4')  at (2,0) {};
 \draw [ultra thick] (1) to node {} (2);
 \draw [ultra thick] (1) to node {} (3);
 \draw [ultra thick] (1) to node {} (4);
  \draw [ultra thick] (4) to node {} (4');
   \draw [ultra thick] (3) to node {} (3');
    \draw [ultra thick] (3') to node {} (3'');
        \draw [ultra thick] (3'') to node {} (3''');
\end{tikzpicture}
\]
with associated Cartan polynomial algebra
\[
\mathbb{H}_{\Gamma}:=\frac{\mathbb{C}[t_0,t_1,t_2,t_3,t_4,t_5,t_6,t_7,t_8]}{(t_0+2t_1+3t_2+4t_3+5t_4+3t_5+2t_6+4t_7+6t_8)}.
\]

The subgroup $W_{C} \cong S_4 \times S_5$ is generated by the simple reflections $\{s_1, s_2,s_3 \} \times \{s_5,s_6,s_7,s_8\}$. In particular the action fixes the element $(4t_0+3t_1+2t_2+t_3)$ and acts by its permutation representation on
\[
\{ \tau_1^{\delta}, \tau_2^{\delta}, \tau_3^{\delta}, \tau_4^{\delta} \} \times \{ \tau_1, \tau_2, \tau_3, \tau_4, \tau_5 \}
\]
where
\small
\begin{align*}
\begin{aligned}
\tau_1^{\delta}&:=(t_1+2t_2+3t_3)/4, \\ 
\tau_2^{\delta}&:= (t_1+2t_2-t_3)/4, \\ 
\tau_3^{\delta}&:=(t_1-2t_2-t_3)/4, \\ 
\tau_4^{\delta}&:= -(3t_1+2t_2+t_3)/4,
\end{aligned}
\qquad
\begin{aligned}
\tau_1 &:= (t_5+2t_8+3t_7+4t_6)/5, \\ 
\tau_2 &:= (t_5+2t_8+3t_7-t_6)/5, \\ 
\tau_3 &:= (t_5+2t_8-2t_7-t_6)/5, \\ 
\tau_4 &:= (t_5-3t_8-2t_7-t_6)/5, \\ 
\tau_5 &:= (-4t_5-3t_8-2t_7-t_6)/5.
\end{aligned}
\normalsize
\end{align*}
Hence the following elementary symmetric polynomials are a generating set for the invariant ring
\small
\begin{align*}
\begin{aligned}
&t:= (4t_0+3t_1+2t_2+t_3)/4, \\
&T^{\delta}_0:= -\sigma_4(\tau_1^{\delta},\tau_2^{\delta},\tau_3^{\delta},\tau_4^{\delta}), \\
&T^{\delta}_1:=  -\sigma_3(\tau_1^{\delta},\tau_2^{\delta},\tau_3^{\delta},\tau_4^{\delta}), \\
&T^{\delta}_2:= - \sigma_2(\tau_1^{\delta},\tau_2^{\delta},\tau_3^{\delta},\tau_4^{\delta}),
\end{aligned}
\qquad
\begin{aligned}
&T_3:=- \sigma_2(\tau_1, \tau_2,\tau_3,\tau_4,\tau_5), \\
&T_2:=- \sigma_3(\tau_1, \tau_2,\tau_3,\tau_4,\tau_5), \\
&T_1:=-\sigma_4(\tau_1, \tau_2,\tau_3,\tau_4,\tau_5),  \\
&T_0:= - \sigma_5(\tau_1, \tau_2,\tau_3,\tau_4,\tau_5).
\end{aligned}
\end{align*}
\normalsize

In order to calculate the universal flopping algebra of length 5 we first consider the intermediate case corresponding to the diagram
\[
\begin{tikzpicture}[node distance=1cm, main node/.style={circle,fill=black!100,draw,font=\sffamily\Large\bfseries}]
 \node (Label) at (-5,0) {$\Gamma_{C'}:=$};
  \node[main node] (1) at (0,0) {};
  \node[circle,draw] (2) at (0,1) {};
  \node[main node] (3) at (-1,0) {};
  \node[circle,draw] (3') at (-2,0) {};
    \node[circle,draw] (3'') at (-3,0) {};
        \node[circle,draw] (3''') at (-4,0) {};
  \node[circle,draw] (4)  at (1,0) {};
    \node[circle,draw] (4')  at (2,0) {};
 \draw [ultra thick] (1) to node {} (2);
 \draw [ultra thick] (1) to node {} (3);
 \draw [ultra thick] (1) to node {} (4);
  \draw [ultra thick] (4) to node {} (4');
   \draw [ultra thick] (3) to node {} (3');
    \draw [ultra thick] (3') to node {} (3'');
        \draw [ultra thick] (3'') to node {} (3''');
\end{tikzpicture}
\]
and the subgroup $S_4 \times S_3 \times S_2  < S_4 \times S_5 $. The subgroup $S_3 \times S_2 < S_5$ is generated by the simple reflections $\{s_6,s_7\} \times  \{ s_5 \} $. Then this acts by it's natural permutation representation on the set $\{ \tau_1, \tau_2, \tau_3 \} \times \{ \tau_4, \tau_5 \}$.  

The elements 
\[
t:=(4t_0+3t_1+2t_2+t_3)/5  \text{  and  }  R_1:=(3t_5+6t_8+4t_7+2t_6)/5=-(t_0+2t_1+3t_2+4t_3+5t_4)/5
\]
are invariant under the $S_2 \times S_3$ action, as are the elementary symmetric polynomials
\small
\begin{align*}
\begin{aligned}
R_1&:= \tau_1+\tau_2+\tau_3, \\
R_2&:= \tau_1 \tau_2+\tau_2 \tau_3+ \tau_3 \tau_1, \\
R_3&:= \tau_1 \tau_2 \tau_3, 
\end{aligned}
\qquad
\begin{aligned}
S_1&:= \tau_4+\tau_5, \\
S_2&:= \tau_4 \tau_5,
\end{aligned}
\end{align*}
\normalsize
The elementary symmetric polynomials for the $S_5$ action can be realised as
\small
\begin{align*}
T_3&=R_2+S_2+R_1S_1 =R_2+S_2-R_1^2\\
T_2&=R_3+R_2S_1+R_1S_2 =R_3-R_1R_2+R_1S_2\\
T_1&=-R_3R_1+R_2S_2 \\
T_0&=R_3S_2.
\end{align*}
\normalsize

Let $e_{C'}:=e_0+e_4+e_8$. Then the algebra $e_{C'} \mathcal{A}_{\Gamma} e_{C'}$ can be presented as the path algebra over $\mathbb{H}_{\Gamma}$ of a quiver with relations as follows:
\begin{align*}
\begin{aligned}
\begin{tikzpicture} [bend angle=15, looseness=1]
\node (C0) at (0,0)  {$0$};
\node (C1) at (2,0)  {$4$};
\node (C2) at (3.5,0)  {$8$};
\draw [->,bend left] (C0) to node[above]  {\scriptsize{$a$}} (C1);
\draw [->,bend left] (C1) to node[below]  {\scriptsize{$a^*$}} (C0);
\draw [->,bend left] (C1) to node[above]  {\scriptsize{$a_4$}} (C2);
\draw [->,bend left] (C2) to node[below]  {\scriptsize{$a_4^*$}} (C1);
\draw [->, looseness=24, in=52, out=128,loop] (C2) to node[above] {$\scriptstyle{b}$} (C2);
\draw [->, looseness=24, in=-38, out=38,loop] (C2) to node[right] {$\scriptstyle{c}$} (C2);
\draw [->, looseness=24, in=-128, out=-52,loop] (C1) to node[below] {$\scriptstyle{d}$} (C1);
\end{tikzpicture}
\end{aligned}
& \quad \quad
\begin{aligned}
&aa^*=t_0(t_0+t_1)(t_0+t_1+t_2)(t_0+t_1+t_2+t_3)e_0 \\
&a^*a=d(d-t_3)(d-t_3-t_2)(d-t_3-t_2-t_1) \\
&t_0 a = a(d - t_1-t_2-t_3) \\
&da^* = a^*(t_0+t_1+t_2+t_3) \\
&a_4a_4^*-d=t_4e_4 \\
&b(b-t_5)=0 \\
&c(c-t_7)(c-t_6-t_7) = 0 \\
&a_4^*a_4 + b + c + t_8e_8 =0 
\end{aligned}
\end{align*}
where
\[
a:=a_0a_1a_2a_3, \quad a^*:=a_3^*a_2^*a_1^*a_0^*, \quad d:=a_3^*a_3, \quad b:=a_5^* a_5, \text{ and }  c:=a_7^*a_7.
\]
The base change 
\[
\delta = d- \frac{t_1+2t_2+3t_3}{4}, \quad
B = b + \tau_3, \quad \text{ and }
C = c + \tau_5
\]
rewrites the relations as
\begin{align*}
\begin{aligned}
&t a = a \delta,
 \qquad \delta  a^* = a^* t, \\
&aa^*=(t^4 - T^{\delta}_2t^2 - T_1^{\delta} t -T_0^{\delta})e_0, \qquad a^*a=\delta ^4 - T^{\delta}_2\delta ^2 - T_1^{\delta} \delta  -T_0^{\delta}e_4,  \\ 
&a_4a_4^*-\delta =\frac{t_1+2t_2+3t_3+4t_4}{4}e_4=-R_1-\frac{t}{5}e_4, \\
&a_4^*a_4 +B + C +R_1= 0,  \qquad B^2+R_1B+S_2=0, \\
&C^3-R_1C^2+R_2C-R_3 = 0.
\end{aligned}
\end{align*}
Let $e_C=e_0+e_4 \in e_{C'} \mathcal{A}_{\Gamma} e_{C'} \subset \mathcal{A}_\Gamma$, and then $e_C \mathcal{A}_{\Gamma} e_C$ can be presented as the path algebra over $\mathbb{H}_\Gamma$ of the following  quiver with relations.

\begin{align*}
\begin{aligned}
\begin{tikzpicture} [bend angle=15, looseness=1]
\node (C0) at (0,0)  {$0$};
\node (C1) at (2,0)  {$4$};
\draw [->,bend left] (C0) to node[above]  {\scriptsize{$a$}} (C1);
\draw [->,bend left] (C1) to node[below]  {\scriptsize{$a^*$}} (C0);
\draw [->, looseness=24, in=52, out=128,loop] (C1) to node[above] {$\scriptstyle{\delta}$} (C1);
\draw [->, looseness=24, in=-38, out=38,loop] (C1) to node[right] {$\scriptstyle{\gamma}$} (C1);
\draw [->, looseness=24, in=-128, out=-52,loop] (C1) to node[below] {$\scriptstyle{\beta}$} (C1);
\end{tikzpicture}
\end{aligned}
& \quad \quad
\begin{aligned}
&aa^*=(t^4 - T^{\delta}_2t^2 - T_1^{\delta} t -T_0^{\delta})e_0, \\
&a^*a=\delta^4 - T^{\delta}_2\delta^2 - T_1^{\delta} \delta -T_0^{\delta}e_4, \\
&ta = a\delta, \\
&\delta a^* = a^*t, \\
&\beta-\delta+\frac{t}{5}e_4=0 ,
\end{aligned}
\end{align*}
along with the two relations 
\[
\gamma \beta \gamma +\gamma^2 \beta +\gamma \beta^3=(R_1^2 - R_2 - S_2)\gamma \beta + (R_1R_2 - R_1S_2 - R_3) \gamma - R_3S_2e_4
\]
and 
\[
(\gamma+ \beta^2)^2+\beta \gamma \beta = (R_1^2 - R_2 - S_2)(\gamma+\beta^2)  + (R_1R_2 - R_1S_2 - R_3)\beta + (R_1R_3 -
    R_2S_2)e_4.
\]
where
\[
\beta := a_4a_4^*+R_1e_4 \quad \, \text{ and } \, \quad \gamma:=a_4 B a_4^*-S_2e_4.
\]
Then this is a presentation of the algebra $\mathcal{A}_5$ as a path algebra over $\mathbb{H}_5$ for the quiver and relations claimed in Theorem \ref{Theorem: Explicit presentations}.
\begin{align*}
\begin{aligned}
\begin{tikzpicture} [bend angle=15, looseness=1]
\node (C0) at (0,0)  {$0$};
\node (C1) at (2,0)  {$4$};
\draw [->,bend left] (C0) to node[above]  {\scriptsize{$a$}} (C1);
\draw [->,bend left] (C1) to node[below]  {\scriptsize{$a^*$}} (C0);
\draw [->, looseness=24, in=52, out=128,loop] (C1) to node[above] {$\scriptstyle{\delta}$} (C1);
\draw [->, looseness=24, in=-38, out=38,loop] (C1) to node[right] {$\scriptstyle{\gamma}$} (C1);
\draw [->, looseness=24, in=-128, out=-52,loop] (C1) to node[below] {$\scriptstyle{\beta}$} (C1);
\end{tikzpicture}
\end{aligned}
& \quad \quad
\begin{aligned}
& a\delta=ta, \qquad \delta a^* = a^*t, \\
&aa^*=(t^4 - T^{\delta}_2t^2 - T_1^{\delta} t -T_0^{\delta})e_0, \\
&a^*a=\delta^4 - T^{\delta}_2\delta^2 - T_1^{\delta} \delta -T_0^{\delta}e_4, \\
&\beta-\delta+\frac{t}{5}e_4=0, \\
&\gamma \beta \gamma +\gamma^2 \beta +\gamma \beta^3=-T_3\gamma \beta -T_2 \gamma -T_0e_4 \\
&
(\gamma+ \beta^2)^2+\beta \gamma \beta = -T_3(\gamma+\beta^2)  -T_2\beta -T_1e_4.
\end{aligned}
\end{align*}

\end{example}

\begin{example}(Length 6) The coloured Dynkin diagram $\Gamma_C$ associated to length 6 is
\[
\begin{tikzpicture}[node distance=1cm, main node/.style={circle,fill=black!100,draw,font=\sffamily\Large\bfseries}]
  \node[main node] (1) at (0,0) {};
  \node[circle, draw] (2) at (0,1) {};
  \node[circle, draw] (3) at (-1,0) {};
  \node[circle, draw] (3') at (-2,0) {};
    \node[circle, draw] (3'') at (-3,0) {};
        \node[circle, draw] (3''') at (-4,0) {};
  \node[circle, draw] (4)  at (1,0) {};
    \node[circle, draw] (4')  at (2,0) {};
 \draw [ultra thick] (1) to node {} (2);
 \draw [ultra thick] (1) to node {} (3);
 \draw [ultra thick] (1) to node {} (4);
  \draw [ultra thick] (4) to node {} (4');
   \draw [ultra thick] (3) to node {} (3');
    \draw [ultra thick] (3') to node {} (3'');
        \draw [ultra thick] (3'') to node {} (3''');
\end{tikzpicture}
\]
with associated Cartan polynomial algebra
\[
\mathbb{H}_{\Gamma}:=\frac{\mathbb{C}[t_0,t_1,t_2,t_3,t_4,t_5,t_6,t_7,t_8]}{(t_0+2t_1+3t_2+4t_3+5t_4+3t_5+2t_6+4t_7+6t_8)}.
\]

The subgroup $W_C \cong S_5 \times S_2 \times S_3$ is generated by the elements $\{ s_1,s_2,s_3,s_4 \} \times \{ s_5 \} \times \{ s_6, s_7 \}$. This subgroup fixes the element
\[
(5t_0+4t_1+3t_2+2t_3+t_4)
\]
and acts by the permutation representation on
\[
\{\tau^{\delta}_1,\tau^{\delta}_2,\tau^{\delta}_3,\tau^{\delta}_4,\tau^{\delta}_5 \} \times \{\tau_1^{\beta}, \tau_2^{\beta}\} \times \{\tau^{\gamma}_1,\tau^{\gamma}_2,\tau^{\gamma}_3 \}
\]
where
\small
\begin{align*}
\begin{aligned}
\tau^{\delta}_1&:=(t_1+2t_2+3t_3+4t_4)/5, \\
\tau^{\delta}_2&:=(t_1+2t_2+3t_3-t_4)/5, \\
\tau^{\delta}_3&:=(t_1+2t_2-2t_3-t_4)/5, \\
\tau^{\delta}_4&:=(t_1-3t_2-2t_3-t_4)/5, \\
\tau^{\delta}_5 &:=-(4t_1+3t_2+2t_3+t_4)/5,
\end{aligned}
\qquad
\begin{aligned}
\tau^{\beta}_1&:=(t_5)/2, \\
\tau^{\beta}_2&:=-(t_5)/2,
\end{aligned}
\qquad 
\begin{aligned}
\tau^{\gamma}_1&:=(t_6+2t_7)/3, \\
\tau^{\gamma}_2&:=(t_6-t_7)/3, \\
\tau^{\gamma}_3&:=-(2t_6+t_7)/3.
\end{aligned}
\end{align*}
\normalsize
Hence the following elementary symmetric polynomials generate the invariant ring
\small
\begin{align*}
\begin{aligned}
t & :=(5t_0+4t_1+3t_2+2t_3+t_4)/5, \\
T^{\delta}_0 & := -\sigma_5(\tau^{\delta}_1, 
\tau^{\delta}_2, 
\tau^{\delta}_3, 
\tau^{\delta}_4, 
\tau^{\delta}_5 ), \\
T^{\delta}_1&:=-\sigma_4(\tau^{\delta}_1, 
\tau^{\delta}_2, 
\tau^{\delta}_3, 
\tau^{\delta}_4, 
\tau^{\delta}_5 ), \\
T^{\delta}_2&:=-\sigma_3(\tau^{\delta}_1, 
\tau^{\delta}_2, 
\tau^{\delta}_3, 
\tau^{\delta}_4, 
\tau^{\delta}_5 ), 
\end{aligned}
\qquad
\begin{aligned}
T^{\delta}_3&:= -\sigma_2(\tau^{\delta}_1, 
\tau^{\delta}_2, 
\tau^{\delta}_3, 
\tau^{\delta}_4, 
\tau^{\delta}_5 ),\\
T^\gamma_1 &:=-\sigma_3(\tau^{\gamma}_1,
\tau^{\gamma}_2,
\tau^{\gamma}_3), \\
T^\gamma_0 &:= -\sigma_2(\tau^{\gamma}_1,
\tau^{\gamma}_2,
\tau^{\gamma}_3)/27, \\
T^{\beta}_0&:=- \sigma_2(\tau^{\beta}_1,
\tau^{\beta}_2)/4,
\end{aligned}
\end{align*}
\normalsize
and
\[
\mathbb{H}_6:=\mathbb{C}[t,T_{\delta}^0,T_{\delta}^1,T_{\delta}^2,T_{\delta}^3,T_{\gamma}^0,T_{\gamma}^1,T_{\beta}^0].
\]

Let $e_C:=e_0+e_8$. Then the algebra $e_C \mathcal{A}_\Gamma e_C$ can be presented as 
\begin{align*} 
\begin{aligned}
\begin{tikzpicture} [bend angle=15, looseness=1]
\node (C0) at (0,0)  {$0$};
\node (C1) at (2,0)  {$8$};
\draw [->,bend left] (C0) to node[above]  {\scriptsize{$a$}} (C1);
\draw [->,bend left] (C1) to node[below]  {\scriptsize{$a^*$}} (C0);
\draw [->, looseness=24, in=52, out=128,loop] (C1) to node[above] {$\scriptstyle{b}$} (C1);
\draw [->, looseness=24, in=-38, out=38,loop] (C1) to node[right] {$\scriptstyle{d}$} (C1);
\draw [->, looseness=24, in=-128, out=-52,loop] (C1) to node[below] {$\scriptstyle{c}$} (C1);
\end{tikzpicture}
\end{aligned}
 \quad \quad
\begin{aligned}
& d a^*=a^* (t_0 (+t_1+t_2+t_3+t_4 )),  \\
& a  d = (t_0+(t_1+t_2+t_3+t_4)) a, \\ 
&b(b-t_5)=0, & &  \\
&c(c-t_7)(c-(t_7+t_6))=0, &&\\ 
&b+c+ d + t_8e_5=0, 
\end{aligned}
\\
\begin{aligned}
& aa^*=t_0(t_0 + t_1)(t_0+(t_1+t_2))(t_0+(t_1+t_2+t_3))(t_0+(t_1+t_2+t_3+t_4))e_0, & &\\ 
& a^*a=d(d-t_4)(d-(t_3+t_4))(d-(t_2+t_3+t_4))(d-(t_1+t_2+t_3+t_4)), && \\ 
\end{aligned}
\end{align*}
where $a=a_0a_1a_2a_3a_4,\, a^*=a_4^*a_3^*a_2^*a_1^*a_0^*, \, b=a_5^*a_5,c=a_7^*a_7$ and $d=a_4^*a_4$. Then the change of basis
\[
\delta := d -(t_1+2t_2+3t_3+4t_4)/5, \quad
\beta := b-(t_5)/2, \quad \text{ and } 
\gamma := c-(2t_7+t_6)/3 
\]
rewrites the quiver and relations as 
\begin{align*}
\begin{aligned}
\begin{tikzpicture} [bend angle=15, looseness=1]
\node (C0) at (0,0)  {$0$};
\node (C1) at (2,0)  {$8$};
\draw [->,bend left] (C0) to node[above]  {\scriptsize{$a$}} (C1);
\draw [->,bend left] (C1) to node[below]  {\scriptsize{$a^*$}} (C0);
\draw [->, looseness=24, in=52, out=128,loop] (C1) to node[above] {$\scriptstyle{\beta}$} (C1);
\draw [->, looseness=24, in=-38, out=38,loop] (C1) to node[right] {$\scriptstyle{\delta}$} (C1);
\draw [->, looseness=24, in=-128, out=-52,loop] (C1) to node[below] {$\scriptstyle{\gamma}$} (C1);
\end{tikzpicture}
\end{aligned}
& \quad \quad
\begin{aligned}
& \delta a^*=a^* t, \qquad a \delta = t a, \\ 
&aa^*=(t^5-T_d^3t^3-T_d^2t^2-t T_d^1 -T_d^0)e_0,  \\ 
& a^*a=\delta^5 -T_d^3\delta^3-T_d^2\delta^2-\delta T_d^1 -T_d^0 \\ 
&\beta^2=T_{\beta}^0, \qquad \gamma^3=T_{\gamma}^1 \gamma + T_{\gamma}^0, \\ 
&\beta+\gamma + \delta = \frac{t}{6} \, e_8 \\ 
\end{aligned}
\end{align*}
which is the presentation of $\mathcal{A}_6$ as a path algebra with relations over $\mathbb{H}_6$ required by Theorem \ref{Theorem: Explicit presentations}.
\end{example}

\section{Examples and applications} \label{Sec: Examples} In the following section we show how the presentations of the universal flopping algebras can be used to construct explicit examples relating to simple threefold flops. In Example \ref{Example: Length 2} we reproduce the matrix factorisation description of the universal flop of length $2$ calculated by Curto and Morrison, and we also recover the presentation of Laufer flop of length two as a quiver with superpotential previously calculated by Aspinwall and Morrison \cite{AspinwallMorrison} in Example \ref{Ex:Laufer flop}. After this we calculate particular examples of NCCRs associated to simple threefold flops of length $3, \, 4, \, 5$ and $6$ presented as quivers with superpotentials in Examples \ref{Example:Length 3,4, and 6} and \ref{Example: Length 5 explicit}.

\begin{example}(Length $2$) \label{Example: Length 2}
Using the presentation of the length 2 universal flopping algebra $\mathcal{A}_2$ in Example \ref{Example:length 2} we calculate the algebra $\mathcal{R}_2$ and the $\mathcal{R}_2$-modules $\mathcal{N}_2$ and $\mathcal{N}_2^{+}$. As such we explicitly recover Curto and Morrison's universal flop of length $2$ from the universal flopping algebra $\mathcal{A}_2$. We recall $\mathcal{A}_2:=\mathbb{H}_2 Q_2/I_2$ for the quiver and relations
\begin{align*}
\begin{aligned}
\begin{tikzpicture} [bend angle=15, looseness=1]
\node (C0) at (0,0)  {$0$};
\node (C1) at (2,0)  {$4$};
\draw [->,bend left] (C0) to node[above]  {\scriptsize{$a$}} (C1);
\draw [->,bend left] (C1) to node[below]  {\scriptsize{$a^*$}} (C0);
\draw [->, looseness=24, in=52, out=128,loop] (C1) to node[above] {$\scriptstyle{\beta}$} (C1);
\draw [->, looseness=24, in=-38, out=38,loop] (C1) to node[right] {$\scriptstyle{\gamma}$} (C1);
\draw [->, looseness=24, in=-128, out=-52,loop] (C1) to node[below] {$\scriptstyle{\delta}$} (C1);
\end{tikzpicture}
\end{aligned}
& \quad \quad
\begin{aligned}
&aa^*=te_0, & & \beta^2=T^\beta_0 e_4, \\ 
&\gamma^2=T^\gamma_0 e_4, & & \delta^2=T^\delta_0 e_4, \\ 
&a^*a+\beta+\gamma + \delta = \frac{t}{2} \, e_4 \\ 
\end{aligned}
\end{align*}
where $ \mathbb{H}_2:= \mathbb{C}[t,T^\beta_0,T^\gamma_0,T^\delta_0]$.

We recall that $\mathcal{R}_2=e_0 \mathcal{A}_2 e_0$ is the commutative algebra generated by the loops at vertex $0$, and it is flat as an $\mathbb{H}_2$-algebra with fibre the Kleinian singularity $R_{\Gamma}$ above the origin. This implies that $\mathcal{R}_2$ has the same Hilbert polynomial as $\mathbb{H}_2 \otimes_{\mathbb{C}} R_{\Gamma}$, where we recall the grading on $R_{\Gamma}$ from Remark \ref{Remark:Group Quotient}. In particular, it can be presented as a polynomial ring with 7 generators satisfying a single equation. Explicitly
\[
x'=a\beta \gamma a^* , \, \, y= a \gamma a^*, \, \, \text{ and } z=a \beta a^*
\]
are a generating set for $\mathcal{R}_2$ as a $\mathbb{H}_2$ algebra, and hence  \[
x',\, y,\, z, \, t_a:=2te_0, \, t_b:=T^{\beta}_0e_0, \, t_c:=T^{\gamma}_0 e_0, \, t_d:= T^{\delta}_0 e_0
\] are a set of generators for $\mathcal{R}_2:=e_0 \mathcal{A}_2 e_0$ as a $\mathbb{C}$-algebra in degree $4,4,6,2,4,4,4$ respectively.

It can be verified that these generators satisfy the following relation in degree 12
\[
(x')^2+2t_a(y+z-t_d+t_c+t_b+t_a^2)x' = (y+z-t_d+t_c+t_b+t_a^2)yz- 4t_a^2t_bt_c  + t_by^2 + t_cz^2
\]
either by hand or by using any appropriate symbolic algebra calculator; the difficulty is in finding the equation not verifying it. See Appendix \ref{Appendix:Algebra} for more details on how the hypersurface equation can be found.

The following change of basis
\begin{align} \label{Equation:Change of Basis}
x &:= x' + t_a (z + y + t_b + t_c - t_d + t_a^2) & u &:= -t_b & t &:= 2t_a \\
v &:= -(z + y + t_b + t_c - t_d + t_a^2)/2 & w &:= -t_c  &  \nonumber
\end{align}
rewrites the degree 12 equation above in the simpler form
\[
f:=- x^2 - u y^2 - 2v y z - w z^2-( u w -v^2  )t^2.
\]
Then $\mathcal{R}_2=\mathbb{C}[s,y,z,u,w,v,t]/(f)$, and this is equal to the equation found by Morrison and Curto (presented in the introduction as Equation \ref{Equation:Morrison+Curto}).

The MCM $\mathcal{R}_2$-modules $\mathcal{N}_2:=e_0 \mathcal{A}_2 e_4$ and $\mathcal{N}_2^{+}:=e_4\mathcal{A}_2 e_0$ can be explicitly recovered from $\mathcal{A}_2$ in a similar way.  These $\mathcal{R}_2$-modules correspond to paths in $\mathcal{A}_2$ for vertex 0 to vertex 4 or from vertex 4 to vertex 0 respectively. They are finitely generated as $\mathcal{R}_2$-modules, and it is clear to see that $a,a \beta,a \gamma, a\beta \gamma$ are a generating set for $e_0 \mathcal{A}_2(1-e_0)$ as a (left) $\mathcal{R}_2$-module and $a^*,\beta a^*,\gamma a^*,\beta \gamma a^*$ are a generating set for $(1-e_0)\mathcal{A}_2 e_0$ as a (right) $\mathcal{R}_2$-module. By calculating the action of the generators of $\mathcal{R}_2$ on the generators of $\mathcal{N}_2$ and $\mathcal{N}_2^{+}$ these modules can be presented as cokernels of maps between free $\mathcal{R}_2$-modules, see Appendix \ref{Appendix:MF} for how such presentations can be calculated explicitly.

In particular, after the same change of basis as above (\ref{Equation:Change of Basis}), we can present $\mathcal{N}_2$ as the cokernel of the matrix
\[
\Psi_2:=\begin{pmatrix}
  -x-tv & y& -z& t \\
  -uy - 2vz& -x+tv& tu& z \\
  wz& -tw& -x-tv& y \\
  -tuw& -wz& -uy - 2vz&-x+ tv
\end{pmatrix},
\]
which, up to a change in minus sign for the third element of the basis, is half of the matrix factorisation found by Morrison and Curto and listed above as equation \ref{Equation:MatrixFactorisation}. Similarly $\mathcal{N}_2^+$ can be presented as the cokernel of the matrix
\[
\Psi_2^+:=\begin{pmatrix}
-x+tv&-y  & z &-t\\
2vz+uy&-x-tv& -tu& -z \\
-wz & tw & -x +tv & -y \\
tuw &  wz & 2vz + uy & -x-tv
\end{pmatrix}.
\]
Together these give a matrix factorisation of the equation 
\[
-f=x^2 + u y^2 + 2v y z + w z^2+( u w -v^2  )t^2.
\]
Further, in the language of Aspinwall and Morrison \cite[Section 4]{AspinwallMorrison} we can write the elements $a,\, a^*,\,\beta, \, \gamma$ and $\delta$ as $\mathcal{R}$-linear maps between the cokernels defined above:
\begin{align*}
&a=
\begin{pmatrix}
1 & 0 & 0 & 0
\end{pmatrix}, \,
a^*=
\begin{pmatrix}
t \\ z \\ y \\ x+tv
\end{pmatrix}, \, 
\beta=\begin{pmatrix}
0 & 1 & 0 & 0 \\
-u & 0 & 0  & 0 \\
2v & 0 & 0 & -1 \\
0 & 2v & u & 0
\end{pmatrix}, \,
\\
&\gamma=\begin{pmatrix}
0 & 0 & 1 & 0 \\
0 & 0 & 0 & 1 \\
-w & 0 & 0 & 0 \\
0 &-w & 0 & 0
\end{pmatrix}, \, \text{ and }
\delta=
 \begin{pmatrix}
-t/2 & -1& -1 & 0 \\
u-z & t/2 & 0  & -1 \\
w-2v-y& 0 & t/2 & 1 \\
0 & w-2v-y & -u+z & -t/2
\end{pmatrix}.
\end{align*}
These can be calculated in a similar manner to the cokernel presentation producing the matrix factorisation by finding the action of an element $\alpha \in \mathcal{A}_l$ on the generating set for $\mathcal{R}_2 \oplus \mathcal{N}_2$ as an $\mathcal{R}_2$ module. We note that the cokernel is defined up to addition by any row of $\Psi_2$, and so these matrices are only determined up to such alterations. For example, any row of these linear maps can be altered in this way and by looking at the top row of $\Psi_2$ we can write
\[
(x+tv,0,0,0)=(0,y,-z,t)
\]
and hence the following elements are in the same equivalence class
\[
a^*a= \begin{pmatrix} 
t & 0 & 0 & 0 \\ 
z & 0 & 0 & 0 \\
y & 0 & 0 & 0 \\
x+tv & 0 & 0 & 0
\end{pmatrix}
\cong 
\begin{pmatrix} 
t & 0 & 0 & 0 \\ 
z & 0 & 0 & 0 \\
y & 0 & 0 & 0 \\
0 & y & -z & t
\end{pmatrix}.
\]
Noting this, it is straightforward to verify that the relations on the quiver are satisfied. It is also easy to present the endomorphisms $xe_4,  y e_4, z e_4, t e_4, u e_4, v e_4, w e_4$ of $\mathcal{N}_2$ in terms of the maps $\beta, \gamma, \delta, a, a^*$:
\begin{align*}
&u e_4 \cong -\beta^2, \quad w e_4 \cong -\gamma^2,  \quad t e_4 \cong 2(\beta+\gamma+\delta + a^*a), \\
&z e_4 \cong  a^* a \beta + \beta a^* a - t \beta, \quad
y e_4 \cong a^* a \gamma + \gamma a^* a  - t \gamma,
\\
&v e_4 \cong  -\frac{1}{2}(t^2/4+y+z)e_4-\frac{1}{2}(\beta^2+\gamma^2-\delta^2), \\
&xe_4 \cong \beta \gamma a^*a + a^* a \beta \gamma - t \beta \gamma- 2 v a^* a +tve_4.
\end{align*}

The associated moduli space of 0-generated representations of dimension vector $(1,2)$ can also be explicitly calculated. A representation is defined by assigning vector spaces $V_0 \cong \mathbb{C}$ and $V_1 \cong \mathbb{C}^2$ to the vertices and maps 
\[
(a) \in \Mat(V_0,V_1), \quad (a^*) \in \Mat(V_1,V_0), \quad (\beta) \in \Mat(V_1,V_1), \quad (\gamma) \in \Mat(V_1,V_1)
\]
to the arrows such that the relations are satisfied. Two such representations are isomorphic if they lie in the same $\GL(V_1)$ orbit under base change. The 0-generated stability condition requires that the image of $V_0$ under the maps spans $V_1$. That is, we can assume that $(a)=(1,0)$ and $((\beta)_{1,2},(\gamma)_{1,2}) \neq (0,0)$. This produces two charts, $U_0$ where $(\beta)_{1,2} \neq 0$ and $U_1$ where $(\gamma)_{1,2} \neq 0$ that we now calculate

If $(\beta)_{1,2} \neq 0$ each $\GL(V_1)$ orbit contains one representative of the form 
\small
\[
(a)=\begin{pmatrix}
1,0
\end{pmatrix}, \;
(a^*)=\begin{pmatrix}
a_0^* \\
a_1^*
\end{pmatrix}, \;
(\beta)=\begin{pmatrix}
0 & 1  \\
b_{10} & b_{11} \\
\end{pmatrix}, \;
(\gamma)=\begin{pmatrix}
c_{00} & c_{01}  \\
c_{10} & c_{11} \\
\end{pmatrix}
, \;
(\delta)=\begin{pmatrix}
d_{00} & d_{01}  \\
d_{10} & d_{11} \\
\end{pmatrix}
\]
\normalsize
and after solving for the relations we find that the corresponding affine chart is
\[
U_0=\Spec \frac{\mathbb{H}_l[c_{00}, c_{10}, c_{01}, d_{10}]}{(T_0^\gamma - c_{00}^2 - c_{01} c_{10}, \, \, T_0^{\delta} - c_{00}^2 + d_{10} + c_{01} d_{10} - c_{00} t - \frac{1}{4}t^2)}
\]
which is a polynomial ring over $\mathbb{C}$ in 6 variables, and the map to $\mathcal{R}_2$ is defined by 
\begin{align*}
x' &= c_{00}( c_{10} + d_{10} +T_0^\beta) + c_{10} t , \\
 y &= -c_{01}( c_{10} + d_{10} +T_0^\beta) + c_{00} t ,\\
 z &= -(c_{10} + d_{10} + T_0^\beta).
\end{align*}
This allows the chart to be written explicitly in the form
$
U_0=\Spec \mathbb{C}[z,t,T_0^\beta,c_{00},c_{10},c_{01}]
$
where 
\begin{align*}
x'&=c_{00}z+c_{10}t, \quad y=-c_{01}z+c_{00}t, \quad T^\gamma_0=c_{00}^2+c_{01}c_{10}, \quad \text{ and} \\ 
T^\delta_0&=(1+c_{01})(z+c_{10}+T^{\beta}_0)+\frac{1}{4}t^2+c_{00}t+c_{00}^2.
\end{align*}
These can be compared with equations (44)-(48) in \cite{CurtoMorrison} defining an affine chart in the Grassman blowup, which are identical after the base changes outline in equation \ref{Equation:Change of Basis} with the additional Grassman blowup coordinates $\alpha_{1,2}$ and $\alpha_{2,2}$ corresponding to $c_{01}$ and $-c_{00}$ respectively.

Similarly, if $(\gamma)_{1,2} \neq 0$ each $\GL(V_1)$ orbit contains a representative of the form 
\small
\[
(a)=\begin{pmatrix}
1,0
\end{pmatrix}, \;
(a^*)=\begin{pmatrix}
A_0^* \\
A_1^*
\end{pmatrix}, \;
(\beta)=\begin{pmatrix}
B_{00} & B_{01}  \\
B_{10} & B_{11} \\
\end{pmatrix}, \;
(\gamma)=\begin{pmatrix}
0 & 1  \\
C_{10} & C_{11} \\
\end{pmatrix}, \;
(\delta)=\begin{pmatrix}
D_{00} & D_{01}  \\
D_{10} & D_{11} \\
\end{pmatrix}
\]
\normalsize
and after solving for the relations of the quiver, this defines the affine chart
\[
U_1=\Spec  \frac{\mathbb{H}_2[B_{00},B_{01},B_{10}, D_{10}]}{
(
T_0^\beta-B_{00}^2 - B_{01} B_{10}, \, \,
  T_0^\delta - B_{00}^2 + D_{10} + B_{01} D_{10} - B_{00} t -\frac{1}{4} t^2 )}
\]
with map to $\mathcal{R}_2$ defined by
\begin{align*}
x' &= -B_{00} (B_{10}+ D_{10} + T_0^\gamma) + B_{01} t T_0^\gamma  \\
y &= -(B_{10} + D_{10} + T_0^\gamma),   \\
z &= -B_{01}( B_{10} + D_{10}+T_0^\gamma) + B_{00} t .
\end{align*}

Transition maps between the two charts can be calculated by comparing the generators in the case both $b_{01}$ and $C_{01}$ nonzero. Using the $\GL(V_1)$ action to change the basis between the two chosen forms for the representations we find the change of basis
\begin{align*}
c_{00}&=-B_{00}/B_{01},  &  c_{01}&= 1/B_{01},  \\ 
c_{10}&=-B_{00}^2/B_{01}+B_{01}T_0^\gamma,  &  d_{10}&=-B_{00} t -B_{00}^2 + B_{00}^2/B_{01} + B_{01} D_{10}.
\end{align*}
\end{example}
\vspace{0.8cm}

A calculation similar to that in Example \ref{Example: Length 2} can calculate the algebra $\mathcal{R}_l$  and modules $\mathcal{N}_l$, and $\mathcal{N}_l^{+}$ as matrix factorisations from the universal flopping algebra $\mathcal{A}_l$ for lengths $\ge 2$. However, this relies on computer algebra packages to compute the ever larger equations and it becomes more difficult to efficiently display results so we do not do so here.

Instead, we now list explicit examples of threefold flops that can be recovered from universal flopping algebras.

\begin{example}(Laufer flop) \label{Ex:Laufer flop} This example shows how an explicit example of a simple threefold flop can be recovered from the universal flopping algebra. The following classifying map to the  universal flop of length two 
\[
 \mathcal{R}_2 \rightarrow R \cong \mathcal{R}_2/(w+t,u-y,v)
\] is considered in \cite[Section 4.2]{AspinwallMorrison}. The singularity $R:=\mathbb{C}[x,y,z,t]/(g)$ is a hypersurface defined by the equation 
\[
g:= x^2 + y^3  -tz^2-yt^3.
\]
It has a unique, isolated singular point at the origin.

We calculate the corresponding NCCR $A:=\mathcal{A}_2 \otimes_{\mathcal{R}_2} R$ from the the universal quiver, reproducing the calculation of Aspinwall and Morrison of quiver and superpotential for this particular flop of length 2. This can presented as the algebra over $\mathbb{C}[t]$ of the following quiver with relations
\begin{align*}
\begin{aligned}
\begin{tikzpicture} [bend angle=15, looseness=1]
\node (C0) at (0,0)  {$0$};
\node (C1) at (2,0)  {$4$};
\draw [->,bend left] (C0) to node[above]  {\scriptsize{$a$}} (C1);
\draw [->,bend left] (C1) to node[below]  {\scriptsize{$a^*$}} (C0);
\draw [->, looseness=24, in=52, out=128,loop] (C1) to node[above] {$\scriptstyle{\beta}$} (C1);
\draw [->, looseness=24, in=-38, out=38,loop] (C1) to node[right] {$\scriptstyle{\delta}$} (C1);
\draw [->, looseness=24, in=-128, out=-52,loop] (C1) to node[below] {$\scriptstyle{\gamma}$} (C1);
\end{tikzpicture}
\end{aligned}
& \quad \quad
\begin{aligned}
&aa^*=te_0,\qquad \beta^2= -y e_4, \\ 
&\gamma^2=t e_4,\qquad \delta^2 =(t^2/4+t+z) e_4,\\ 
&a^*a+\beta+\gamma + \delta = \frac{t}{2}e_4  
\end{aligned}\end{align*}
where
\[
y e_4= a^* a \gamma + \gamma a^*a-t \gamma \quad \text{ and } \quad z e_4= a^* a \beta + \beta a^* a - t \beta.
\]
This presentation is realised by flat base change from the universal flop of length two calculated above.

Using the linear relations we can eliminate $t e_0$, $t e_4$, $ye_4$, and $z e_4$, recalling that $t,y,z$ are central elements, to find the relations
\begin{align*}
& \gamma^2 \beta = \beta \gamma^2 , \quad \gamma^2 \delta = \delta \gamma^2,  \quad aa^*a=a \gamma^2, \quad  a^* a a^* = \gamma^2 a^*, \\
&\beta^2=  -a^* a \gamma - \gamma a^*a+ \gamma^3, \quad a^*a+\beta+\gamma + \delta - \gamma^2/2=0,  \\ 
&  \delta^2 =\gamma^4/4+\gamma^2+a^* a \beta + \beta a^* a -\gamma^2 \beta 
\end{align*}
among the remaining generators. The generator $\delta$ can also be eliminated which, after some simplification, leaves the relations
\begin{align*}
& \gamma^2 \beta = \beta \gamma^2, \quad aa^*a=a \gamma^2, \quad a^* a a^* = \gamma^2 a^*, \\
&\beta^2=  -a^* a \gamma - \gamma a^*a+ \gamma^3, \quad \text{and} \quad \gamma^3-\beta^2-\beta \gamma - \gamma \beta - \gamma a^* a- a^*a \gamma=0.
\end{align*}
These can be further simplified, the relation $\gamma^2 \beta= \beta \gamma^2$ can be eliminated, and the resulting quiver with relations can be presented as
\begin{align*}
\begin{aligned}
\begin{tikzpicture} [bend angle=15, looseness=1]
\node (C0) at (0,0)  {$0$};
\node (C1) at (2,0)  {$4$};
\draw [->,bend left] (C0) to node[above]  {\scriptsize{$a$}} (C1);
\draw [->,bend left] (C1) to node[below]  {\scriptsize{$a^*$}} (C0);
\draw [->, looseness=24, in=52, out=128,loop] (C1) to node[above] {$\scriptstyle{\beta}$} (C1);
\draw [->, looseness=24, in=-128, out=-52,loop] (C1) to node[below] {$\scriptstyle{\gamma}$} (C1);
\end{tikzpicture}
\end{aligned}
& \quad \quad
\begin{aligned}
&aa^*a =a \gamma^2  , \\
&a^* a a^* = \gamma^2 a^* , \\
&\beta^2=\gamma^3-a^* a \gamma - \gamma a^* a , \\
&\beta \gamma+ \gamma \beta=0.
\end{aligned}\end{align*}
These relations are defined by the superpotential
\[
\frac{1}{2}a a^* a a^*-a \gamma^2 a^*- \gamma \beta^2 + \frac{1}{4}\gamma^4,
\]
i.e. the 4 relations are produced by the cyclic derivatives of the superpotential. This superpotential was calculated in \cite[Equation (56)]{AspinwallMorrison}.

The contraction algebra can be calculated from $A$ by setting $e_0=0$, and from this the Gopakumar-Vafa invariants can be recovered. The contraction algebra can be presented as
\[
A_{con}:=A/Ae_0A \cong \frac{\mathbb{C} \langle \beta, \gamma \rangle}{\langle \gamma^3-\beta^2, \beta \gamma+ \gamma \beta \rangle}
\]
and it can be calculated that
\[
\dim A_{con}= 9 \quad \text{ and } \quad \dim \frac{A_{con}}{[A_{con},A_{con}]}=5
\]
See \cite[Example 1.3]{WemyssDonovan}, and so $n_1=5$ and $9=5+4n_2$ by Proposition \ref{Prop:GKV}. Hence $n_1=5$ and $n_2=1$. 

\end{example}

We now consider several examples in length $>2$. In each case we produce a NCCR corresponding to a small resolution of a Gorenstein threefold singularity contracting an irreducible rational curve and present the algebra as the path algebra of a quiver with relations.

\begin{example}(An explicit flop of length $3$) \label{example:length 3 example}
We consider the following classifying map on the base of the universal flop of length 3
\[
\mathbb{H}_3 \rightarrow H\cong \mathbb{C}[T]:= \mathbb{H}_3[T]/(t,t_1^b, t_1^c, T-t_0^b, T-t_0^c, T-t_0^d).
\]
using the change of basis
\[
x= w-1/2(z^2+T y)
\]
the generators of $R':=\mathcal{R}_3 \otimes_{\mathbb{H}_2} H$ satisfy the single nonhomogeneous equation 

\[
-x^2-T^5 + 4T^3y + T^2z^2 + \frac{1}{4}T^2y^2 + \frac{1}{2}Tz^2y + \frac{1}{4}z^4 - y^3
\]
with matrix factorisation $(x Id -C)(x Id + C)$ where $C$ is the 6 by 6 matrix 
\small
\begin{align*}  & \left( \begin{matrix}
  \frac{1}{2}(Ty - z^2) & -T^2& T^2 - y&\\
  -Ty + zy& -\frac{1}{2}(Ty +z^2)& -T^2 + Tz& \\
  -T^3 - T^2z& T^3 - Ty + zy& -\frac{1}{2}(Ty - z^2)& \\
  -T^3 + y^2& Ty& Tz + Ty& \\
  -T^2z - T^2y& T^3 + T^2z - y^2& -T^3 - Ty &\\
  -T^3z - T^2y + Tzy - z^2y& T^4 - T^2z - 2T^2y + Tzy& T^3 - y^2&
  \end{matrix} \right. \\
  & \hspace{5cm}
  \left. \begin{matrix}
 & &-y& -z& -T\\
 & & T^2& y& -z \\
 & & Tz& -T^2& y \\
 & & -\frac{1}{2}(Ty + z^2)& -T^2& -y \\
 &&T^3 - Ty + zy& \frac{1}{2}(Ty + z^2)& T^2 \\
 &&-T^3& -T^3 + Ty - zy & \frac{1}{2}(Ty + z^2)
  \end{matrix}
  \right)
\end{align*}
\normalsize
found using the methods outlined in Appendices \ref{Appendix:Algebra} and \ref{Appendix:MF}, and in particular Example \ref{Example:Length 3}. 

This defines the algebra $R':= \mathbb{C}[T,x,y,z]/(g)$ and a variety $X':=\Spec R'$. This variety is singular with an isolated singular point at the origin, alongside other isolated singular points. Taking the completion of $R'$ with respect to the maximal ideal at the origin $\mathfrak{m}=(T,x,y,w)$ produces a complete local isolated singularity $R$. By base change from the explicit presentation of $\mathcal{A}_3$, the NCCR $A'$ of $R'$ can be presented as the path algebra over $\mathbb{C}[T]$ of the following quiver with relations.
\begin{align*}
\begin{aligned}
\begin{tikzpicture} [bend angle=15, looseness=1]
\node (C0) at (0,0)  {$0$};
\node (C1) at (2,0)  {$6$};
\draw [->,bend left] (C0) to node[above]  {\scriptsize{$a$}} (C1);
\draw [->,bend left] (C1) to node[below]  {\scriptsize{$a^*$}} (C0);
\draw [->, looseness=24, in=52, out=128,loop] (C1) to node[above] {$\scriptstyle{\beta}$} (C1);
\draw [->, looseness=24, in=-38, out=38,loop] (C1) to node[right] {$\scriptstyle{\delta}$} (C1);
\draw [->, looseness=24, in=-128, out=-52,loop] (C1) to node[below] {$\scriptstyle{\gamma}$} (C1);
\end{tikzpicture}
\end{aligned}
& \quad \quad
\begin{aligned}
&aa^*= -Te_0, \\ 
& a^*a=\delta^2 - Te_6, \\ 
& \delta a^*=0, \quad  a \delta = 0,
\end{aligned}
\quad
\begin{aligned}
&\beta^3=  Te_6, \\ 
&\gamma^3= Te_6, \\ 
&\beta+\gamma + \delta = 0. 
\end{aligned}
\end{align*}
The elements $Te_0, T e_6$ and $\delta$ can be eliminated from the presentation, and this reduces the relations to the following collection
\begin{align*}
\begin{aligned}
&aa^*a=-a \beta^3, \quad 
a^*aa^*=- \beta^3 a^*, \quad
 \beta^3=\gamma^3, \\
& a^*a=(\beta+\gamma)^2 -\beta^3,  \quad
 (\beta+ \gamma) a^*=0, \quad
 a (\beta + \gamma)= 0.
\end{aligned}
\end{align*}
The relations are redundant in this collection, and after simplification the algebra $A$ can be presented as follows.
\begin{align*}
\begin{aligned}
\begin{tikzpicture} [bend angle=15, looseness=1]
\node (C0) at (0,0)  {$0$};
\node (C1) at (2,0)  {$6$};
\draw [->,bend left] (C0) to node[above]  {\scriptsize{$a$}} (C1);
\draw [->,bend left] (C1) to node[below]  {\scriptsize{$a^*$}} (C0);
\draw [->, looseness=24, in=52, out=128,loop] (C1) to node[above] {$\scriptstyle{\beta}$} (C1);
\draw [->, looseness=24, in=-128, out=-52,loop] (C1) to node[below] {$\scriptstyle{\gamma}$} (C1);
\end{tikzpicture}
\end{aligned}
& \quad \quad
\begin{aligned}
&(\beta +\gamma) a^* =0, \\
&a( \beta+\gamma) =0, \\
&a^*a=(\beta+\gamma)^2- \beta^3, \\
&a^*a=(\beta+\gamma)^2 - \gamma^3. \\
\end{aligned}
\end{align*}
These relations are defined by the superpotential
\[
\Phi :=a \beta a^* + a \gamma a^* - \beta^4 - \gamma^4 - (-\beta-\gamma)^3.
\]

The contraction algebra associated to this flop can presented as
\[
\frac{\mathbb{C}\langle \langle \beta, \gamma  \rangle \rangle}{\langle (\beta+\gamma)^2-\beta^3, (\beta+\gamma)^3-\gamma^2 \rangle  \\}.
\]
It can be calculated that (the complete version of the contraction algebra) has
\[
\dim A_{\text{con}}=27 \text{ and } \dim A_{\text{con}}/[A_{\text{con}},A_{\text{con}}] = 6
\] so the Gopakumar-Vafa invariants $n_1,n_2,n_3$ of this can satisfy  $n_1=6$ and $6+4n_2+9n_3=27 $ by Proposition \ref{Prop:GKV}, and hence $n_1=6, n_2=3,\text{ and } n_3=1.$ In particular, the example has length 3.

\end{example}

\begin{example} \label{Example:Length 3,4, and 6} We now give similar examples  produced from universal flops of length $4$ and $6$ which can be reduced to paths algebras over quivers with relations defined by a superpotential. These are CY3 noncommutative crepant resolutions. For the classifying maps $\mathbb{H}_l \rightarrow \mathbb{C}[T]$ defined by $t=0$ and, for $x \in \{ \beta, \gamma, \delta\}$, $T^{x}_i=0$ for $i>0$ and $T^{x}_0=T$ otherwise. The corresponding base changes of $\mathcal{A}_l$ can can be presented as the following paths algebras over $\mathbb{C}[T]$ with relations
\begin{align*}
\begin{aligned}
\begin{tikzpicture} [bend angle=15, looseness=1]
\node (C0) at (0,0)  {$0$};
\node (C1) at (2,0)  {$1$};
\draw [->,bend left] (C0) to node[above]  {\scriptsize{$a$}} (C1);
\draw [->,bend left] (C1) to node[below]  {\scriptsize{$a^*$}} (C0);
\draw [->, looseness=24, in=52, out=128,loop] (C1) to node[above] {$\scriptstyle{\beta}$} (C1);
\draw [->, looseness=24, in=-38, out=38,loop] (C1) to node[right] {$\scriptstyle{\delta}$} (C1);
\draw [->, looseness=24, in=-128, out=-52,loop] (C1) to node[below] {$\scriptstyle{\gamma}$} (C1);
\end{tikzpicture}
\end{aligned}
\qquad
\begin{aligned}
&aa^*=T e_0, & &  a^*a=\delta^k-T e_1,  & & \\ 
& \delta a^*=0, & & a \delta = 0, \\ 
&\beta^i= Te_1,  & & \gamma^j = T e_1\\ 
&\beta+\gamma + \delta = 0. \\ 
\end{aligned}
\end{align*}
where $(i,j,k)$ equals $(2,4,3)$, and $(2,3,5)$ for lengths 4 and 6 respectively.

After eliminating $Te_0, T e_1$ and $\delta$ the algebra can be presented as the following paths algebra over $\mathbb{C}$ of the following quiver with relations.
\begin{align*}
\begin{aligned}
\begin{tikzpicture} [bend angle=15, looseness=1]
\node (C0) at (0,0)  {$0$};
\node (C1) at (2,0)  {$1$};
\draw [->,bend left] (C0) to node[above]  {\scriptsize{$a$}} (C1);
\draw [->,bend left] (C1) to node[below]  {\scriptsize{$a^*$}} (C0);
\draw [->, looseness=24, in=52, out=128,loop] (C1) to node[above] {$\scriptstyle{\beta}$} (C1);
\draw [->, looseness=24, in=-128, out=-52,loop] (C1) to node[below] {$\scriptstyle{\gamma}$} (C1);
\end{tikzpicture}
\end{aligned}
\qquad
\begin{aligned}
& (\beta+ \gamma) a^*=0, \\
& a (\beta+ \gamma)  = 0, \\ 
&\beta^i= -a^*a+(-\beta- \gamma)^k \\ 
& \gamma^j =-a^*a+(-\beta- \gamma)^k
\end{aligned}
\end{align*}
Moreover, the relations can be encoded in the superpotential
\[
\Phi:= a \beta a^* + a \gamma a^* - \beta^{i+1} - \gamma^{j+1} - (-\beta-\gamma)^{k+1}.
\]

We note that in each of these examples there is an isolated singularity at the origin, however there are also other singularities away from the origin. Completing at the origin removes these extra singularities.
\end{example}

\begin{example} \label{Example: Length 5 explicit}
For the classifying map 
\[
\mathbb{H}_5 \rightarrow H\cong \mathbb{C}[T]:= \mathbb{H}_5[T]/(t,T^{\delta}_2, T^{\delta}_1, T_3, T_2, T-T_0^{\delta}, T-T_1, T-T_0).
\]
the corresponding base change of $\mathcal{A}_5$ produces an algebra as the path algebra over $\mathbb{C}[T]$ of the following quiver with relations.
\begin{align*}
\begin{aligned}
\begin{tikzpicture} [bend angle=15, looseness=1]
\node (C0) at (0,0)  {$0$};
\node (C1) at (2,0)  {$4$};
\draw [->,bend left] (C0) to node[above]  {\scriptsize{$a$}} (C1);
\draw [->,bend left] (C1) to node[below]  {\scriptsize{$a^*$}} (C0);
\draw [->, looseness=24, in=52, out=128,loop] (C1) to node[above] {$\scriptstyle{\beta}$} (C1);
\draw [->, looseness=24, in=-38, out=38,loop] (C1) to node[right] {$\scriptstyle{\delta}$} (C1);
\draw [->, looseness=24, in=-128, out=-52,loop] (C1) to node[below] {$\scriptstyle{\gamma}$} (C1);
\end{tikzpicture}
\end{aligned}
& \quad \quad
\begin{aligned}
&a\delta=0, \qquad \delta a^* = 0, \\
&aa^*= -T e_0, \\
&a^*a=\delta^4  -T e_4, \\
&\beta-\delta=0, \\
&\gamma \beta \gamma +\gamma^2 \beta +\gamma \beta^3=T e_4 \\
&
(\gamma+ \beta^2)^2+\beta \gamma \beta =0.
\end{aligned}
\end{align*}
Subject to the other relations, the following three relations are equivalent:
\begin{align*}
&\gamma \beta \gamma + \gamma^2 \beta + \gamma \beta^3 =T e_4, \\
&\gamma \beta \gamma - \beta^2 \gamma \beta -\beta \gamma \beta^2 - \beta^5= T e_4, \text{ and } \\
&\gamma \beta \gamma + \beta \gamma^2  +  \beta^3 \gamma =T e_4.
\end{align*}
From these relations it's easy to deduce that $T$ commutes with all other generators. Then, after eliminating $Te_0, Te_4$, and $\delta$ and simplifying the relations, this algebra can be rewritten as a path algebra over $\mathbb{C}$ of the following quiver with relations.
\begin{align*}
\begin{aligned}
\begin{tikzpicture} [bend angle=15, looseness=1]
\node (C0) at (0,0)  {$0$};
\node (C1) at (2,0)  {$4$};
\draw [->,bend left] (C0) to node[above]  {\scriptsize{$a$}} (C1);
\draw [->,bend left] (C1) to node[below]  {\scriptsize{$a^*$}} (C0);
\draw [->, looseness=24, in=52, out=128,loop] (C1) to node[above] {$\scriptstyle{\beta}$} (C1);
\draw [->, looseness=24, in=-128, out=-52,loop] (C1) to node[below] {$\scriptstyle{\gamma}$} (C1);
\end{tikzpicture}
\end{aligned}
\quad
\begin{aligned}
&a \beta =0, \qquad \beta a^* = 0, \\
& a^*a +   \gamma \beta \gamma +\gamma^2 \beta+ \beta \gamma^2 +\gamma \beta^3 +\beta^3 \gamma + \beta^2 \gamma \beta + \beta \gamma \beta^2+\beta^5=\beta^4 \\
&
\gamma^2+ \gamma \beta^2+\beta^2 \gamma +\beta \gamma \beta +\beta^4=0.
\end{aligned}
\end{align*}
These relations are determined by the superpotential 
\[
a \beta a^* - \frac{1}{5}\beta^5+ \beta^2 \gamma^2+ \frac{1}{2}\beta \gamma \beta \gamma  + \beta^4 \gamma.
\]
Again, this has an isolated singularity at the origin and other isolated singularities away from the origin.
\end{example}

\begin{appendix} \label{Appendix}
\section{Recovering hypersurfaces and matrix factorisations}

 In this appendix we give a brief outline of how a computer algebra package can be used to explicitly recover the hypersurface equation and matrix factorization description of a universal flop from the corresponding universal flopping algebra $\mathcal{A}_l$. Whilst these computations are not central to the results of this paper, they do allow the explicit calculation of the matrix factorisations conjectured by Curto and Morrison and, with some minimal adjustment, can be used to help calculate explicit examples of threefold flops.

We will display examples of code in the computation algebra system Magma \cite{Magma}. Whilst we use Magma, similar calculations could be carried out in any symbolic algebra package that is able to compute noncommutative Gr\"{o}bner basis.

In each step of the appendix, after discussing the general case, as a running example we present the code required to recover the description of the universal flop of length 2 as a matrix factorisation in the language of Curto \& Morrison as calculated in Example \ref{Example: Length 2}.

\subsection{Inputting a universal flopping algebra}
The following code inputs the algebra $\mathcal{A}_2$ we use for our running example as the quotient of a freely generated algebra in Magma.

\vspace{0.3cm}
\begin{adjustwidth}{2.5em}{0pt}
\begin{verbatim}
K := RationalField(); 
Kt<ta,tb,tc,td,z,y>:=RationalFunctionField(K,6);
F<a,A,d,c,b> := FreeAlgebra(Kt, 5);
B := [ z-a*b*A, y-a*c*A, b*b-tb, c*c-tc, d*d-td,
A*a+b+c+d-ta, a*A-2*ta  ];
G:=GroebnerBasis(B,6); 
\end{verbatim}
\end{adjustwidth}
\vspace{0.3cm}

Alternatively, the following code inputs the algebra after the change of basis (\ref{Equation:Change of Basis}) used in Example \ref{Example: Length 2}.

\vspace{0.3cm}
\begin{adjustwidth}{2.5em}{0pt}
\begin{verbatim}
K := RationalField(); 
Kt<t,u,w,v,z,y>:=RationalFunctionField(K,6);
F<a,A,d,c,b> := FreeAlgebra(Kt, 5);
B := [ z-a*b*A, y-a*c*A, b*b-u, c*c-w, d*d-(-2*v+z+y+u+w+(1/4)*t^2),
A*a+b+c+d-(1/2)*t, a*A-t  ];
G:=GroebnerBasis(B,6); 
\end{verbatim}
\end{adjustwidth}
\vspace{0.3cm}

For any length, the algebra $\mathcal{A}_l$ can be entered into Magma via similar code using the explicit descriptions calculated in this paper. 

\begin{remark}
In these examples, when encoding the algebra we do not enter the idempotent elements and define certain generators to be in a fraction field. This does not affect our later calculations and lowers the required computational power.

A partial noncommutative Gr\"{o}bner basis is calculated where the second integer coefficients determines how many iterations are computed. It is a matter of trial and error to select a large enough value for a particular example.
\end{remark}

\subsection{Calculation of $\mathcal{R}_l$.} \label{Appendix:Algebra}
We now explain how to explicitly recover the commutative algebra $R_{l}:= e_0 \mathcal{A}_l e_0$ from the explicit presentation of $\mathcal{A}_l$.

The algebra $\mathcal{R}_l$ is a graded deformation of the Kleinian singularity  $R_{\Gamma}$ over the base $\mathbb{H}_l$. As such it is a hypersurface, and, as this deformation is $\mathbb{Z}$-graded, the Hilbert polynomial of $\mathcal{R}_l$ equals the Hilbert polynomial of $\mathbb{H}_l \otimes_{\mathbb{C}} R_{\Gamma}$. From this data we deduce that $\mathcal{R}_l$ can be explicitly presented as a homogeneous hypersurface in $\mathbb{H}_l[y,z,x']$ for a choice of $y, z, x'$ that generate $e_0 \mathcal{A}_l e_0$ as a $\mathbb{H}_l$-module. 

Over the central fibre at the origin of $\mathbb{H}_l$ the algebra $\mathcal{R}_l$ is the A, D, E Kleinian singularity $R_\Gamma$. This is generated by the restriction of three elements $y, \, z$ and $x' \in e_0\mathcal{A}_l e_0$ to the central fibre. We list a choice of such generators  in $\mathcal{A}_l$ for each length $l$ along with the hypersurface equation they satisfy to define $R_{\Gamma}$ at the central fibre. 

\begin{center}
\begin{tabular}{ c |c|c |c | c } 
 Length & $x'$  & $y$ & $z$ & hypersurface \\ 
 \hline
  1 & $a_0a_1$  & $a_1^* a_0^*$ & $a_0a_0^*$ & $xy-z^{2}$ \\
  2 & $a \beta \gamma a^*$  & $a \beta a^*$ & $a \gamma a^*$ & $x^2-zy^2-z^{2}y$ \\
  3 & $a \gamma^2 \beta \gamma a^*$  & $a \gamma^2 a^*$ & $a \gamma a^*$ & $x^2-z^2x +y^3$ \\
  4 & $a \gamma^3 \beta \gamma^2 a^*$  & $a \gamma^3 a^*$ & $a \gamma a^*$ & $x^2-y^3+yz^3$ \\
  5 & $a \gamma^3 \beta \gamma^2 a^*$  & $a \gamma^3 a^*$ & $a \gamma a^*$ & $x^2-y^3-z^5$ \\
  6 & $a \gamma^2 \beta \gamma^2 \beta \gamma \beta \gamma^2 a^*$  & $a \gamma^2 \beta \gamma^2 a^*$ & $a \gamma a^*$ & $x^2-y^3+z^5$ \\
\end{tabular}
\end{center}

As $\mathcal{R}_l$ is a graded deformation of $\mathcal{R}_{\Gamma}$, the polynomial defining the hypersurface $\mathcal{R}_l$ is quadratic in $x'$ and can be presented in the form $x'^2 + x'P+Q$ for $P,Q \in \mathbb{H}_l[y,z]$. Such a presentation can be explicitly found by calculating $x'^2$ in the appropriate noncommutative Grobner basis within the algebra $\mathcal{A}_l$. After the base change $x:=x'-P/2$  such an equation can be expressed as $x^2=P^2/4+Q$.

In our continuing example running the following code, beneath the code entering the algebra above, recovers the hypersurface equation. 
\vspace{0.3cm}
\begin{adjustwidth}{2.5em}{0pt}
\begin{verbatim}
xdash:=(a*b*c*A);
P:=MonomialCoefficient(NormalForm(xdash*xdash,G),xdash);
x:=xdash-(1/2)*P;
g:=NormalForm(x*x,G);
printf"Hypersurface equation is -x^2+";
NormalForm(x*x,G); printf"\n \n";
\end{verbatim}
\end{adjustwidth}
\vspace{0.3cm}
This computation can be straightforwardly generalised for lengths $\ge 2$ by changing the algebra input and the description of the element \texttt{xdash}. 

\subsection{Calculation of a matrix factorisation} \label{Appendix:MF}
We now explain how the matrix factorisation description of the universal flop can be recovered from the universal flopping algebra.

To calculate a matrix factorisation we identify a set of generators for $\mathcal{N}_l$ as an $\mathcal{R}_l$-module, which we can find using the presentation of $\mathcal{A}_l$ and definition of $\mathcal{N}_l:=e_0 \mathcal{A}_l(1-e_0)$ as a module of paths in $\mathcal{A}_l$. We list a choice of $2l$ generators $\{ g_i \}$ for each $l$ below.

\begin{center}
\begin{tabular}{ c |c  } 
 Length & Generators    \\
 \hline
  1 &  $a_0$ and $a_1^*$.  \\
    2 & $a, \,  a\beta,\,  a \gamma,$ and $ a \beta \gamma$ \\
     3 &  $a, \, a \gamma, \,  a \gamma^2, \, a \gamma \beta, \,  a \gamma^2 \beta,$ and $a \gamma^2 \beta \gamma$\\
        4 & $a, \,  a \gamma, \, a \gamma^2, \, a \gamma^3, \, a \gamma^2 \beta, \, a \gamma^3 \beta, \, a \gamma^3 \beta \gamma ,$ and $ a \gamma^3 \beta \gamma^2 $. \\
     5 &$a, 
a \gamma, \,
a \gamma^2 ,\,
a \gamma \beta, \,
a  \gamma^3, \,
a \gamma^2 \beta, \,
a \gamma^3 \beta , \, 
$
\\
& 
$
a \gamma^3 \beta \gamma, \,
a \gamma^3 \beta^2,
$ and $
a \gamma^3 \beta \gamma^2$ \\
        6 &  $a, \,
a \gamma, \,
a \gamma^2, \,
a \gamma^2 \beta,\,
a \gamma^2 \beta \gamma, \,
a \gamma^2 \beta \gamma^2, \,
a \gamma^2 \beta \gamma \beta,  \,
a \gamma^2 \beta \gamma^2 \beta,\,$ \\
 & 
$a \gamma^2 \beta \gamma^2 \beta \gamma, \,
a \gamma^2 \beta \gamma^2 \beta \gamma \beta, \,
a \gamma^2 \beta \gamma^2 \beta \gamma \beta \gamma,$ and $
a \gamma^2 \beta \gamma^2 \beta \gamma \beta \gamma^2$
\end{tabular}
\end{center}

We note that after restricting to the central fibre these are generators of the corresponding module for the Kleinian singularity over the central fibre at the origin of $\mathbb{H}_l$.

Recalling the description of $\mathcal{R}_l$ above, the action of $x$ from the left on each of the generators ${g_i}$ can be calculated and expressed in terms of this generating set. This produces a collection of polynomials $C_{i,j} \in \mathbb{H}_l[y,z]$ such that relations of the form $x.g_i= \sum C_{i,j} g_j$ hold. This defines matrices $C=(C_{i,j})$ and 
$
M:=C- x I_{2l}
$
such that the $\mathcal{R}_l$-module $\mathcal{N}_l$ is the cokernel of $M$:
\[
\mathcal{R}_l^{\oplus 2l} \xrightarrow{M} \mathcal{R}_l^{\oplus 2l} \rightarrow \mathcal{N}_l \rightarrow 0.
\]

Then $(C - x I_{2l})(C + x I_{2l})=f_l I_{2l}$ is a matrix factorisation of the equation $f_l$ defining the corresponding hypersurface. This verifies that the presentation of the matrix factorisation suggested in Conjecture \ref{conj:CurtoMorrison2} exists.

Having input the algebra $\mathcal{A}_l$, the following code finds the matrix  $C$ in our running length 2 example.
\vspace{0.3cm}
\begin{adjustwidth}{2.5em}{0pt}
\begin{verbatim}
L:=[a, a*b, a*c, a*b*c];

printf"One half of MF is x Id + C where C:= \n {\n ";
for X in L do
printf"{ ";
for Y in L do 
g:=NormalForm(x*X,G); 
printf"%o", MonomialCoefficient(g,Y);
if Y ne L[#L] then printf", ";
end if;
end for; printf"}";
if X ne L[#L] then printf", ";
end if; printf" \n ";
end for; printf"} \n \n";
\end{verbatim}
\end{adjustwidth}
\vspace{0.3cm}
The outcome of this calculation is used in Example \ref{Example: Length 2}. 

This code can be easily amended to work for other lengths by changing \texttt{L} to the list of generators given above for the corresponding length.

\begin{example}  \label{Example:Length 3}
We consider flop of length 3 in example \ref{example:length 3 example}, where the hypersurface equation and matrix factorisation can be calculated by the following Magma code.
\vspace{0.3cm}
\begin{adjustwidth}{2.5em}{0pt}
\begin{verbatim}
K := RationalField(); 
Kt<T,z,y>:=RationalFunctionField(K,3);
F<a,A,d,b,c> := FreeAlgebra(Kt, 5);
B := [
z-a*c*A, y-a*c^2*A,b^3-T*b-T,c^3-T*c-T,
b+c+d, a*A+T, A*a-d*d+T, a*d, d*A
 ];
G:=GroebnerBasis(B,6);

xdash:=a*c^2*b*c*A;
P:=MonomialCoefficient(NormalForm(xdash*xdash,G),xdash);
x:=xdash-(1/2)*P;
g:=NormalForm(x*x,G);
printf"Hypersurface equation is -x^2+";
NormalForm(x*x,G); printf"\n \n";

L:=[a, a*c, a*c*c, a*c*b, a*c*c*b, a*c*c*b*c];

printf"One half of MF is x Id + C where C:= \n {\n ";
for X in L do
printf"{ ";
for Y in L do 
g:=NormalForm(x*X,G); 
printf"%o", MonomialCoefficient(g,Y);
if Y ne L[#L] then printf", ";
end if;
end for; printf"}";
if X ne L[#L] then printf", ";
end if; printf" \n ";
end for; printf"} \n \n";
\end{verbatim}
\end{adjustwidth}
\end{example}

\begin{remark}
It is more computationally efficient to recursively calculate the normal form of each $x \cdot g_i$ from the normal form of a previous $x \cdot g_j$ rather than calculate each from scratch. This can become more necessary for larger lengths. In addition, the polynomials involved can become very long and it can also become necessary to extract each individual coefficient $C_{i,j}$ individually.
\end{remark}

\subsection{Calculating the dimension of contraction algebras} \label{App:Dimension of contraction algebras}
For completeness we include Magma code that can calculate the dimension of contraction algebras. We also refer the reader to \cite[Appendix]{GKVWemyssBrown}, where examples of code for understanding flops and contraction algebras are given in Singular \cite{Singular} and Magma.

The following code inputs the contraction algebra considered in Example \ref{Ex:Laufer flop}, calculates its dimension, determines its indecomposable projectives with associated radicals, and displays the maps between the projectives in a grid.

\vspace{0.3cm}
\begin{adjustwidth}{2.5em}{0pt}
\begin{verbatim}
K := RationalField();
F<b,c> := FreeAlgebra(K, 2);
R := [c^3-b^2,b*c+c*b];
A<b,c>:=quo <F|R>;

printf "Algebra has dimension = %o \n \n", Dimension(A);

B, g := MatrixAlgebra(A);
M := RModule(B);
I := IndecomposableSummands(M);

for P in I do
d:=Dimension(P);
printf "Projective has dimension = %o \n", d;
J, h := JacobsonRadical(P);
printf "Radical has dimensions = %o \n", Dimension(J);
print ""; 
end for;

for P in I do
for Q in I do
printf "%o ", Dimension(AHom(P, Q));
end for; print "";
end for;
\end{verbatim}
\end{adjustwidth}
\vspace{0.3cm}
In particular, this example has dimension 9 and 1 indecomposable projective. The start of this code can be easily altered to make the same calculation for other contraction algebras.

\begin{remark}
This code calculates over $\mathbb{Q}$ rather than $\mathbb{C}$. This does not alter the dimension.
\end{remark}
\end{appendix}

\end{document}